\documentclass[11pt]{amsart}
%     
%%%%%%%%%%%%%%%%% Packages %%%%%%%%%%%%%%%%%%%%%%%%%%%%
%\usepackage{ifthen}
\usepackage{amsmath,amssymb}

\usepackage{fullpage}
\usepackage{amsfonts,amsthm,url,tabularx,array,mathtools,enumerate,verbatim,float} %R
\usepackage{tikz} 

\usepackage{thmtools}%R
\usetikzlibrary{arrows} %R
\newcommand\numberthis{\addtocounter{equation}{1}\tag{\theequation}} %R
\def\COMMENT#1{}
\def\TASK#1{}
\def\noproof{{\unskip\nobreak\hfill\penalty50\hskip2em\hbox{}\nobreak\hfill%
        $\square$\parfillskip=0pt\finalhyphendemerits=0\par}\goodbreak}
\def\endproof{\noproof\bigskip}
\newdimen\margin   % needed for macros \textdisplay & \ltextdisplay
\def\textno#1&#2\par{%
    \margin=\hsize
    \advance\margin by -4\parindent
           \setbox1=\hbox{\sl#1}%
    \ifdim\wd1 < \margin
       $$\box1\eqno#2$$%
    \else
       \bigbreak
       \hbox to \hsize{\indent$\vcenter{\advance\hsize by -3\parindent
       \sl\noindent#1}\hfil#2$}%
       \bigbreak
    \fi}
\def\proof{\removelastskip\penalty55\medskip\noindent{\bf Proof. }}

\def\eps{\varepsilon}

\def\LL{\mathcal{L}}

\DeclarePairedDelimiter\floor{\lfloor}{\rfloor} %R
\DeclarePairedDelimiter\ceil{\lceil}{\rceil}

\newtheorem{thm}{Theorem}
\newtheorem{fact}[thm]{Fact}
\newtheorem{lemma}[thm]{Lemma}
\newtheorem{prop}[thm]{Proposition}
\newtheorem{cor}[thm]{Corollary}

\newtheorem{claim}[thm]{Claim}

\declaretheoremstyle[notefont=\bfseries,notebraces={}{},%
    headpunct={},postheadspace=1em]{mystyle}
\declaretheorem[style=mystyle,numbered=no,name=Theorem]{thm-hand}

\allowdisplaybreaks

\title{On solution-free sets of integers}
\author{Robert Hancock and  Andrew Treglown}
%\thanks{The second author is supported by EPSRC grant EP/M016641/1.}
\begin{document}
\label{firstpage}
\begin{abstract} 
Given a linear equation $\LL$, a set $A \subseteq [n]$
is $\LL$-free if $A$ does not contain any `non-trivial' solutions to $\LL$. In this paper we consider  the following three general questions:
\begin{itemize}
\item[(i)] What is the size of the largest $\LL$-free subset of $[n]$?
\item[(ii)] How many $\LL$-free subsets of $[n]$ are there?
\item[(iii)] How many maximal $\LL$-free subsets of $[n]$ are there?
\end{itemize}
We completely resolve (i) in the case when $\LL$ is the equation $px+qy=z$ for fixed $p,q\in \mathbb N$ where $p\geq 2$. Further, up to a multiplicative constant, we answer (ii) for a wide class of such equations $\LL$, thereby refining a special case of a result of
Green~\cite{G-R}.
We also give various bounds on the number of maximal $\LL$-free subsets of $[n]$ for three-variable homogeneous linear equations $\LL$. For this, we make use of container and removal lemmas of Green~\cite{G-R}.
\end{abstract}
\date{\today}
\maketitle

\section{Introduction} 
Let $[n]:=\{1, \dots, n\}$ and consider a fixed linear equation
$\LL$ of the form 
\begin{align}\label{Leq}
a_1x_1+\dots +a_k x_k =b
\end{align}
 where $a_1, \dots ,a_k ,b \in \mathbb Z$. 
If $b=0$ we say that $\LL$ is \emph{homogeneous}.
If 
$$\sum _{i \in [k]} a_i=b=0$$ then we say that $\LL$ is \emph{translation-invariant}.
Let $\LL$ be translation-invariant. Then notice that $(x,\dots, x)$ is a `trivial' solution of (\ref{Leq}) for any $x$.
More generally, a solution $(x_1, \dots , x_k)$ to $\LL$ is said to be \emph{trivial} if there exists a partition $P_1, \dots ,P_\ell$ of $[k]$ so that: 
\begin{itemize}
\item[(i)] $x_i=x_j$ for every $i,j$ in the same partition class $P_r$; 
\item[(ii)] For each $r \in [\ell]$, $\sum _{i \in P_r} a_i=0$. 
\end{itemize}
A set $A \subseteq [n]$ is \emph{$\LL$-free} if $A$ does not contain any non-trivial solutions to $\LL$. If the equation $\LL$ is clear from the context, then we simply say $A$ is \emph{solution-free}.

The notion of an $\LL$-free set encapsulates many  fundamental topics in combinatorial number theory. Indeed,
in the case when $\LL$ is $x_1+x_2=x_3$ we call an $\LL$-free set a \emph{sum-free set}. This is a notion that dates back to 1916 when Schur~\cite{schur} proved that, if $n$ is sufficiently large, any $r$-colouring of  $[n]$ yields a monochromatic triple $x,y,z$ such that $x+y=z$. \emph{Sidon sets} (when $\LL$ is $x_1+x_2=x_3+x_4$) have also been extensively studied. For example, a classical result of Erd\H{o}s and Tur\'an~\cite{erdos} asserts that the largest Sidon set in $[n]$ has size $(1+o(1))\sqrt{n}$.
In the case when $\LL$ is $x_1+x_2=2x_3$ an $\LL$-free set is simply a \emph{progression-free} set. Roth's theorem~\cite{roth} states that the largest progression-free subset of $[n]$ has size $o(n)$.
In \cite{ruzsa, ruzsa2}, Ruzsa instigated the study of solution-free sets for general linear equations.

In this paper we prove a number of results concerning $\LL$-free subsets of $[n]$ where $\LL$ is a homogeneous linear equation in \emph{three variables}. In particular, our work is motivated by the following general questions:
\begin{itemize}
\item[(i)] What is the size of the largest $\LL$-free subset of $[n]$?
\item[(ii)] How many $\LL$-free subsets of $[n]$ are there?
\item[(iii)] How many \emph{maximal} $\LL$-free subsets of $[n]$ are there?
\end{itemize}
We make progress on all three of these questions. For each question we 
use tools from graph theory; for (i) and (ii) our methods are somewhat elementary. For (iii) our method is more involved and utilises container and removal lemmas of Green~\cite{G-R}.

\subsection{The size of the largest solution-free set}
As highlighted above, a central question in the study of $\LL$-free sets is to establish the size $\mu _{\LL} (n)$ of the largest $\LL$-free subset of $[n]$. It is not difficult to see that the largest sum-free  subset of $[n]$ has size $\lceil n/2 \rceil$, and this bound is attained by the set of odd numbers in $[n]$ and by the interval $[\lfloor n/2 \rfloor+1,n]$. 

When $\LL$ is $x_1+x_2=2x_3$, $\mu _{\LL} (n)=o(n)$ by Roth's theorem. %In fact, Sanders~\cite{sand} proved that there is a constant $C$ such that every set $A\subseteq [n]$ with $|A|\geq C n(\log \log n)^5 / \log n$ contains a three-term arithmetic progression. 
In fact, very recently Bloom~\cite{bloom} proved that there is a constant $C$ such that every set $A\subseteq [n]$ with $|A|\geq C n(\log \log n)^4 / \log n$ contains a three-term arithmetic progression. On the other hand, Behrend~\cite{beh} showed that there is a constant $c>0$ so that $\mu _{\LL} (n)\geq n \exp(-c\sqrt{\log n})$. See~\cite{elk, greenwolf} for the best known lower bound on $\mu _{\LL} (n)$ in this case.

More generally, it is known that $\mu _{\LL} (n)=o(n)$ if $\LL$ is translation-invariant and  $\mu _{\LL} (n)=\Omega(n)$ otherwise (see~\cite{ruzsa}). For other (exact) bounds on $\mu _{\LL }(n)$ for various linear equations $\LL$ see, for example,~\cite{ruzsa, ruzsa2, baltz, dil, hegarty}. 

In this paper we mainly focus on $\LL$-free subsets of $[n]$ for linear equations $\LL$ of the form $px+qy=z$ where $p\geq 2$ and $q \geq 1$ are fixed integers. 
Notice that for such a linear equation $\LL$, the interval $[\lfloor n/(p+q)\rfloor +1, n]$ is an $\LL$-free set. 
Our first result implies that this is the largest such $\LL$-free subset of $[n]$.
Let $\min(S)$ denote the smallest element in a finite set $S\subseteq \mathbb N$.

\begin{thm}\label{structure}
Let $\LL$ denote the equation $px+qy=z$ where $p\geq q$ and $p\geq 2$, $p,q \in \mathbb{N}$. Let $S$ be an $\LL$-free subset of $[n]$, and let $\min(S)=\floor{\frac{n}{p+q}}-t$ where $t$ is a non-negative integer. 

\begin{enumerate}[(i)]
\item{If $0 \leq t < (\frac{p+q-1}{p+q+p/q})\floor{\frac{n}{p+q}}$ then $|S| \leq \ceil{\frac{(p+q-1)n}{p+q}}-\floor{\frac{p}{q}t}$.}
\item{If $t \geq (\frac{p+q-1}{p+q+p/q})\floor{\frac{n}{p+q}}$ then $|S| \leq \frac{(q^2+1)n}{q^2+q+1}$ provided that $$n \geq \max{\Big\{\frac{3(q^2+q+1)(q^3+p(q^2+q+1))}{q^2+1}, \frac{5(q^2+q+1)(q^5+p(q^4+q^3+q^2+q+1))}{q^4+(p-1)q^3+q^2+1}\Big\}}.$$}
\end{enumerate}
\end{thm}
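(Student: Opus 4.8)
The plan rests on one elementary observation. For $p\ge 2$ the equation $px+qy=z$ has no trivial solutions (one checks directly that no partition class of the coefficients $\{p,q,-1\}$ sums to $0$), so ``$S$ is $\mathcal L$-free'' means exactly that there are no $x,y,z\in S$, not necessarily distinct, with $px+qy=z$. Write $m:=\min(S)$. Then for every $y\in S$ with $pm+qy\le n$ we have $pm+qy\in[(p+q)m,n]\setminus S$, and the map $y\mapsto pm+qy$ is injective (similarly $x\mapsto px+qm$). Since $p\ge q$ and $(p+q)m\le n$, one also has $(n-qm)/p\le(n-pm)/q$, so setting $L:=\lfloor(n-pm)/q\rfloor$, the map $y\mapsto pm+qy$ injects $S\cap[m,L]$ into $[(p+q)m,n]\setminus S$. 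This is essentially the only input to the whole argument.

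For part (i) I would split $[m,n]$ at $(p+q)m$. A short computation shows that the hypothesis $t<\bigl(\tfrac{p+q-1}{p+q+p/q}\bigr)\lfloor n/(p+q)\rfloor$ is, up to the floor, exactly the inequality $L<(p+q)m$, i.e.\ $n<(p+pq+q^2)m$. Granting this, $S\cap[m,L]$ is an initial segment of $S\cap[m,(p+q)m-1]$, so
\[
|S|=\bigl|S\cap[m,(p+q)m-1]\bigr|+\bigl|S\cap[(p+q)m,n]\bigr|\le\bigl|S\cap[m,(p+q)m-1]\bigr|+\bigl(n-(p+q)m+1-|S\cap[m,L]|\bigr),
\]
and the two $S\cap[m,\cdot]$ terms collapse to $|S\cap[L+1,(p+q)m-1]|\le(p+q)m-1-L$, giving $|S|\le n-L=n-\lfloor(n-pm)/q\rfloor$. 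Substituting $m=\lfloor n/(p+q)\rfloor-t$ and comparing with $\lceil(p+q-1)n/(p+q)\rceil-\lfloor(p/q)t\rfloor$ is then routine (in fact one gets a slightly stronger bound). The only genuine care is needed for the values of $t$ in the narrow gap just below the stated threshold, where $L\ge(p+q)m$ but only by a bounded amount, and for $t=0$, which is immediate since $(p+q)m\le n$ forces $(p+q)m\notin S$.

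For part (ii), $m$ is now small (roughly $m\lesssim n/(pq+q^2+p)$) and the split above degenerates, yielding only $|S|\le n-(p+q)m+1$. The natural approach is to iterate the forbidding down the geometric chain of intervals $(L_{j+1},L_j]$ with $L_1:=L$ and $L_{j+1}:=(L_j-pm)/q$: for each $j$ the map $y\mapsto pm+qy$ injects $S\cap(L_{j+2},L_{j+1}]$ into $(L_{j+1},L_j]\setminus S$, so $|S\cap(L_{j+1},L_j]|+|S\cap(L_{j+2},L_{j+1}]|\le L_j-L_{j+1}+1$; chaining these consecutive-pair inequalities together with the initial bound $|S\cap(L_1,n]|+|S\cap(L_2,L_1]|\le n-L_1+1$ controls $|S|$ by a convergent geometric series. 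This is a strong bound when $m$ is genuinely small but becomes lossy once $m$ is near the top of the allowed range; there one instead extends the part-(i) split slightly into the regime $L\ge(p+q)m$, where $L$ still exceeds $(p+q)m$ only by a bounded amount. Consequently part (ii) forces one to split the range of $t$ into two regimes and to verify that the single clean constant $\tfrac{q^2+1}{q^2+q+1}$ — which is not optimized — dominates in both, and this is the main obstacle. The two polynomial thresholds in the hypothesis on $n$ are exactly the bounds $m$ must beat for the first one or two intervals $(L_{j+1},L_j]$ of the chain to be ``collision-free'' in each regime (the denominators $q^3+p(q^2+q+1)$ and $q^5+p(q^4+q^3+q^2+q+1)$ appear for precisely this reason), which is what keeps the accumulated rounding errors negligible.
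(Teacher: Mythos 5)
Your key observation is exactly the engine of the paper's proof: the paper forms the auxiliary graph $G_m$ on vertex set $[m,n]$ with an edge between $c$ and $pm+qc$ whenever $pm+qc\le n$, and your injection $y\mapsto pm+qy$ of $S\cap[m,L]$ into $[(p+q)m,n]\setminus S$ is precisely the statement that an $\LL$-free set with minimum $m$ is independent in $G_m$. For part (i) your split is essentially correct and in fact gives the slightly stronger bound $|S|\le n-\lfloor(n-pm)/q\rfloor$; the ``narrow gap'' case $L\ge(p+q)m$ that you flag but do not close is genuinely possible for some $t>0$ when $(p+q)\nmid n$, but it is easy to settle: there the same injection still gives $|S|\le n-(p+q)m+1$, and $(p+q)m-1\ge\lfloor n/(p+q)\rfloor+\lfloor pt/q\rfloor$ follows directly from the hypothesis on $t$, since $(p+q)t+\lfloor pt/q\rfloor$ is an integer strictly less than $(p+q-1)\lfloor n/(p+q)\rfloor$. (The paper avoids the dichotomy entirely by exhibiting a matching of $\lfloor(p+q)t/q\rfloor+1$ disjoint edges $\{m+i,(p+q)m+qi\}$ in $G_m$.)

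Part (ii), however, is where nearly all the work lies, and your proposal does not carry it out. The geometric chain of intervals $(L_{j+1},L_j]$ is the right object --- it corresponds exactly to the path decomposition of $G_m$ derived in Fact~\ref{F1} --- but you never extract a numerical bound from the pairwise inequalities, and you concede that verifying the constant $\tfrac{q^2+1}{q^2+q+1}$ dominates in both regimes ``is the main obstacle.'' That verification is essentially the whole proof: the paper shows the maximum independent set in $G_m$ is at most $C_kn+k$ for an explicit constant $C_k$ depending on the path parameter $k$, obtained from a careful alternating telescoping sum $y_0-y_1+y_2-\cdots$ of floored interval endpoints, and then proves $C_kn+k\le\tfrac{(q^2+1)n}{q^2+q+1}$ for all $k\ge 2$ once $n$ exceeds the stated threshold. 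Your reading of the two polynomial thresholds on $n$ as collision-freeness conditions on the first couple of intervals is also not right: they arise because $C_kn+k\le\tfrac{(q^2+1)n}{q^2+q+1}$ rearranges to $n\ge k/(\tfrac{q^2+1}{q^2+q+1}-C_k)$, and the stated bounds are exactly this requirement evaluated at $k=3$ and $k=5$ (the cases $k\ge 6$ being handled by induction in Claim~\ref{claimA}, and $k\in\{2,4\}$ giving weaker requirements). So the proposal identifies the correct approach, and part (i) is recoverable with a small patch, but part (ii) is missing the computation that constitutes the bulk of the argument.
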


In both cases of Theorem~\ref{structure} we observe that $|S| \leq n-\lfloor \frac{n}{p+q} \rfloor$, hence the following corollary holds.

\begin{cor}\label{cormain}
Let $\LL$ denote the equation $px+qy=z$ where $p\geq q$ and $p\geq 2$, $p,q \in \mathbb{N}$. If $n $ is sufficiently large then $\mu _{\LL} (n) = n-\lfloor \frac{n}{p+q} \rfloor$.
\end{cor}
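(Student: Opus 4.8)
The plan is to derive Corollary~\ref{cormain} directly from Theorem~\ref{structure} together with the elementary construction noted immediately before that theorem. For the lower bound, the interval $I:=[\floor{\frac{n}{p+q}}+1,n]$ is $\LL$-free and has size $n-\floor{\frac{n}{p+q}}$, so $\mu_{\LL}(n)\geq n-\floor{\frac{n}{p+q}}$. It then remains to establish the matching upper bound: every $\LL$-free $S\subseteq[n]$ satisfies $|S|\leq n-\floor{\frac{n}{p+q}}$.

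First I would handle the case that falls outside the hypotheses of Theorem~\ref{structure}, namely $\min(S)>\floor{\frac{n}{p+q}}$: here $S\subseteq I$, so $|S|\leq n-\floor{\frac{n}{p+q}}$ trivially. Otherwise we may write $\min(S)=\floor{\frac{n}{p+q}}-t$ with $t$ a non-negative integer and invoke Theorem~\ref{structure}. In case (i) it gives $|S|\leq\ceil{\frac{(p+q-1)n}{p+q}}-\floor{\frac pq t}$, and since $\ceil{\frac{(p+q-1)n}{p+q}}=\ceil{n-\frac{n}{p+q}}=n-\floor{\frac{n}{p+q}}$ while $\floor{\frac pq t}\geq 0$ (using $p\geq q$ and $t\geq 0$), this is at most $n-\floor{\frac{n}{p+q}}$, as wanted.

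The only place requiring a genuine (if short) computation is case (ii), where Theorem~\ref{structure} yields merely $|S|\leq\frac{(q^2+1)n}{q^2+q+1}$. Here I would bound $n-\floor{\frac{n}{p+q}}\geq n-\frac{n}{p+q}=\frac{(p+q-1)n}{p+q}$ and verify the inequality of fractions $\frac{q^2+1}{q^2+q+1}\leq\frac{p+q-1}{p+q}$; clearing denominators, this reduces to $q(p-1)\geq 1$, which holds since $p\geq 2$ and $q\geq 1$. Consequently $|S|\leq\frac{(q^2+1)n}{q^2+q+1}\leq\frac{(p+q-1)n}{p+q}\leq n-\floor{\frac{n}{p+q}}$. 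This case requires $n$ to meet the explicit lower bound appearing in Theorem~\ref{structure}(ii), which is exactly why the corollary is stated only for $n$ sufficiently large.

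Combining the three cases gives $|S|\leq n-\floor{\frac{n}{p+q}}$ for every $\LL$-free $S\subseteq[n]$, hence $\mu_{\LL}(n)\leq n-\floor{\frac{n}{p+q}}$, which together with the construction yields $\mu_{\LL}(n)=n-\floor{\frac{n}{p+q}}$. I do not expect any real obstacle beyond Theorem~\ref{structure} itself; the only two subtleties are remembering to treat the $\min(S)>\floor{\frac{n}{p+q}}$ case separately, since it lies outside the theorem's hypothesis, and checking the elementary fraction inequality underlying case (ii).
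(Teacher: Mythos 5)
Your proof is correct and follows exactly the same route as the paper: the lower bound from the interval $[\floor{n/(p+q)}+1,n]$, and the upper bound by checking that both cases of Theorem~\ref{structure} give $|S|\leq n-\floor{n/(p+q)}$. You simply spell out the short arithmetic (the identity $\ceil{\frac{(p+q-1)n}{p+q}}=n-\floor{\frac{n}{p+q}}$ in case (i) and the fraction comparison $\frac{q^2+1}{q^2+q+1}\leq\frac{p+q-1}{p+q}$ in case (ii)) that the paper leaves implicit, and you correctly note the trivial case $\min(S)>\floor{n/(p+q)}$ falls outside the theorem's stated hypothesis.
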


Roughly, Theorem~\ref{structure} implies that every $\LL$-free subset of $[n]$ is  `interval like' or `small'.
In the case of sum-free subsets (i.e. when $p=q=1$), a result of Deshouillers, Freiman, S{\'o}s and Temkin~\cite{DFST} provides very precise structural information on the sum-free subsets of $[n]$.
Loosely speaking, they showed that a sum-free subset of $[n]$ is `interval like', `small' or consists entirely of odd numbers.

In the case when $p=q$, Corollary~\ref{cormain} was proven by Hegarty~\cite{hegarty} (without  a lower bound on $n$).

\subsection{The number of solution-free sets}
Write $f (n, \LL)$ for the number of $\LL$-free subsets of $[n]$. In the case when $\mathcal  L$ is $x+y=z$,  define $f (n):=f (n, \LL)$.

By considering all possible subsets of $[n]$ consisting of odd numbers, one observes that there are at least $2^{n/2}$ sum-free subsets of $[n]$. Cameron and Erd\H{o}s~\cite{cam1} conjectured that in fact $f(n)=\Theta (2^{n/2})$. This conjecture was proven independently by Green~\cite{G-CE} and Sapozhenko~\cite{sap}. In fact, they showed that there are constants $C_1$ and $C_2$ such that  $f(n)=(C_i+o(1))2^{n/2}$
for all $n \equiv i \mod 2$.

Results from~\cite{sam, STnew} imply that there are between $2^{(1.16+o(1))\sqrt{n}} $ and $2^{(6.45+o(1))\sqrt{n}} $ Sidon sets in $[n]$. There are also several results concerning the number of so-called $(k,\ell)$-sum-free subsets of $[n]$ (see, e.g.,~\cite{bilu, calk, show}).

More generally, given a linear equation $\LL$, there are at least $2^{\mu _{\LL} (n)}$ $\LL$-free subsets of $[n]$. In light of the situation for sum-free sets one may ask whether, in general, $f(n, \LL)=\Theta(2^{\mu _{\LL} (n)})$. However, Cameron and Erd\H{o}s~\cite{cam1} observed that this is false for translation-invariant $\LL$. In particular, given such an $\LL$-free set, any translation of it is also $\LL$-free.

Green~\cite{G-R} though showed that given a homogeneous
 linear equation $\LL$, $f(n, \LL)=2^{\mu _{\LL} (n)+o(n)}$  (where here the $o(n)$ may depend on $\LL$). 
Our next result implies that one can omit the term $o(n)$ in the exponent for  certain types of linear equation $\LL$.
\begin{thm}\label{number}
Fix $p,q \in \mathbb N$ where    (i) $q\geq 2$ and $p>q(3q-2)/(2q-2)$ or (ii) $q=1$ and $p\geq 3$. Let $\LL$ denote the equation $px+qy=z$. Then
$$f(n, \LL)=\Theta(2^{\mu _{\LL} (n)}).$$
\end{thm}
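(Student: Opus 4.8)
The lower bound is immediate: there are at least $2^{\mu_{\LL}(n)}$ $\LL$-free subsets of $[n]$, namely all subsets of a fixed largest $\LL$-free set, and by Corollary~\ref{cormain} we have $\mu_{\LL}(n) = n - \floor{n/(p+q)}$ for large $n$. So the whole content is the matching upper bound $f(n,\LL) = O(2^{\mu_{\LL}(n)})$. The natural strategy is to classify $\LL$-free sets according to their smallest element, exactly as in Theorem~\ref{structure}, and sum the number of such sets over the two regimes. Write $m := \floor{n/(p+q)}$ and suppose $S$ is $\LL$-free with $\min(S) = m - t$.

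In the ``large $t$'' regime (case (ii) of Theorem~\ref{structure}) we have $|S| \le \frac{(q^2+1)n}{q^2+q+1}$, which is $(1-\delta)n$ for some fixed $\delta = \delta(p,q) > 0$. The number of subsets of $[n]$ of size at most $(1-\delta)n$ is $2^{(1-\delta')n}$ for some $\delta' > 0$ (a routine binomial-coefficient/entropy estimate), so the total contribution from this regime is at most $n \cdot 2^{(1-\delta')n} = o(2^{\mu_{\LL}(n)})$, since $\mu_{\LL}(n) \ge (1 - \frac{1}{p+q} - o(1))n$ and $\delta'$ can be taken larger than $\frac{1}{p+q}$ — here I would want to verify the numerology, choosing the hypotheses on $p,q$ to make this gap genuinely positive, which is presumably exactly why the condition $p > q(3q-2)/(2q-2)$ (resp.\ $p\ge 3$) appears. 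This is the first place the precise arithmetic assumptions get used.

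In the ``interval-like'' regime (case (i) of Theorem~\ref{structure}) we have $\min(S) = m - t$ with $0 \le t < c\, m$ and $|S| \le \ceil{\frac{(p+q-1)n}{p+q}} - \floor{\frac{p}{q} t}$. The idea is that every such $S$ is contained in $[m-t, n]$ minus at least roughly $\frac{p}{q}t$ further forbidden elements, and one counts: for a fixed value of $t$, the number of $\LL$-free sets $S$ with $\min(S) = m-t$ is at most $\binom{n - m + t}{\le \,n - m - \floor{(p/q)t} + O(1)}$, i.e.\ at most a constant times $2^{\,n - m - \floor{(p/q)t}}\cdot\binom{n-m+t}{\le\,(1+\text{stuff})t}$-type bound. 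The crucial point is that $p/q > 1$, so that as $t$ grows the exponent $n - m - \floor{(p/q)t}$ decreases faster than the extra ``room'' $t$ we give ourselves on the left endpoint; summing the resulting geometric-type series over $t \ge 0$ yields a bounded multiple of $2^{n-m} = 2^{\mu_{\LL}(n)}$. Again the inequality $p/q > 1$ (in fact something a little stronger, to dominate the binomial factor $\binom{n-m+t}{\sim t}$ which is $2^{O(t)}$) is where one needs $p \ge q$ plus the extra slack.

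The main obstacle is this last summation: one must set it up so that the entropy cost of allowing the minimum to drop by $t$ — which includes both the $2^t$-ish choices for which elements of $[m-t, m]$ lie in $S$ and the binomial correction from only knowing $|S|$ up to the bound in Theorem~\ref{structure}(i) — is strictly outweighed by the guaranteed loss of $\floor{(p/q)t}$ elements forced by $\LL$-freeness, uniformly in $t$ up to the threshold $cm$. I expect to prove a clean lemma of the form ``for each $t$, $\#\{S : S \text{ is } \LL\text{-free}, \min(S) = m-t\} \le 2^{\mu_{\LL}(n) - \alpha t + \beta}$'' for constants $\alpha > 0$, $\beta$, combine this with the case~(ii) bound, and sum. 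The somewhat delicate bookkeeping of floors/ceilings in Theorem~\ref{structure} and the need to make the constants in the exponent work out for all admissible $(p,q)$ is the part requiring care, but conceptually it is just a weighted count over the structural dichotomy already established.
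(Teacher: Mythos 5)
Your lower bound and the general strategy of stratifying $\LL$-free sets by their minimum element $m=\floor{n/(p+q)}-t$ match the paper's approach (Section~\ref{NSFS}). The issue is the tool you use for a fixed $t$: you propose to deduce the count from the \emph{cardinality} bound $|S|\le \ceil{\frac{(p+q-1)n}{p+q}}-\floor{\frac{p}{q}t}$ of Theorem~\ref{structure}(i) via a binomial/entropy estimate, and that is not strong enough.

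Concretely, for small $t$ the universe $[m-t,n]$ has size $N:=n-m+t+1$ and the size cap is $k:=n-m-\floor{(p/q)t}$, so the ``shortfall'' $N-k$ is only about $(p+q)t/q$, a quantity that is tiny compared to $N$. In that regime $\sum_{j\le k}\binom{N}{j}$ is essentially $2^{N}=2^{\,n-m+t+1}$; there is no entropy saving at all. Already at $t=1$ your bound is $\approx 2^{\mu_\LL(n)+1}$, at $t=2$ it is $\approx 2^{\mu_\LL(n)+2}$, and so on, so the sum over $t$ blows up rather than forming a geometric series. The binomial bound only bites when the size cap is bounded away from $N/2$, i.e.\ in the ``large $t$'' regime, not uniformly. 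So the clean lemma you hope for, $\#\{S:\min(S)=m-t\}\le 2^{\mu_\LL(n)-\alpha t+\beta}$, cannot be extracted from Theorem~\ref{structure}(i) alone.

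What the paper uses instead is the \emph{structure} behind that cardinality bound, not just its numerical conclusion. Fact~\ref{F2}(ii) says the number of $\LL$-free sets with minimum $m$ is at most the number of independent sets in the auxiliary graph $G_m$ that contain $m$, and $G_m$ is a disjoint union of paths. Lemma~\ref{NSF2}(iii) then identifies a matching in $G_m$ of size roughly $(p+q)t/q$ and applies Lemma~\ref{ISL2}: each matched pair contributes a factor $3$ rather than $4$ to the independent-set count, yielding a multiplicative saving of $(3/4)^{(p+q)t/q}\approx 2^{-2(p+q)t/(5q)}$. Comparing this against the $2^{t}$ cost of extending the interval is exactly where the hypothesis $p>q(3q-2)/(2q-2)$ (resp.\ $p\ge 3$ when $q=1$) enters, and the ``large $t$''/``path parameter $\ge 2$'' regime is handled separately (Lemma~\ref{NSF2}(iv)--(v)) using Fibonacci-number bounds on independent sets in paths, which again exploit the path structure rather than a pure size bound. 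In short: you have located the right dichotomy and the right quantity to sum over, but the saving of $\floor{(p/q)t}$ \emph{elements} does not translate into a saving of $\floor{(p/q)t}$ in the \emph{exponent} unless you account for the forbidden-pair (matching) structure that produces it; this is the missing ingredient.
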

%\begin{thm}\label{number}
%Fix $p,q \in \mathbb N$ where either (i) $p\geq 3$ and $q=1$ or (ii)  $p>3q/2\geq 3$. Let $\LL$ denote the equation $px+qy=z$. Then
%$$f(n, \LL)=\Theta(2^{\mu _{\LL} (n)}).$$
%\end{thm}

\subsection{The number of maximal solution-free sets}
Given a linear equation $\LL$,
we say that $S\subseteq [n]$ is a \emph{maximal $\LL$-free subset of $[n]$} if it is $\LL$-free and it is not properly contained in another $\LL$-free subset of $[n]$.
Write $f_{\max} (n, \LL)$ for the number of maximal $\LL$-free subsets of $[n]$. In the case when $\mathcal  L$ is $x+y=z$,  define $f_{\max} (n):=f_{\max} (n, \LL)$.

A significant proportion of the sum-free subsets of $[n]$ lie in just two maximal sum-free sets, namely the set of odd numbers in $[n]$ and the interval $[\lfloor n/2 \rfloor+1,n]$. 
This led Cameron and Erd\H{o}s~\cite{CE} to ask whether $f_{\max} (n) = o(f(n))$ or even $f_{\max} (n) \leq f(n)/ 2^{\eps n}$ for some constant $\eps >0$. \L{u}czak and Schoen~\cite{ls} answered this question in the affirmative, showing that $f_{\max} (n) \leq 2^{n/2-2^{-28}n}$ for sufficiently large $n$. Later, Wolfovitz~\cite{wolf} proved that $f_{\max} (n) \leq 2^{3n/8+o(n)}$. Very recently, Balogh, Liu, Sharifzadeh and Treglown~\cite{BLST, BLST2} proved the following: For each $1\leq i \leq 4$, there is a constant $C_i$ such that, given any $n\equiv i \mod 4$,
$f_{\max }(n)=(C_i+o(1)) 2^{n/4}$.

Except for sum-free sets, the problem of determining the number of maximal solution-free subsets of $[n]$ remains wide open. In this paper we give a number of bounds on $f_{\max} (n, \LL)$ for 
homogeneous linear equations $\LL$ in three variables. The next result gives a general upper bound for such $\LL$. Given a three-variable linear equation $\LL$, an \emph{$\LL$-triple} is a multiset $\{x,y,z\}$ which forms a solution to $\LL$. 
Let $\mu _{\LL }^* (n)$ denote the number of elements $x \in [n]$ that do not lie in \emph{any} $\LL$-triple in $[n]$.
\begin{thm}\label{max1}
Let $\LL$ be a fixed homogenous three-variable linear equation. Then $$f_{\max}(n,\LL) \leq 3^{(\mu_{\LL}(n)-\mu_{\LL}^*(n))/3+o(n)}.$$
\end{thm}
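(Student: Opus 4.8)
The plan is to use the graph container method applied to an auxiliary hypergraph (or graph) encoding $\LL$-triples, exactly in the spirit of how one counts maximal independent sets. Write $X \subseteq [n]$ for the set of elements lying in \emph{no} $\LL$-triple, so $|X| = \mu_{\LL}^*(n)$; every element of $X$ lies in every maximal $\LL$-free set, so these contribute nothing to the count and we may restrict attention to $[n]\setminus X$. On $[n]\setminus X$ define a $3$-uniform (multi-)hypergraph $H$ whose edges are the $\LL$-triples; then the $\LL$-free subsets of $[n]\setminus X$ are precisely the independent sets of $H$, and maximal $\LL$-free subsets of $[n]$ correspond to maximal independent sets of $H$ together with all of $X$. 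The key input is a hypergraph container lemma of the type established by Green~\cite{G-R}: there is a family $\mathcal{C}$ of containers, of size $\mathcal{C} = 2^{o(n)}$, such that every independent set of $H$ lies inside some $C \in \mathcal{C}$, and each $C$ is itself ``almost $\LL$-free'' in the sense that it spans only $o(n^2)$ edges of $H$ (equivalently, only $o(n)$ $\LL$-triples after applying a removal-type estimate). Since $H$ has maximum degree $\Theta(n)$, we also know $|C| \le \mu_{\LL}(n) + o(n)$ for each container — this is where the bound $\mu_{\LL}(n)$ enters.

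Next, I would bound, for a fixed container $C$, the number of maximal independent sets of $H$ contained in $C$. The standard trick: since $C$ contains only $o(n)$ $\LL$-triples, we can delete a set $Z\subseteq C$ with $|Z| = o(n)$ so that $C \setminus Z$ is genuinely $\LL$-free; a maximal $\LL$-free set $S$ with $S\subseteq C$ is determined by $S\cap Z$ (at most $2^{o(n)}$ choices) together with, for the remaining ground set, the obstruction that each element $w\in (C\setminus Z)\setminus S$ must be ``blocked'' — it must lie in an $\LL$-triple with two elements of $S$. Rather than this, the cleaner route giving the exponent base $3$ is a direct greedy/entropy argument on $\LL$-triples: process $C$ and repeatedly pick an $\LL$-triple $\{x,y,z\}$ with at least two vertices still ``undecided''; for a maximal $\LL$-free $S$, at least one of these two undecided vertices is not in $S$, giving essentially $3$ choices (which of the three is excluded, or the triple is already resolved) per step, and each step decides at least one new vertex. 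Because we only need to cover the $\mu_{\LL}(n)-\mu_{\LL}^*(n)$ potentially-present vertices and each round fixes the status of a constant fraction via the maximum-degree bound, the number of maximal $\LL$-free sets inside $C$ is at most $3^{(\mu_{\LL}(n)-\mu_{\LL}^*(n))/3 + o(n)}$. Multiplying by $|\mathcal{C}| = 2^{o(n)} = 3^{o(n)}$ absorbs the container count into the error term and yields the stated bound.

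The main obstacle, and the step needing the most care, is making the ``$/3$'' in the exponent tight — i.e., showing each elementary step in the greedy argument genuinely fixes (on average) three vertices' membership status while offering only three branching options, so that the number of configurations on a container of size $m$ is $3^{m/3 + o(m)}$ rather than, say, $2^{m/2+o(m)}$ or $3^{m+o(m)}$. This requires choosing the $\LL$-triples to process so that they are (nearly) vertex-disjoint on the undecided vertices, which one extracts from the maximality condition together with the fact that every vertex of $C\setminus X$ lies in \emph{some} $\LL$-triple; here the $o(n)$-many ``bad'' $\LL$-triples inside each container (from the removal lemma) and the leftover vertices that fail to be covered by the disjoint family are exactly what get swept into the $o(n)$ in the exponent. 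A secondary technical point is handling the multiset/degenerate $\LL$-triples (those with a repeated coordinate, which exist when $\LL$ is translation-invariant or when coefficients coincide) — these form a sub-linear-degree subgraph and can be treated separately without affecting the leading term.
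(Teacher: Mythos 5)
Your high-level skeleton matches the paper: apply Green's container lemma (Lemma~\ref{L1}) to get $2^{o(n)}$ containers, use the removal lemma (Lemma~\ref{L2}) to split each container $F$ into a small ``bad'' part $A$ of size $o(n)$ and an $\LL$-free part $B$ of size at most $\mu_{\LL}(n)$, absorb $\mathcal{M}_{\LL}(n)$ into $B$, and count maximal $\LL$-free subsets inside $F$ by first choosing the intersection with $A$ (giving the $2^{o(n)}$ factor). All of that is exactly what the paper does and is fine.

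The genuine gap is the key quantitative step: how to show that, having fixed the $\LL$-free set $S\subseteq A$, the number of maximal $\LL$-free extensions of $S$ into $B$ is at most $3^{(|B|-\mu^*_{\LL}(n))/3}$. Your proposed ``direct greedy/entropy argument on $\LL$-triples'' does not deliver this. First, the branching claim is false: if you pick an $\LL$-triple $\{x,y,z\}$ with $x,y$ undecided, it is \emph{not} true that at least one of $x,y$ must lie outside $S$ (that only follows if $z$ is already forced into $S$), so the ``3 choices per triple'' bookkeeping breaks down. Second, even if each step did offer exactly $3$ branches, you state that each step ``decides at least one new vertex,'' which yields only $3^{m}$, not $3^{m/3}$; to get $3^{m/3}$ you would need each step to irrevocably fix \emph{three} vertices on average, which is precisely the content of the Moon--Moser theorem and is not something a naive greedy pass can reproduce without the structural analysis that goes into that theorem. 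In other words, you are implicitly trying to reprove Moon--Moser from scratch inside a $3$-uniform hypergraph, where the analogous $3^{n/3}$ bound on maximal independent sets does not even hold in general.

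The paper avoids all of this by converting the problem into an ordinary graph question: for fixed $S$, it forms the \emph{link graph} $L_S[B]$ (vertices $B$; $x\sim y$ iff some $z\in S$ makes $\{x,y,z\}$ an $\LL$-triple, with loops for degenerate cases), notes that maximal extensions of $S$ inject into maximal independent sets of $L_S[B]$ (Lemma~\ref{L4}), observes $\mathcal{M}_{\LL}(n)$ is isolated in $L_S[B]$, and then applies the Moon--Moser bound $\mathrm{MIS}(G)\le 3^{|V(G)|/3}$ (Theorem~\ref{L5}(i)) as a black box to $L_S[B]\setminus\mathcal{M}_{\LL}(n)$. That reduction to a graph, rather than working directly with the $\LL$-triple hypergraph, is the missing idea; without it, the $/3$ in the exponent is not justified by your argument.
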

Theorem~\ref{max1} together with the aforementioned result of Green shows that $f_{\max}(n,\LL)$ is significantly smaller than $f(n,\LL)$ for all homogeneous three-variable linear equations $\LL$ that are not translation-invariant. So in this sense it can be viewed
as a generalisation of the result of \L{u}czak and Schoen.
The proof of Theorem~\ref{max1} is a simple application of  container and removal lemmas of Green~\cite{G-R}. The same idea was used to prove results in~\cite{sar, BLST, BLST2}.
Although at first sight the bound in Theorem~\ref{max1} may seem crude, perhaps surprisingly there are equations $\LL$ where the value of $f_{\max}(n,\LL)$ is close to this bound (see Proposition~\ref{MSF2} in Section~\ref{secmax}).

On the other hand, the following result shows that there are linear equations where the bound in Theorem~\ref{max1} is far from tight. 
\begin{thm}\label{max2}
Let $\LL$ denote the equation $px+qy=z$ where $p \geq q \geq 2$ are integers so that $p \leq q^2-q$ and $\gcd (p,q)=q$.  Then $$f_{\max}(n,\LL) \leq 2^{(\mu_{\LL}(n)-\mu_{\LL}^*(n))/2+o(n)}.$$
\end{thm}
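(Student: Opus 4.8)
The plan is to follow the container-plus-removal strategy that underlies Theorem~\ref{max1}, but to exploit the extra divisibility hypothesis $\gcd(p,q)=q$ (equivalently $q \mid p$) to replace the factor $3^{1/3}$ by $2^{1/2}$. Write $p=q p'$ with $p'\geq 1$, so $\LL$ becomes $q(p'x+y)=z$, i.e.\ $q \mid z$ for every $\LL$-triple $\{x,y,z\}$ with $z$ the ``large'' variable. First I would invoke Green's container lemma for $\LL$: there is a family of at most $2^{o(n)}$ container sets $C\subseteq[n]$, each of which is ``almost $\LL$-free'' (it spans at most $\delta n^2$ $\LL$-triples for a small $\delta$), such that every $\LL$-free subset of $[n]$ lies inside some container. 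By Green's removal lemma each such $C$ can be made genuinely $\LL$-free by deleting $o(n)$ elements, so $|C|\leq \mu_\LL(n)+o(n)$. Hence it suffices to show that, for a fixed container $C$, the number of maximal $\LL$-free subsets of $[n]$ contained in $C$ is at most $2^{(|C|-\mu_\LL^*(n))/2+o(n)}$, and then sum over the $2^{o(n)}$ containers.

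The key combinatorial step is a bound on the number of maximal independent sets in the ``link structure'' inside a fixed container. Consider the graph (or $3$-uniform multi-hypergraph) $G_C$ whose vertex set is $C$ minus the $\mu_\LL^*(n)$ elements that lie in no $\LL$-triple at all, with $\LL$-triples of $C$ as edges. A maximal $\LL$-free $S\subseteq C$ is a maximal independent set in $G_C$. The standard bound (Moon--Moser / the trivial ``each edge kills one of three vertices'' argument) gives $3^{|V(G_C)|/3+o(n)}$, which is exactly Theorem~\ref{max1}. To improve the base to $2^{1/2}$ I would show that almost every vertex of $G_C$ lies in an $\LL$-triple whose \emph{other two elements are equal}, or more usefully, that the relevant triples can be organised into \emph{pairs} rather than triples: because $q\mid p$, for each $z\in C$ divisible by $q$ one has the ``diagonal-type'' solutions $p'z/q \cdot$(something) — more precisely the condition $q\mid z$ means the set of potential small-coordinate values forced by $z$ is constrained, so that the triples through a typical vertex of $G_C$ behave like the edges of a graph rather than a $3$-graph, and the number of maximal independent sets in an $n$-vertex graph with no isolated vertices is at most $2^{n/2}$ (again Moon--Moser, in the graph case). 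Concretely, I would partition the non-isolated vertices of $G_C$ into blocks indexed by residues mod $q$, observe that a triple $px+qy=z$ forces a linear relation between two of the three coordinates once the third is fixed, and use $q\geq 2$ together with $p\leq q^2-q$ to guarantee that enough vertices sit in $\LL$-triples that pin down a second vertex uniquely, giving a ``matching-like'' auxiliary graph on $\geq |V(G_C)|-o(n)$ vertices. Counting maximal independent sets in that auxiliary graph yields the $2^{(|V(G_C)|)/2}=2^{(|C|-\mu_\LL^*(n))/2}$ bound, up to the $2^{o(n)}$ error.

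Putting the pieces together: the number of maximal $\LL$-free subsets of $[n]$ is at most $\sum_{C} 2^{(|C|-\mu_\LL^*(n))/2+o(n)} \leq 2^{o(n)} \cdot 2^{(\mu_\LL(n)-\mu_\LL^*(n))/2+o(n)} = 2^{(\mu_\LL(n)-\mu_\LL^*(n))/2+o(n)}$, using $|C|\leq\mu_\LL(n)+o(n)$ and that there are only $2^{o(n)}$ containers.

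The main obstacle I anticipate is the second step: making precise the claim that the $\LL$-triples inside a container can be ``paired up'' so that the effective count is $2^{1/2}$ rather than $3^{1/3}$ per vertex. The hypotheses $q\geq 2$, $p\leq q^2-q$, and $q\mid p$ are presumably exactly what is needed to ensure that for all but $o(n)$ elements $x$ of a container there is an $\LL$-triple of $C$ in which $x$ appears together with a \emph{unique} partner vertex (the third coordinate being determined, or lying outside $C$ in all but one way), so that removing $x$ from a maximal set forces a specific companion to be present — this is what collapses the branching factor. Verifying this uniqueness, and checking that the exceptional set is genuinely $o(n)$ (so that it can be absorbed into the error term rather than swelling the exponent), will require a careful case analysis using the inequality $p\leq q^2-q$; I would expect this to be the technical heart of the argument, with everything else being a routine deployment of Green's lemmas and the Moon--Moser bound.
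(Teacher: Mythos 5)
Your high-level container-plus-removal framing is the same as the paper's, but the heart of the argument — why the branching factor drops from $3^{1/3}$ to $2^{1/2}$ — is both missing and, where you do commit to a mechanism, actually wrong. You assert that ``the number of maximal independent sets in an $n$-vertex graph with no isolated vertices is at most $2^{n/2}$ (again Moon--Moser, in the graph case).'' That is false: Moon--Moser gives $3^{n/3}$ for arbitrary graphs, and a disjoint union of triangles has no isolated vertices yet still realises $3^{n/3}$ maximal independent sets. The $2^{n/2}$ bound holds only for \emph{triangle-free} graphs, and is a separate theorem of Hujter and Tuza (Theorem~\ref{L5}(ii)), not Moon--Moser. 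The whole point of the hypothesis $p\leq q^2-q$ (equivalently $q^2\geq p+q$) is Lemma~\ref{L3}: if $A\subseteq[1,u]$ and $B\subseteq[u+1,n]$, then the link graph $L_A[B]$ is triangle-free. Your vague ``pairing up'' / ``matching-like'' heuristic does not land on this; you need to actually prove the link graph has no triangles, which is an elementary but nontrivial calculation with the $\LL$-triples.

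There is a second structural gap. Even granting triangle-freeness, the argument does not apply directly to a whole container $C$: to use Lemma~\ref{L3} you need the link graph to be built from a set $B$ lying entirely in an upper interval, with the ``link source'' $S$ lying below it. The paper gets this from Theorem~\ref{structure}, which says that the $\LL$-free core $B$ of a container either has $\min(B)$ close to $\lfloor n/(p+q)\rfloor$ (the ``interval-like'' case, where one splits $F=X\cup Y$ with $Y\subseteq[\lfloor n/(p+q)\rfloor+1,n]$ and bounds $N(S,Y)$ via Hujter--Tuza), or $|B|$ is already small, at most $(q^2+1)n/(q^2+q+1)$ (in which case the crude $3^{|B'|/3}$ Moon--Moser bound already beats $2^{(\mu_\LL(n)-\mu^*_\LL(n))/2}$). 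Your proposal never invokes any structure theorem, so you have no control over where the container sits. Finally, the hypothesis $\gcd(p,q)=q$ enters not through any ``block by residues mod $q$'' decomposition as you suggest, but simply to identify $\mathcal{M}_\LL(n)=\{s: s>\lfloor(n-p)/q\rfloor,\ q\nmid s\}$ exactly, which gives $\mu^*_\LL(n)=(q-1)^2n/q^2+o(n)$; this explicit value is needed to check the arithmetic inequality that closes Case~2. Without both Lemma~\ref{L3} (plus Hujter--Tuza) and the case split from Theorem~\ref{structure}, the proof does not go through.
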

In the case when $\LL$ is the equation $2x+2y=z$ we provide a matching lower bound. 
 Again though, we suspect there are equations $\LL$ where the bound in Theorem~\ref{max2} is  far from tight. 
The proof of Theorem~\ref{max2} applies Theorem~\ref{structure} as well as the container and removal lemmas of Green~\cite{G-R}.

We also provide another upper bound on $f_{\max}(n,\LL)$ for a more general class of linear equations. 
\begin{thm}\label{max3}
Let $\LL$ denote the equation $px+qy=z$ where $p\geq q$, $p\geq 2$ and $p,q \in \mathbb{N}$. Then $$f_{\max}(n,\LL) \leq 2^{\mu_{\LL}(\floor{\frac{n-p}{q}})+o(n)}.$$ 
Further, if $q \geq 2$ and $p>q(3q-2)/(2q-2)$ or $q=1$ and $p\geq 3$ then
$$f_{\max}(n,\LL) =O( 2^{\mu_{\LL}(\floor{\frac{n-p}{q}})}).$$ 
\end{thm}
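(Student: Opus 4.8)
The plan is to show that a maximal $\LL$-free subset $S$ of $[n]$ is completely determined by its restriction to the initial interval $[m]$, where $m := \floor{\frac{n-p}{q}}$, and then to bound the number of possibilities for $S \cap [m]$ by invoking Green's theorem (for the first assertion) and Theorem~\ref{number} (for the second).

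First I would record the elementary observation that if $n \geq p+q$, then every $z \in (m,n]$ can play only the role of the largest term in an $\LL$-triple contained in $[n]$: such a $z$ could be the ``$x$'' only if $pz+q \leq n$, i.e.\ $z \leq (n-q)/p$, and the ``$y$'' only if $p+qz \leq n$, i.e.\ $z \leq (n-p)/q$; but since $p \geq q$ and $n \geq p+q$ one checks $(n-q)/p \leq (n-p)/q < m+1$, so both options are excluded once $z > m$. Now let $S$ be a maximal $\LL$-free subset of $[n]$ and let $z \in (m,n]$. If $z \notin S$ then, by maximality, $S \cup \{z\}$ contains an $\LL$-triple; since $S$ itself contains none and $z$ can only be the largest term, there exist $x,y \in S$ with $px+qy=z$, and then $px \leq z-q \leq n-q$ and $qy \leq z-p \leq n-p$, so $x \leq (n-q)/p \leq (n-p)/q < m+1$ and $y \leq (n-p)/q < m+1$; being integers, $x,y \in S \cap [m]$. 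Conversely, if $z \in S$ then, as $S$ is $\LL$-free, no such $x,y \in S \cap [m]$ exist. Therefore
$$S \cap (m,n] \;=\; \{\, z \in (m,n] : \text{there do not exist } x,y \in S\cap[m] \text{ with } px+qy=z \,\},$$
so $S \cap (m,n]$ is a function of $S \cap [m]$.

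Consequently the map $S \mapsto S \cap [m]$ is an injection from the family of maximal $\LL$-free subsets of $[n]$ into the family of $\LL$-free subsets of $[m]$ (note that $S \cap [m] \subseteq S$ is $\LL$-free), which yields $f_{\max}(n,\LL) \leq f(m,\LL) = f(\floor{\frac{n-p}{q}},\LL)$. Since $m \to \infty$ with $n$ and $m = \Theta(n)$, Green's bound $f(m,\LL) = 2^{\mu_{\LL}(m)+o(m)}$ gives $f_{\max}(n,\LL) \leq 2^{\mu_{\LL}(\floor{\frac{n-p}{q}})+o(n)}$, which is the first assertion. For the second, under the stated hypotheses on $p,q$, Theorem~\ref{number} applied with the parameter $m$ gives $f(m,\LL) = \Theta(2^{\mu_{\LL}(m)})$, whence $f_{\max}(n,\LL) \leq f(m,\LL) = O(2^{\mu_{\LL}(\floor{\frac{n-p}{q}})})$.

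The substantive step is the determination of $S \cap (m,n]$ by $S \cap [m]$; here the only thing requiring care is verifying the chain of inequalities $(n-q)/p \leq (n-p)/q < m+1$ that forces elements of $(m,n]$ to occur solely as the largest term of an $\LL$-triple, and then checking that the witnesses $x,y$ land in $[m]$. Once this is in place, both assertions follow immediately from the quoted counting results, so I do not anticipate any real obstacle beyond this bookkeeping; in particular, the role of maximality is exactly to supply the ``only if'' direction in the displayed characterisation, and it cannot be dispensed with.
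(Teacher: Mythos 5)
Your argument is correct and matches the paper's approach: the paper proves the same intermediate bound $f_{\max}(n,\LL)\leq f(\floor{(n-p)/q},\LL)$ (their Lemma~\ref{MSF3}) from exactly the observation you use, that every $\LL$-triple in $[n]$ has at most one element exceeding $\floor{(n-p)/q}$, and then concludes via Theorems~\ref{Gr} and~\ref{number}. The only cosmetic difference is that the paper phrases the key step via the link graph $L_S[B]$ having a unique maximal independent set (applying Lemma~\ref{L4}), whereas you state directly that $S\cap(m,n]$ is determined by $S\cap[m]$ for maximal $S$; these are the same fact in different language.
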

In Section~\ref{secmax} we discuss in what cases a bound as in Theorem~\ref{max3} is stronger than the bound in Theorem~\ref{max2} (and vice versa).
We also provide lower bounds on $f_{\max}(n,\LL)$ for all equations $\LL$ of the form $px+qy=z$ where $p,q\geq 2$ are integers; see Proposition~\ref{MSF6}.

Our results suggest that, in contrast to the case of $f(n,\LL)$, it is unlikely there is a `simple' general asymptotic formula for $f_{\max}(n,\LL)$ for all homogeneous linear equations $\LL$. 
It would be extremely interesting 
to make further progress on this problem.

The paper is organised as follows. In the next section we collect together a number of useful tools. In Section~\ref{SLSF} we prove Theorem~\ref{structure}. Theorem~\ref{number} is proven in Section~\ref{NSFS}.
We prove our results on the number of maximal $\LL$-free sets in Section~\ref{secmax}.

%\medskip

%\noindent
%{\bf Notation.} 
%We will  write $0<a_1 \ll a_2 \ll a_3$ to mean that we can choose the constants
%$a_1,a_2,a_3$ from right to left. More
%precisely, there are increasing functions $f$ and $g$ such that, given
%$a_3$, whenever we choose some $a_2 \leq f(a_3)$ and $a_1 \leq g(a_2)$, all
%calculations needed in our proof are valid.

%%%%%%%%%%%%%%%%%%%
\section{Containers and independent sets in graphs}\label{tools}
\subsection{Container and removal lemmas}
Recently the method of \emph{containers} has proven powerful in tackling a range of problems in combinatorics and other areas,  
 in particular due to the work of Balogh, Morris and Samotij~\cite{BMS} and Saxton and Thomason~\cite{ST}. 
Roughly speaking this method states that for certain (hyper)graphs $G$, the independent sets of $G$ lie only in a small number of subsets of $V(G)$ called \emph{containers}, where each container is an `almost independent set'.

Recall that, given a three-variable linear equation $\LL$, an $\LL$-triple is a multiset $\{x,y,z\}$ which forms a solution to $\LL$. 
Let $H$ denote the hypergraph with vertex set $[n]$ and edges corresponding to $\LL$-triples. Then an independent set in $H$ is precisely an $\LL$-free set.
 
The following container lemma is a special case of a result of Green (Proposition 9.1 of ~\cite{G-R}).
Lemma~\ref{L1}(i)--(iii) is stated explicitly in~\cite{G-R}. Lemma~\ref{L1}(iv)  follows as an immediate consequence of Lemma~\ref{L1}(i) and Lemma~\ref{L2} below.

\begin{lemma}\label{L1}
\cite{G-R} 
Fix a three-variable homogeneous linear equation $\LL$. There exists a family $\mathcal{F}$ of subsets of $[n]$ with the following properties:

\begin{enumerate}[(i)]
  \item{Every $F \in \mathcal{F}$ has at most $o(n^2)$ $\LL$-triples.}
  \item{If $S \subseteq [n]$ is $\LL$-free, then $S$ is a subset of some $F \in \mathcal{F}$.}
  \item{$|\mathcal{F}|=2^{o(n)}$.}
  \item{Every $F \in \mathcal{F}$ has size at most $\mu_\LL(n)+o(n)$.}
\end{enumerate}
%\begin{itemize}
%  \item[(i)]{Every member of $\mathcal{F}$ has at most $o(n^2)$ $\LL$-triples.}
%  \item[(ii)]{If $S \subseteq [n]$ is $\LL$-free, then $S$ is contained in some member of $\mathcal{F}$.}
%  \item[(iii)]{$|\mathcal{F}|=2^{o(n)}$.}
%  \item[(iv)]{Every member of $\mathcal{F}$ has size at most $\mu_\LL(n)+o(n)$.}
%\end{itemize}
\end{lemma}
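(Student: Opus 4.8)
The plan is to treat parts (i)--(iii) as a direct specialisation of Green's container result and to obtain part (iv) as a short deduction from part (i) together with the arithmetic removal lemma. For (i)--(iii): Proposition~9.1 of \cite{G-R}, applied to our fixed three-variable homogeneous equation $\LL$ (with the hypergraph $H$ on vertex set $[n]$ whose edges are the $\LL$-triples), provides a family $\mathcal{F}$ of subsets of $[n]$ such that every independent set of $H$ — that is, every $\LL$-free subset of $[n]$ — lies inside some $F \in \mathcal{F}$, such that $|\mathcal{F}| = 2^{o(n)}$, and such that each $F$ spans only $o(n^2)$ edges of $H$, i.e. at most $o(n^2)$ $\LL$-triples. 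Transcribing this statement yields (i)--(iii) with no further work, so the only genuine content is the derivation of (iv).

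For (iv) I would argue as follows. Fix any $F \in \mathcal{F}$. By (i), $F$ contains at most $o(n^2)$ $\LL$-triples, so the removal lemma (Lemma~\ref{L2}) applies to $F$ and produces a subset $F' \subseteq F$ with $|F \setminus F'| = o(n)$ such that $F'$ is $\LL$-free. By the definition of $\mu_\LL(n)$ we then have $|F'| \leq \mu_\LL(n)$, whence $|F| = |F'| + |F \setminus F'| \leq \mu_\LL(n) + o(n)$. Since this holds for every $F \in \mathcal{F}$, we get (iv).

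The only point requiring a little care — and it is not really an obstacle — is uniformity of the error terms: one needs the $o(n^2)$ in (i) and the $o(n)$ loss in Lemma~\ref{L2} each to be witnessed by a single function of $n$ depending only on $\LL$, so that the $o(n)$ appearing in the conclusion of (iv) is legitimate and uniform over all $F \in \mathcal{F}$. This is guaranteed by the quantitative forms of the container and removal lemmas of \cite{G-R}, in which the relevant thresholds depend only on the equation and the prescribed density, not on the particular set under consideration. Beyond invoking this established machinery there is nothing further to prove; the substance of the lemma lies in Green's work, and our role here is simply to combine the container statement with the removal lemma to extract the size bound in (iv).
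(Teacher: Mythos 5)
Your proposal matches the paper exactly: parts (i)--(iii) are quoted as a special case of Green's Proposition~9.1, and part (iv) is deduced by applying the removal lemma (Lemma~\ref{L2}) to a container $F$, splitting it as an $\LL$-free set plus an $o(n)$ remainder, and bounding the $\LL$-free part by $\mu_\LL(n)$. The reasoning, including the remark about uniformity of the error terms, is correct and is precisely the route the authors indicate.
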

Throughout the paper we refer to the elements of 
$\mathcal{F}$ as \emph{containers}. Notice that Lemma~\ref{L1}(iv) gives a bound on the size of the containers in terms of $\mu_\LL(n)$ even though, in general, the precise value of $\mu_\LL(n)$ is not known.

The following removal lemma is a special case of a result of Green (Theorem 1.5 in~\cite{G-R}). This result was also generalised to systems of linear equations by Kr\'al', Serra and Vena (Theorem 2 in ~\cite{ksv2}). 

\begin{lemma}\label{L2}
\cite{G-R}  Fix a three-variable homogeneous linear equation $\LL$. Suppose that $A \subseteq [n]$ is a set containing $o(n^2)$ $\LL$-triples. Then there exist $B$ and $C$ such that $A=B \cup C$ where $B$ is $\LL$-free and $|C|=o(n)$.
\end{lemma}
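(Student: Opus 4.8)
The plan is to prove this (Green's removal lemma) by the standard reduction to the triangle removal lemma of Ruzsa and Szemer\'edi, combined with an averaging argument that exploits the Cayley-type structure of the auxiliary graph.

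Write $\LL$ as $a_1x_1+a_2x_2+a_3x_3=0$ with all $a_i\neq 0$. First I would pass to a cyclic group: by Bertrand's postulate choose a prime $N$ with $(|a_1|+|a_2|+|a_3|)n<N<2(|a_1|+|a_2|+|a_3|)n$, and regard $[n]$ as a subset of $\mathbb Z_N$. Then $N$ is coprime to each $a_i$, and since $N$ exceeds $(|a_1|+|a_2|+|a_3|)n$, a triple with entries in $[n]$ is a solution of $\LL$ over $\mathbb Z$ exactly when it is one modulo $N$. Put $A_i:=a_iA\subseteq\mathbb Z_N$; as each $a_i$ is a unit, $|A_i|=|A|$, and the ordered solutions of $\LL$ in $A$ are in bijection with triples $(y_1,y_2,y_3)\in A_1\times A_2\times A_3$ satisfying $y_1+y_2+y_3=0$. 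Write $r$ for the number of the latter; each $\LL$-triple accounts for at most $6$ ordered solutions, so $r=o(n^2)$ by hypothesis.

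Next I would build the tripartite graph $G$ on vertex classes $V_1=V_2=V_3=\mathbb Z_N$, joining $u\in V_1$ to $v\in V_2$ whenever $v-u\in A_3$, joining $v\in V_2$ to $w\in V_3$ whenever $w-v\in A_1$, and joining $u\in V_1$ to $w\in V_3$ whenever $u-w\in A_2$. Writing $y_1=w-v$, $y_2=u-w$, $y_3=v-u$ one has $y_1+y_2+y_3=0$ identically, so the triangles of $G$ are precisely the triples $(u,v,w)$ for which $(y_1,y_2,y_3)\in A_1\times A_2\times A_3$ satisfies $y_1+y_2+y_3=0$, and each such solution yields exactly $N$ triangles (one per choice of $u$). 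Hence $G$ has $Nr=o(n^3)=o(N^3)$ triangles, since $N=\Theta(n)$. By the triangle removal lemma, for every $\eta>0$ and all large $n$ there is a set $R$ of at most $\eta N^2$ edges of $G$ whose deletion destroys every triangle.

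Finally — and this is the step I expect to be the main obstacle — I would upgrade this arbitrary edge deletion to the deletion of only $o(n)$ \emph{elements} of $A$. The edges of $G$ between $V_1$ and $V_2$ split into the diagonals $\{(u,u+s):u\in\mathbb Z_N\}$, $s\in A_3$, and likewise for the other two pairs; call an element $s\in A_3$ (respectively of $A_1$, $A_2$) \emph{heavy} if $R$ contains more than $N/6$ edges of the corresponding diagonal. Double counting gives at most $6\eta N$ heavy elements in each $A_i$; let $C_i$ collect them and set $A_i':=A_i\setminus C_i$. If some $(y_1,y_2,y_3)\in A_1'\times A_2'\times A_3'$ satisfied $y_1+y_2+y_3=0$, then each of its $N$ triangles in $G$ would meet $R$, yet at most $3\cdot N/6<N$ of them can, a contradiction; so $A_1'\times A_2'\times A_3'$ contains no such solution. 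Pulling back through the dilations, $A_i'=a_i(A\setminus C)$ where $C:=\bigcup_i a_i^{-1}C_i\subseteq A$ has $|C|\le 18\eta N=O(\eta n)$, and then $B:=A\setminus C$ contains no solution of $\LL$ at all over $\mathbb Z$, in particular no non-trivial one, so $B$ is $\LL$-free. Letting $\eta\to 0$ yields $|C|=o(n)$, as required. The only point needing care is that the degenerate solutions $(x,x,x)$, which occur precisely when $a_1+a_2+a_3=0$, are not an obstruction: they number $O(n)$ among all $\LL$-triples, and in the translation-invariant case one may instead simply note that $o(n^2)$ $\LL$-triples forces $|A|=o(n)$ by a Varnavides-type argument and take $B=\emptyset$.
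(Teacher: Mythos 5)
The paper does not prove this lemma: it is quoted as Theorem~1.5 of Green's paper~\cite{G-R}, where it is derived from an arithmetic regularity lemma for abelian groups. Your argument instead runs the classical Ruzsa--Szemer\'edi reduction to the graph triangle removal lemma, which is a well-known and perfectly valid alternative route for the cyclic case, and it is essentially correct: the choice of a prime $N$ with $(|a_1|+|a_2|+|a_3|)n<N<2(|a_1|+|a_2|+|a_3|)n$ makes each $a_i$ a unit and forces agreement of solutions over $\mathbb{Z}$ and over $\mathbb{Z}_N$ for triples in $[n]$; the tripartite Cayley-type graph is set up so that each zero-sum triple $(y_1,y_2,y_3)\in A_1\times A_2\times A_3$ contributes exactly $N$ pairwise edge-disjoint triangles; and the ``heavy diagonal'' double count correctly upgrades the deletion of $O(\eta N^2)$ edges to the deletion of $O(\eta N)$ elements of $A$. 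What Green's regularity-lemma proof buys is uniformity over all finite abelian groups and a route to quantitative bounds of tower type there, whereas the triangle-removal reduction is shorter and more self-contained for $[n]\hookrightarrow\mathbb{Z}_N$.

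A few small points worth tightening. The removal lemma is applied to a graph on $3N$ vertices, so the edge bound should read $\eta(3N)^2$ and the heavy-element bound becomes $54\eta N$ per class; this only changes the constant in $|C|=O(\eta n)$. The line ``$A_i'=a_i(A\setminus C)$'' should be the containment $a_i(A\setminus C)\subseteq A_i'$, which is all you use. The inequality ``at most $3\cdot N/6<N$ of the $N$ triangles meet $R$'' silently relies on the facts that the $N$ triangles of a fixed solution are pairwise edge-disjoint and that every edge of any one of them lies on one of the three diagonals determined by $(y_1,y_2,y_3)$; both are true and easy, but since this is the crux of the step it deserves a sentence. Finally, the closing caveat about the translation-invariant case is unnecessary: your main argument shows that $B=A\setminus C$ contains no solution of $\LL$ at all, and when $a_1+a_2+a_3=0$ the trivial solution $(b,b,b)$ for any $b\in B$ then forces $B=\emptyset$, hence $|A|=|C|=O(\eta n)$, so $|A|=o(n)$ drops out of the same argument without any separate Varnavides appeal.
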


We will also apply the following bound on the number of $\LL$-free sets.

\begin{thm}\label{Gr}
\cite{G-R}  Fix a homogeneous linear equation $\LL$. Then $f(n,\LL) = 2^{\mu_{\LL}(n)+o(n)}$.
\end{thm}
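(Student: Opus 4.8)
The plan is to derive this as a short consequence of the container machinery already stated, namely Lemma~\ref{L1}; the genuinely difficult part — constructing the family of containers — is Green's and is being used here as a black box.

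For the lower bound I would simply observe that every subset of a maximum $\LL$-free subset of $[n]$ is itself $\LL$-free, so $f(n,\LL) \geq 2^{\mu_{\LL}(n)}$, which already has the right shape.

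For the upper bound I would apply Lemma~\ref{L1} to obtain the family $\mathcal{F}$ of containers. By Lemma~\ref{L1}(ii) every $\LL$-free set $S \subseteq [n]$ is a subset of some $F \in \mathcal{F}$, and hence $f(n,\LL) \leq \sum_{F \in \mathcal{F}} 2^{|F|}$. By Lemma~\ref{L1}(iv) each container satisfies $|F| \leq \mu_{\LL}(n) + o(n)$, and by Lemma~\ref{L1}(iii) there are only $|\mathcal{F}| = 2^{o(n)}$ of them, so
$$f(n,\LL) \leq |\mathcal{F}| \cdot 2^{\mu_{\LL}(n)+o(n)} = 2^{o(n)} \cdot 2^{\mu_{\LL}(n)+o(n)} = 2^{\mu_{\LL}(n)+o(n)}.$$
If one prefers not to invoke part (iv), one can instead use parts (i)--(iii) together with Lemma~\ref{L2}: each container $F$ has $o(n^2)$ $\LL$-triples, so $F = B \cup C$ with $B$ being $\LL$-free and $|C| = o(n)$; every $\LL$-free $S \subseteq F$ then decomposes as $(S\cap B) \cup (S\cap C)$, giving at most $2^{|B|}\cdot 2^{|C|} \leq 2^{\mu_{\LL}(n)+o(n)}$ possibilities within each container, and summing over $\mathcal{F}$ gives the same conclusion. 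Combining the two bounds yields $f(n,\LL) = 2^{\mu_{\LL}(n)+o(n)}$.

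The only thing requiring care is the bookkeeping of error terms — checking that the multiplicative factor $2^{o(n)}$ coming from the number of containers and the additive $o(n)$ in each container's size are both absorbed into the final $o(n)$ in the exponent — together with the observation that no circularity arises, since Lemma~\ref{L1}(iv) is deduced from Lemma~\ref{L2} and not from Theorem~\ref{Gr}. In this sense there is no real obstacle here: essentially all of the work sits inside Lemma~\ref{L1}.
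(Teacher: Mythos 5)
Your argument is sound, but note that the paper does not actually prove Theorem~\ref{Gr}: it is quoted directly from Green~\cite{G-R} as a black box (alongside the container and removal lemmas), so there is no in-paper proof to compare against. What you have written is essentially a reconstruction of Green's own derivation --- the lower bound $f(n,\LL)\geq 2^{\mu_{\LL}(n)}$ from subsets of an extremal $\LL$-free set, and the upper bound by summing $2^{|F|}$ over the $2^{o(n)}$ containers, each of size at most $\mu_{\LL}(n)+o(n)$. Both routes you offer for bounding $|F|$ (citing Lemma~\ref{L1}(iv) directly, or applying Lemma~\ref{L2} inside each container) are valid, and your non-circularity check is correct: the paper states that Lemma~\ref{L1}(iv) is an immediate consequence of Lemma~\ref{L1}(i) and Lemma~\ref{L2}, not of Theorem~\ref{Gr}.

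The one genuine caveat is a mismatch of generality. Theorem~\ref{Gr} is stated for an arbitrary homogeneous linear equation, whereas Lemma~\ref{L1} and Lemma~\ref{L2} as stated in this paper apply only to \emph{three-variable} homogeneous equations. As written, your proof therefore establishes only the three-variable case; to obtain the full statement you must invoke the general-arity versions of Green's container and removal lemmas, which the paper confirms exist (see the remark preceding Proposition~\ref{MSF8}) but does not state explicitly. With that substitution the argument goes through verbatim, and the bookkeeping of the two $o(n)$ error terms is exactly as you describe.
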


We will use the above results to deduce upper bounds on the number of maximal $\LL$-free sets (Theorems~\ref{max1},~\ref{max2} and~\ref{max3}).

%%%%%%%%%%%%%
\subsection{Independent sets in graphs}
Let $G$ be a graph and consider any subset $X\subseteq V(G)$. Let IS$(G)$ denote the number of independent sets in $G$. Let $G[X]$ denote the induced subgraph of $G$ on the vertex set $X$ and $G\setminus X$ denote the induced subgraph of $G$ on the vertex set $V(G) \setminus X$.

\begin{fact}\label{ISL}
Let $G$ be a graph and let $A_1 , \dots , A_r$ be a partition of  $V(G)$. Then \rm IS\em $(G) \leq \prod_{i=1}^{r}$ \rm IS\em $(G[A_i])$. 
\end{fact}

%\begin{proof}
%Any subgraph $H$ of $G$ on the same vertex set $V(G)$ satisfies IS$(G) \leq $ IS$(H)$ since the number of independent sets in a graph never increases under addition of an edge. Also if $G$ is a union of $r$ components $X_1, \cdots, X_r$, then IS$(G)=$ IS$(X_1) %\times \cdots \times$ IS$(X_r)$. The graph $G[A_1] \cup \cdots \cup G[A_r]$ is a subgraph of $G$ on the same vertex set $V(G)$. Hence IS$(G) \leq$ IS$ (G[A_1] \cup \cdots \cup G[A_r]) = $ IS$(G[A_1]) \times \cdots \times $ IS$(G[A_r])$.%
%\end{proof}
The following simple lemma will be used in the proof of Theorem~\ref{number}.
\begin{lemma}\label{ISL2}
Let $G$ be a graph on $n$ vertices and $M$ be a matching in $G$ which consists of  $e$ edges. Suppose that $v\in V(G)$ lies in $M$. Then the number of independent sets in $G$ which contain $v$ is at most $3^{e-1} \cdot 2^{n-2e}$.
\end{lemma}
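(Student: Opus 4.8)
The plan is to use the matching $M$ to split $V(G)$ into small pieces and count, for each piece, the possible intersections of it with an independent set containing $v$. Let $e_1=\{v,w\}$ be the edge of $M$ through $v$, and let $e_2,\dots,e_e$ be the remaining $e-1$ edges of $M$. These $e$ edges cover exactly $2e$ vertices, so the set $W$ of vertices not covered by $M$ has size $n-2e$. We thus obtain a partition of $V(G)$ into the $e$ pairs $V(e_1),\dots,V(e_e)$ together with the $n-2e$ singletons $\{u\}$, $u\in W$. Note $e\geq 1$ since $v$ lies in $M$, and $2e\leq n$ since $M$ is a matching, so both $e-1$ and $n-2e$ are non-negative.

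Next I would record the standard observation behind Fact~\ref{ISL}: the map sending an independent set $I$ of $G$ to the tuple of its intersections with the parts of this partition is injective, and each coordinate $I\cap A$ is an independent set of the induced subgraph $G[A]$. Restricting this map to those independent sets $I$ with $v\in I$ keeps it injective, and its image is contained in the product of the sets of independent sets of the $G[A]$, with the coordinate corresponding to $V(e_1)$ constrained to equal $\{v\}$. Hence the number of independent sets of $G$ containing $v$ is at most the product over all parts $A$ of (the number of admissible intersections on $A$).

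Finally I would evaluate this product part by part. On $V(e_1)=\{v,w\}$: since $v,w$ are adjacent and $v\in I$ is required, the only admissible intersection is $\{v\}$, giving exactly $1$ choice. On each of the $e-1$ parts $V(e_i)$ with $i\geq 2$: the two endpoints are adjacent, so $G[V(e_i)]$ has exactly $3$ independent sets (the empty set and the two singletons). On each singleton part $\{u\}$, $u\in W$: there are $2$ independent sets. Multiplying gives that the number of independent sets of $G$ containing $v$ is at most $1\cdot 3^{e-1}\cdot 2^{n-2e}$, which is the claimed bound.

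There is essentially no obstacle: the statement follows immediately from the partition bound of Fact~\ref{ISL} once one notes that fixing $v\in I$ removes all freedom in the matching edge containing $v$; the only point to be careful about is that the bound is an upper bound precisely because the intersection map is injective (not surjective) onto the product.
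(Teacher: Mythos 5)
Your proof is correct and follows essentially the same approach as the paper: both arguments use the product bound over a partition of the vertex set induced by the matching, noting that the edge containing $v$ contributes only one admissible choice while each other matching edge contributes $3$ and each uncovered vertex at most $2$. The only cosmetic difference is that the paper first deletes $v$ and its $M$-neighbour and then partitions the remainder, while you keep that pair in the partition and observe its contribution is $1$.
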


\begin{proof}
First note that the number of independent sets in $G$ which contain $v$ is at most IS$(G\setminus X)$ where $X$ consists of $v$ and its neighbour in $M$.
Let $A_1 , \dots , A_{e}$ be a partition of the vertex set $V(G\setminus X)$, where if $1 \leq i \leq e-1$ then $A_i$  contains precisely the two vertices from some edge in $M$. So $|A_e|= n-2e$.
Clearly IS$(G[A_i])=3$ for  $1 \leq i \leq e-1$ and IS$(G[A_e])\leq 2^{n-2e}$. The result then follows by Fact~\ref{ISL}.
\end{proof}

%%%%%%%%%%%%%
\subsection{Link graphs and maximal independent sets}\label{secli}
We obtain many of our results by counting the number of maximal independent sets in various auxiliary graphs. Similar
techniques were used in~\cite{wolf, BLST, BLST2}, and in the graph setting in~\cite{sar, BLPS}. To be more precise, let $B$ and $S$ be disjoint subsets of $[n]$ and fix a three-variable linear equation $\LL$. The \emph{link graph $L_S[B]$ of $S$ on $B$} has vertex set $B$, and an edge set consisting of the following two types of edges:

\begin{enumerate}[(i)]
\item{ Two vertices $x$ and $y$ are adjacent if there exists an element $z \in S$ such that $\{x,y,z\}$ is an $\LL$-triple;}
\item{ There is a loop at a vertex $x$ if there exists an element $z \in S$ or elements $z,z' \in S$ such that  $\{x,x,z\}$ or $\{x,z,z'\}$ is an $\LL$-triple.}
\end{enumerate}
Notice that since the only possible trivial solutions to a three-variable linear equation $\LL$ are of the form $\{x,x,x\}$, all the edges in $L_S[B]$ correspond to non-trivial $\LL$-triples.

 The following simple lemma was  stated in~\cite{BLST,BLST2} for sum-free sets, but  extends to three-variable linear equations. 

\begin{lemma}\label{L4}
Fix a three-variable  linear equation $\LL$. Suppose that $B,S$ are disjoint $\LL$-free subsets of $[n]$. If $I \subseteq B$ is such that $S \cup I$ is a maximal $\LL$-free subset of $[n]$, then $I$ is a maximal independent set in $G:=L_{S}[B]$.
\end{lemma}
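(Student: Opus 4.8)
The plan is to verify the two defining properties of a maximal independent set --- independence and maximality --- directly from the definitions, exploiting the fact (noted just before the statement) that every edge of $L_S[B]$, including every loop, records a \emph{non-trivial} $\LL$-triple, together with the fact that the only trivial three-variable solutions are of the form $\{x,x,x\}$.

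\textbf{Step 1 (independence).} First I would check that $I$ is an independent set of $G=L_S[B]$. Suppose not. Then either $G$ has a loop at some $x\in I$, or $G$ has an edge $xy$ with $x,y\in I$. In the first case, by definition~(ii) of the link graph there is $z\in S$ with $\{x,x,z\}$ an $\LL$-triple, or there are $z,z'\in S$ with $\{x,z,z'\}$ an $\LL$-triple; in the second case, by~(i), there is $z\in S$ with $\{x,y,z\}$ an $\LL$-triple. In every case we obtain a non-trivial $\LL$-triple all of whose entries lie in $S\cup I$, contradicting the assumption that $S\cup I$ is $\LL$-free.

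\textbf{Step 2 (maximality).} Now suppose $I$ is not a maximal independent set of $G$, so there is $v\in B\setminus I$ with $I\cup\{v\}$ independent in $G$; in particular $G$ has no loop at $v$ and no edge from $v$ to any vertex of $I$. I claim $S\cup I\cup\{v\}$ is $\LL$-free, which contradicts the maximality of the $\LL$-free set $S\cup I$ and finishes the proof. Suppose instead $S\cup I\cup\{v\}$ contains a non-trivial $\LL$-triple $T$ (a multiset). Since $S\cup I$ is $\LL$-free, $v\in T$; and since the only trivial three-variable solutions are of the form $\{x,x,x\}$, the element $v$ occurs in $T$ either once or twice. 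Write the remaining entries of $T$ as $w_1,w_2$ (with $w_1=w_2=w$ when $v$ occurs twice). If every $w_i$ lies in $I$, then $T$ is a non-trivial $\LL$-triple contained entirely in $B$, contradicting that $B$ is $\LL$-free. Otherwise some $w_i\in S$: if $v$ occurs twice this forces $w\in S$, so $\{v,v,w\}$ is an $\LL$-triple and by~(ii) $G$ has a loop at $v$, a contradiction; if $v$ occurs once and both $w_1,w_2\in S$, then $\{v,w_1,w_2\}$ gives a loop at $v$ by~(ii); and if exactly one, say $w_2\in S$ while $w_1\in I$, then $\{v,w_1,w_2\}$ gives an edge $vw_1$ of $G$ with $w_1\in I$ by~(i). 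In each of these cases $I\cup\{v\}$ is not independent, a contradiction.

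\textbf{Main obstacle.} There is no serious difficulty here; the only thing requiring care is the bookkeeping in Step~2 --- tracking the multiplicity of $v$ in the multiset $T$ and how many of the other two entries fall in $S$ versus $I$ --- so that every case maps onto either the $\LL$-freeness of $B$ or an explicit loop/edge that is forbidden by the independence of $I\cup\{v\}$.
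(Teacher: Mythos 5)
Your proof is essentially correct and follows the natural (indeed the only reasonable) approach of verifying independence and maximality directly from the definition of the link graph; the paper itself does not spell out a proof of this lemma but simply cites \cite{BLST,BLST2} for the sum-free case and asserts the extension. Step~1 is fine: each edge or loop of $L_S[B]$ records a triple with at least one entry in $S$ and one in $B$, hence at least two distinct entries, hence a non-trivial $\LL$-triple, and such a triple lying in $S\cup I$ contradicts $\LL$-freeness.

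There is one small logical flaw in Step~2, namely the sentence claiming that ``since the only trivial three-variable solutions are of the form $\{x,x,x\}$, the element $v$ occurs in $T$ either once or twice.'' That reasoning uses the \emph{converse} of the paper's observation. The paper's statement is that every trivial solution has the form $\{x,x,x\}$; it does not say that every solution of the form $\{x,x,x\}$ is trivial. In general that converse fails: for a non-homogeneous equation with $a_1+a_2+a_3\neq 0$, the value $x=b/(a_1+a_2+a_3)$ (when it lies in $[n]$) gives a non-trivial $\LL$-triple $\{x,x,x\}$, and the lemma is stated for arbitrary three-variable linear equations, not only homogeneous ones. So ``$T$ is non-trivial'' does not by itself preclude $T=\{v,v,v\}$. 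The repair is immediate and is already present in your own argument: if $v$ occurred three times then $T=\{v,v,v\}$ would be a non-trivial $\LL$-triple contained entirely in $B$ (since $v\in B$), contradicting that $B$ is $\LL$-free --- exactly the reasoning you use in the ``every $w_i$ lies in $I$'' case. With that one-line fix the case analysis is exhaustive and the proof is sound.
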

Let MIS$(G)$ denote the number of maximal independent sets in $G$. Suppose we have a container $F \in \mathcal F$ as in Lemma~\ref{L1} and suppose $F=A \cup B$ where $B$ is $\LL$-free. Observe that any maximal $\LL$-free subset of $[n]$ in $F$ can be found by first choosing an $\LL$-free set $S \subseteq A$, and then extending $S$ in $B$. Note that by Lemma~\ref{L4}, the number of possible extensions of $S$ in $B$ (which we shall refer to as $N(S,B)$) is bounded from above by the number of maximal independent sets in the link graph $L_S[B]$ (i.e. we have $N(S,B) \leq {\rm MIS}(L_S[B])$). Hence Lemma~\ref{L4} is a useful tool for bounding the number of maximal $\LL$-free subsets of $[n]$.

In particular, we will apply the following result in combination with Lemma~\ref{L4}.
The first part was proven by Moon and Moser~\cite{MM} and the second part by Hujter and Tuza~\cite{HT}. We use the first condition in the proof of   Theorems~\ref{max1} and~\ref{max2}. 

\begin{thm}\label{L5}
Suppose that $G$ is a graph on $n$ vertices possibly with loops. Then the following bounds hold.

\begin{enumerate}[(i)]
\item{ \rm MIS\em $(G) \leq 3^{n/3}$;}
\item{ \rm MIS\em $(G) \leq 2^{n/2}$ if $G$ is additionally triangle-free.}
\end{enumerate}
%\begin{itemize}
%\item[(i)]{ \rm MIS\em $(G) \leq 3^{n/3}$;}
%\item[(ii)]{ \rm MIS\em $(G) \leq 2^{n/2}$ if $G$ is additionally triangle-free.}
%\end{itemize}
\end{thm}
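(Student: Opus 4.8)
The plan is to reduce both bounds to the case of \emph{loopless} graphs, at which point (i) is exactly the theorem of Moon and Moser~\cite{MM} and (ii) is exactly the theorem of Hujter and Tuza~\cite{HT}. So the only thing genuinely to be checked is the loop reduction; I will also recall the short inductive argument behind (i), since it makes the shape of (ii) transparent.

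I would first handle loops. The key observation is that a vertex carrying a loop lies in no independent set of $G$. Consequently, if $v\in V(G)$ has a loop and $I\subseteq V(G)$ is independent, then $I$ is a \emph{maximal} independent set of $G$ if and only if it is a maximal independent set of $G\setminus\{v\}$: adding $v$ to an independent set is never allowed, so discarding $v$ from the ground set cannot affect maximality. Hence $\mathrm{MIS}(G)=\mathrm{MIS}(G\setminus\{v\})$, and $G\setminus\{v\}$ has $n-1$ vertices and is still triangle-free whenever $G$ is. Deleting looped vertices one at a time, I may therefore assume $G$ is loopless, at the price of replacing $n$ by some $n'\le n$; since $3^{n'/3}\le 3^{n/3}$ and $2^{n'/2}\le 2^{n/2}$, it suffices to prove both bounds for loopless graphs.

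For (i) with $G$ loopless, I would run the standard Moon--Moser induction on $n$ (the base cases being trivial). If $G$ has an isolated vertex $u$, then $u$ lies in every maximal independent set and $\mathrm{MIS}(G)=\mathrm{MIS}(G\setminus\{u\})$, so assume the minimum degree $\delta:=\delta(G)$ satisfies $\delta\ge 1$; pick $v$ with $\deg v=\delta$. Every maximal independent set of $G$ meets $N[v]$ (otherwise it could be extended by $v$), and for each $u\in N[v]$ the maximal independent sets of $G$ containing $u$ inject into the maximal independent sets of $G\setminus N[u]$, a graph on $n-1-\deg u\le n-1-\delta$ vertices. This gives $\mathrm{MIS}(G)\le(\delta+1)\,3^{(n-1-\delta)/3}$, and since $\delta+1\le 3^{(\delta+1)/3}$ for every integer $\delta\ge 1$, we obtain $\mathrm{MIS}(G)\le 3^{n/3}$. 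For (ii) with $G$ loopless, this same induction goes through word for word as long as the minimum-degree vertex has degree $\ne 2$: when $\delta\le 1$ one reduces directly (an isolated vertex, or a leaf, for which the maximal independent sets containing it and those containing its unique neighbour each number at most $2^{(n-2)/2}$), and when $\delta\ge 3$ one uses $\delta+1\le 2^{(\delta+1)/2}$. The only case needing more is $\delta=2$, and this is where triangle-freeness is essential: a degree-$2$ vertex $v$ has non-adjacent neighbours $a,b$, adjacent vertices have disjoint neighbourhoods, and one splits the maximal independent sets according to their trace on $\{v,a,b\}$, recursing on $G\setminus N[v]$, $G\setminus N[a]$ and $G\setminus N[b]$; a degree-$2$ vertex all of whose neighbours also have degree $2$ forces a cycle component $C_\ell$ with $\ell\ge 4$, on which $\mathrm{MIS}(C_\ell)\le 2^{\ell/2}$ holds by direct inspection. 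I would simply cite~\cite{HT} for this last case rather than reproduce the bookkeeping.

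The main obstacle is almost entirely bookkeeping: one must be careful that deleting a looped vertex preserves \emph{maximality} of independent sets (not merely independence), and that triangle-freeness and the inequality $n'\le n$ survive the deletion. Everything of substance is contained in the two cited theorems; the $\delta=2$ step of Hujter--Tuza is the only place where any real combinatorial care is required, and there we invoke it as a black box.
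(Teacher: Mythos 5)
Your proposal is correct and takes essentially the same route as the paper, which states this result as a black box attributed to Moon--Moser~\cite{MM} for (i) and Hujter--Tuza~\cite{HT} for (ii) without supplying a proof. Your explicit reduction of the looped case to the loopless case (a looped vertex lies in no independent set, so deleting it changes neither the family of maximal independent sets nor triangle-freeness, and only decreases $n$) is exactly the detail the paper leaves implicit, and your sketch of the underlying inductions is sound.
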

To prove Theorem~\ref{max2} we will combine Theorem~\ref{L5}(ii) and the following result.

%Note that the Hujter--Tuza bound makes use of triangle-free graphs. We use this together with Lemma~\ref{L3} above to achieve further bounds on the number of maximal $\LL$-free sets in Theorem~\ref{MSF5}.
%The following lemma is an observation concerning conditions on when a link graph may be triangle-free.

\begin{lemma}\label{L3}
Let $\LL$ denote the equation $px+qy=z$ where $p\geq q \geq 2$ and $p,q \in \mathbb{N}$. Let $A\subseteq [1,u]$ and let $B \subseteq [u+1,n]$ for some $u \in [n]$.  Consider the link graph $G:=L_{A}[B]$ of $A$ on $B$. If $q^2 \geq p+q$ then $G$ is triangle-free. 
\end{lemma}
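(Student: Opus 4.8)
The plan is to show that the link graph $G := L_A[B]$ cannot contain a triangle by analysing what a triangle would force, using the hypothesis $q^2 \geq p+q$ together with the fact that $B$ lives in the upper interval $[u+1,n]$ while $A$ lives in the lower interval $[1,u]$. First I would recall the structure of edges in $G$: an edge $xy$ (with $x,y \in B$) arises from some $z \in A$ such that $\{x,y,z\}$ is an $\LL$-triple, i.e. one of the three variables of $px_1+qx_2=x_3$ is filled by $z \in A$ and the other two by $x,y \in B$. Since $z \le u < x,y$, the element $z$ cannot play the role of the largest variable $x_3$ in a solution where the coefficients are positive (as $x_3 = px+qy > x,y > z$ whenever $p\ge 1$, $q\ge 1$ and $x,y$ are positive); so $z$ must be one of the two variables on the left-hand side. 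Thus each edge of $G$ is ``witnessed'' by an equation of the shape $z$ times a coefficient plus (one endpoint) times the other coefficient equals (the other endpoint), with $z \in A$.

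Next I would set up the contradiction. Suppose $x_1, x_2, x_3 \in B$ form a triangle, with witnesses $z_{12}, z_{13}, z_{23} \in A$ for the three edges. For each edge, the relation forces the larger endpoint to be roughly $p$ or $q$ times the smaller endpoint (plus a bounded-below positive multiple of the $A$-element). WLOG order $x_1 < x_2 < x_3$ (they are distinct since loops are handled separately and a triangle needs three distinct vertices). The edge between the two largest, $x_2 x_3$, forces $x_3 = p x_2 + q z$ or $x_3 = q x_2 + p z$ or $x_3 = p z + q x_2$ (same thing) — in any case $x_3 \ge q x_2 + p$ using $z \ge 1$, or, if $z$ sits with coefficient $q$, $x_3 \ge q x_2$... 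I would carefully enumerate: the cleanest bound is $x_3 \ge q \cdot x_2$ is too weak; rather $x_3 = px_2 + qz \ge px_2 + q$ or $x_3 = qx_2 + pz \ge qx_2 + p$, and since $p \ge q$, in all cases $x_3 \ge q x_2 + q \ge q(x_2+1) > q x_2$. Meanwhile the edge $x_1 x_3$ similarly forces $x_3 \le$ something bounded in terms of $x_1$: if $x_3$ is the large variable then $x_3 = px_1 + qz$ or $qx_1 + pz$ with $z \le u$, giving an \emph{upper} bound, or if instead $x_3$ is a left-variable and $x_1$ the right one, that is impossible since $x_1 < x_3$ and the right variable is the largest. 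The key inequality chain should be: from the $x_2x_3$ edge, $x_2 \le x_3/q$ (roughly), and from the $x_1 x_2$ edge, $x_1 \le x_2 / q \le x_3/q^2$; but from the $x_1 x_3$ edge $x_3 = p x_1 + q z \le p x_1 + q u$, and combining with $x_1, x_2, x_3 > u$ and $x_3/q^2 \ge x_1$ one forces $x_3 \le p x_3/q^2 + q u < p x_3/q^2 + q x_3 /$ something, and the hypothesis $q^2 \ge p+q$ i.e. $p/q^2 + q/q^2 \le 1$... here I'd make the $z$-term controllable by noting $z < x_1 < $ everything, so $qz < q x_1 \le q x_3/q^2 = x_3/q$. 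Then $x_3 = p x_1 + q z < (p/q^2) x_3 + (1/q) x_3 \le ((p+q)/q^2) x_3 \le x_3$, and to get a strict contradiction I need the inequality to be strict somewhere — which it is, since $z < x_1$ strictly and $x_1 < x_3/q^2$ may be strict, or I use that all the ``$\le$'' coming from replacing $z$ by its bound and dropping the $+1$'s leak a genuine gap. If the borderline case $q^2 = p+q$ needs care, I would track the additive constants ($+p$, $+q$, the strict $z < x_1$) which provide the needed slack.

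The main obstacle I anticipate is the bookkeeping over the several cases for which variable of $px+qy=z$ each of $A$ and $B$ occupies in each of the three edges of the putative triangle — there are a handful of configurations (the $A$-witness can sit in the $px$ slot or the $qy$ slot, and either endpoint of the edge can be the ``$z$''), and one must verify the contradiction in each. The conceptual content is uniform, though: every edge of $G$ forces a multiplicative gap of at least $q$ between its endpoints (because the $A$-element is the smallest of the triple and occupies a left slot), so a triangle would force $x_{\max} \ge q^2 x_{\min}$ roughly, while the edge between $x_{\min}$ and $x_{\max}$ simultaneously forces $x_{\max} \le p x_{\min} + q u < (p+q) x_{\min}$-ish, contradicting $q^2 \ge p+q$. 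I would organise the write-up by first proving the ``every edge gives a factor-$q$ gap, with the larger endpoint the $z$-variable'' claim as a sub-step, then deriving the numeric contradiction, paying attention to whether strictness or the additive terms are needed in the equality case $q^2 = p+q$.
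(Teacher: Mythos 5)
Your proposal is correct and uses essentially the same approach as the paper: you identify that in every $\LL$-triple witnessing an edge the $A$-element is the smallest and the larger $B$-endpoint must play the role of $z$, so each edge imposes a multiplicative gap of at least $q$, two edges compound to a factor exceeding $q^2$, while the direct edge $x_1x_3$ bounds $x_3 < (p+q)x_1$ (using the strict inequality $s_3 < x_1$), forcing $q^2 < p+q$. The paper packages this by algebraically substituting the three edge equations to isolate the coefficient $r_1r_3$ and then reading off $r_1r_3 < p+q$ from positivity, but this is the same multiplicative argument in a different bookkeeping style, and both handle the borderline case $q^2 = p+q$ via the strict gap between $A$ and $B$.
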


\begin{proof} Suppose that $q^2 \geq p+q$ and
suppose for a contradiction there is a triangle in $G$ with vertices $b_1 <b_2<b_3$. By definition of the link graph, there exist $s_1,s_2,s_3 \in A$ such that  $\{b_1,b_2,s_1\}, \{b_2,b_3,s_2\}, \{b_1,b_3,s_3\}$ are $\LL$-triples.
%For the triple $\{b_1,b_2,s_1\}$, these are:

%\begin{align*}
%(1) pb_1+qb_2=s_1 & & (2) pb_2+qb_1=s_1 & & (3) pb_1+qs_1=b_2 \\
%(4) ps_1+qb_1=b_2 & & (5) pb_2+qs_1=b_1 & & (6) ps_1+qb_2=b_1 \\
%\end{align*}

%\begin{enumerate}
%\item{$pb_1+qb_2=s_1$}
%\item{$pb_2+qb_1=s_1$}
%\item{$pb_1+qs_1=b_2$}
%\item{$ps_1+qb_1=b_2$}
%\item{$pb_2+qs_1=b_1$}
%\item{$ps_1+qb_2=b_1$}
%\end{enumerate}

Since all numbers in $A$ are smaller than all numbers in $B$ we have $1 \leq s_1,s_2,s_3 < b_1 < b_2 < b_3$. Also, since $p \geq q \geq 2$, for each of our $\LL$-triples $\{b_i,b_j,s_k\}$ (where $b_i<b_j$) it follows that $b_j$ must play the role of $z$ in $\LL$. 

Define a multiset $\{r_i \in \{p,q\} : 1\leq i \leq 6, r_1\neq r_2, r_3 \neq r_4, r_5 \neq r_6\}$. Consider the three equations $r_1b_1+r_2s_1=b_2, r_3b_2+r_4s_2=b_3$ and $r_5b_1+r_6s_3=b_3$. Combining the second and third gives $b_2=(r_5b_1+r_6s_3-r_4s_2)/r_3$. Then combining this with the first equation gives $(r_1r_3-r_5)b_1 +r_2r_3s_1 +r_4s_2=r_6s_3$. Now since $s_3<b_1$ and all terms are at least $1$, for such an inequality to hold we must have $r_1r_3-r_5<r_6$. Since $r_5 \neq r_6$ this means we have $r_1r_3 < p+q$. Hence as $r_1,r_3 \in \{p,q\}$, in order for $G$ to have a triangle at least one of $p^2 < p+q$, $q^2 < p+q$ and $pq < p+q$ must be satisfied. Since $p \geq q \geq 2$, the first and third are not true and so we must have $q^2 < p+q$, a contradiction.
%this means options $1,2,5$ and $6$ are not possible. We can make similar deductions for the triples $\{b_2,b_3,s_2\}, \{b_1,b_3,s_3\}$. 
\end{proof}
\COMMENT{Note: if $q^2 <p+q$ then if $n$ large it seems one can obtain triangles}

We also use link graphs as a means to obtain lower bounds on the number of maximal $\LL$-free sets. We apply the following result in Propositions~\ref{MSF2} and~\ref{MSF6}.

\begin{lemma}\label{L6}
Fix a three-variable  linear equation $\LL$. Suppose that $B,S$ are disjoint $\LL$-free subsets of $[n]$. Let $H$ be an induced subgraph of the link graph $L_S[B]$. Then $f_{\max}(n,\LL) \geq \,$\rm MIS\em $(H)$. 
\end{lemma}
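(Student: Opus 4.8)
The plan is to unwind the definitions. Let $H$ be an induced subgraph of the link graph $L_S[B]$, say $H = L_S[B][C]$ for some $C \subseteq B$; write $V(H)=C$. I want to exhibit $\mathrm{MIS}(H)$ distinct maximal $\LL$-free subsets of $[n]$. The natural idea is: for each maximal independent set $I$ of $H$, form a set $T_I \subseteq [n]$ whose intersection with $C$ is exactly $I$ and whose intersection with $S$ is exactly $S$, then show each $T_I$ is a maximal $\LL$-free subset of $[n]$ and that the map $I \mapsto T_I$ is injective.

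First I would fix the set $D := [n] \setminus (S \cup C)$ and, for each maximal independent set $I$ of $H$, greedily extend $S \cup I$ to a maximal $\LL$-free subset $T_I$ of $[n]$, insisting that the extension uses only elements of $D$ — i.e. we never add any element of $C \setminus I$. We need to check this is possible: starting from $S\cup I$, repeatedly add an element of $D$ that creates no non-trivial $\LL$-solution, until none can be added. This yields an $\LL$-free set $T_I$ with $S \cup I \subseteq T_I \subseteq S \cup I \cup D$, so that $T_I \cap C = I$. Since $S$ and $B$ (hence $S$ and $C$) are disjoint and $S$ itself is $\LL$-free, and $I$ is an independent set in $L_S[B]$ (so $S\cup I$ is $\LL$-free by the reasoning behind Lemma~\ref{L4}), the greedy process starts from a genuine $\LL$-free set. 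Then I would argue $T_I$ is in fact maximal among \emph{all} $\LL$-free subsets of $[n]$: any $x \in [n]\setminus T_I$ either lies in $D$, in which case $T_I \cup \{x\}$ contains a non-trivial $\LL$-triple by construction of $T_I$, or lies in $C \setminus I$, in which case — since $I$ is a \emph{maximal} independent set of $H$ — the vertex $x$ has a neighbour in $I$ or a loop in $H$, which by the definition of the link graph gives an element (or two elements) of $S \subseteq T_I$ forming a non-trivial $\LL$-triple with $x$; here I use the final remark in Section~\ref{secli} that all edges of the link graph correspond to non-trivial $\LL$-triples.

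Finally, for injectivity: if $I \neq I'$ are distinct maximal independent sets of $H$, then $T_I \cap C = I \neq I' = T_{I'} \cap C$, so $T_I \neq T_{I'}$. Hence the number of maximal $\LL$-free subsets of $[n]$ is at least the number of maximal independent sets of $H$, i.e. $f_{\max}(n,\LL) \geq \mathrm{MIS}(H)$.

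The main obstacle is the maximality verification in the case $x \in C \setminus I$: one must be careful that the neighbour (or loop) witnessing non-adjacency-failure in the induced subgraph $H$ genuinely corresponds to an $\LL$-triple using only elements that survive in $T_I$. This is exactly why we take the witness $z$ (or $z,z'$) from $S$ rather than from $B$ — the link graph places the ``external'' element in $S$, and $S \subseteq T_I$ always — and why it matters that $H$ is an \emph{induced} subgraph, so that maximality of $I$ in $H$ really does supply such a witness for every $x \in C \setminus I$. Everything else is routine.
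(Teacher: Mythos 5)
Your proof is correct. It uses exactly the same key observation as the paper's: because $I$ is a \emph{maximal} independent set in $H$, every $x\in C\setminus I$ has a neighbour in $I$ or a loop in $H$, and by the definition of the link graph this yields a non-trivial $\LL$-triple witnessed by elements of $S\cup I$. The paper packages this slightly differently — it takes arbitrary maximal $\LL$-free extensions of $S\cup I$ and of $S\cup J$ and argues directly that no single maximal $\LL$-free set can contain both $S\cup I$ and $S\cup J$, so the extensions are automatically distinct — whereas you construct a canonical extension $T_I$ confined to $D=[n]\setminus(S\cup C)$ so that $T_I\cap C=I$, trading effortless injectivity for the small extra step of verifying that $T_I$ is maximal in $[n]$ and not merely in $S\cup I\cup D$. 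Both are correct and of comparable length; the paper's version avoids having to constrain the extension, while yours makes the bijection with $C$-traces explicit, which can be a useful viewpoint if one later wants to count rather than merely lower-bound.
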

\begin{proof}
Suppose $I$ and $J$ are different maximal independent sets in $H$. First note that $S \cup I$ and $S \cup J$ are $\LL$-free by definition of the link graph. Both cannot lie in the same maximal $\LL$-free subset of $[n]$. To see this, observe by definition of $I$ and $J$, there exists $i \in I \setminus J$. There must exist $s \in S$, $j \in J$ such that $\{i,j,s\}$ forms an $\LL$-triple, else $J \cup \{i\}$ would be an independent set in $H$, which contradicts the maximality of $J$. Hence any maximal $\LL$-free subset of $[n]$ containing $S \cup J$ does not contain $i$. Similarly there exists $j \in J \setminus I$ such that any maximal $\LL$-free subset of $[n]$ containing $S \cup I$ does not contain $j$. The result immediately follows.
\end{proof}

%%%%%%%%%%%%%%%%%%%%%%%%%%%%%%%%%%%%%%%%%%%%%%
\section{The size of the largest solution-free set}\label{SLSF}
Throughout this section,
 $\LL$ will denote the equation $px+qy=z$ where $p\geq q$ and $p\geq 2$, $p,q \in \mathbb{N}$. 
The aim of this section is to determine the size of the largest $\LL$-free subset of $[n]$. In fact, we will prove a richer structural result on $\LL$-free sets (Theorem~\ref{structure}).
For this, we will introduce the following auxiliary graph $G_m$:
Let $m \in [n]$ be fixed. We define the graph $G_m$ to have vertex set $[m,n]$ and edges between $c$ and $pm+qc$ for all $c \in [m,n]$ such that $pm+qc \leq n$. We will also make use of these auxiliary graphs in Section~\ref{NSFS}.

\begin{fact}\label{F2}{\color{white}.}
\begin{itemize}
\item[(i)]{The size of the largest $\LL$-free subset $S$ of $[n]$ with $\min(S)=m$ is at most the size of the largest independent set in $G_m$ which contains $m$.} 
\item[(ii)]{The number of $\LL$-free subsets $S$ of $[n]$ with $\min(S)=m$ is at most the number of independent sets in $G_m$ which contain $m$.}
\end{itemize}
\end{fact}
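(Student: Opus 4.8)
The plan is to recognise the Fact as a direct translation between ``$\LL$-free, with prescribed minimum element'' and the combinatorics of the auxiliary graph $G_m$. Concretely, I would first prove the single claim that \emph{every $\LL$-free set $S\subseteq[n]$ with $\min(S)=m$ is an independent set of $G_m$ that contains the vertex $m$}. Granting this, both parts follow at once: the assignment $S\mapsto S$ is an injection from the collection of $\LL$-free subsets of $[n]$ with minimum $m$ into the collection of independent sets of $G_m$ containing $m$, which immediately yields the counting bound (ii), while comparing the largest member of the first collection with the largest member of the second yields the size bound (i).

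To establish the claim, let $S$ be $\LL$-free with $\min(S)=m$. Then $m\in S$ and $S\subseteq[m,n]=V(G_m)$, so the only thing to check is that $S$ spans no edge of $G_m$. Suppose it does; by the definition of $G_m$ such an edge joins some $c\in S\cap[m,n]$ to $pm+qc$, where $pm+qc\le n$ and $pm+qc\in S$. Then $(x,y,z)=(m,c,pm+qc)$ is a solution of $px+qy=z$ all of whose entries lie in $S$. Since $c\ge m$ and $p\ge 2$, $q\ge 1$, we have $pm+qc\ge(p+q)m>m$ and $pm+qc=pm+(q-1)c+c>c$, so the multiset $\{m,c,pm+qc\}$ takes at least two distinct values and in particular is not of the form $\{x,x,x\}$; hence this solution is non-trivial, contradicting that $S$ is $\LL$-free. (It is worth noting in passing that $G_m$ has no loops, since $c=pm+qc$ forces $(q-1)c=-pm$, impossible as the left-hand side is non-negative and the right-hand side is negative, so ``independent set of $G_m$'' carries its usual meaning; and in fact $\LL$ has no trivial solutions whatsoever, since the coefficient multiset $\{p,q,-1\}$ has no nonempty zero-sum sub-multiset when $q\ge 2$, and when $q=1$ its only one, $\{q,-1\}$, cannot be completed to a zero-sum partition of the three indices --- a cleaner way to see the contradiction, but more than is needed here.)

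I do not expect any genuine obstacle: the Fact is essentially a definition-chase, and the only step meriting care is verifying that the solution $(m,c,pm+qc)$ exhibited above is non-trivial, which is precisely where the hypotheses $p\ge 2$, $q\ge 1$ and the use of $m=\min(S)$ (forcing $m\in S$) enter. A final point worth recording is that the converse of the claim fails --- an independent set of $G_m$ containing $m$ need not be $\LL$-free, since $G_m$ only records $\LL$-triples that use $m$ as the variable $x$ --- so the bounds in (i) and (ii) are genuinely one-directional, which is all the Fact asserts and all that is used later.
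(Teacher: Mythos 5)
Your proof is correct and follows essentially the same route as the paper: show that every $\LL$-free $S\subseteq[n]$ with $\min(S)=m$ is an independent set of $G_m$ containing $m$, which immediately yields both bounds. Your extra remarks — explicitly checking non-triviality of $(m,c,pm+qc)$, observing $G_m$ is loopless, and noting the converse fails — are all accurate and merely make explicit what the paper leaves implicit.
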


\begin{proof}
Let $S$ be an $\LL$-free subset of $[n]$ with $\min(S)=m$. Since $\{m,c,pm+qc\}$ is an $\LL$-triple contained in $[n]$ for all $c \in [m,n]$ such that $pm+qc \leq n$, $S$ cannot contain both $c$ and $pm+qc$. Hence any $\LL$-free subset of $[n]$ with minimum element $m$ is also an independent set in $G_m$ which contains $m$ (although the converse does not necessarily hold). This immediately implies (i) and (ii).
\end{proof}

Note that $G_m$ is a union of disjoint paths (possibly isolated vertices). We refer to the connected components of $G_m$ as the \emph{path components}. Given $G_m$, we define $y_0:=n$, and for $i \geq 1$ define $y_i:= \max \{v \in V(G_m) | \, pm+qv \leq y_{i-1} \}$. Thus we have $y_i=\floor{\frac{y_{i-1}-pm}{q}}$. For $G_m$ we also define $k$ to be the largest $i$ such that $y_i \in [m,n]$, and refer to $k$ as the \emph{path parameter} of $G_m$. We define the \emph{size} of a path component to be the number of \emph{vertices} in it, and we define $N(G_m,i)$ to be the number of path components of size $i$ in $G_m$. 

\begin{fact}\label{F1} 
The graph $G_m$ consists entirely of disjoint path components, where for each $1 \leq i \leq k-1$ there are
$y_{i-1}+y_{i+1}-2y_i$ path components of size $i$, there are $y_{k-1}-2y_k+m-1$ path components of size $k$ and $y_k-m+1$ path components of size $k+1$. 
\end{fact}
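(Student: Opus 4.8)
The plan is to understand the structure of $G_m$ explicitly by tracking the "backwards orbit" of the endpoint $n$ under the map $v \mapsto \lfloor (v-pm)/q \rfloor$, i.e.\ the sequence $y_0 = n > y_1 > y_2 > \cdots > y_k$. Recall $G_m$ has vertex set $[m,n]$ with an edge between $c$ and $pm+qc$ whenever $pm+qc \le n$. A vertex $c$ has degree at most $2$: it has a "forward" neighbour $pm+qc$ when $pm+qc \le n$, and it has a "backward" neighbour $c'$ with $pm+qc' = c$ exactly when $c \equiv pm \pmod q$ and $(c-pm)/q \ge m$. Since every vertex has degree at most $2$ and the "forward" map is injective, $G_m$ is a disjoint union of paths (and isolated vertices), which justifies the first assertion.

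First I would determine, for a path component, what its vertices look like. Starting from any vertex and repeatedly applying the forward map $v \mapsto pm+qv$ we get a strictly increasing sequence that terminates; the last vertex $w$ of a component (the "top" of the path) is characterised by $pm + qw > n$, equivalently $w > (n-pm)/q$, i.e.\ $w \ge y_1 + 1$ (one must be slightly careful with the floor, but $y_1 = \lfloor (n-pm)/q\rfloor$ is exactly the largest $v$ with $pm+qv \le n$, so the tops of path components are precisely the vertices in $[y_1+1, n]$). More generally, a vertex $v$ is at distance exactly $j$ from the top of its path — i.e.\ the path, read from its top downwards, has $v$ as its $(j+1)$-st vertex — iff applying the forward map $j$ times lands in $[m,n]$ but applying it $j+1$ times exceeds $n$; by definition of the $y_i$ this happens iff $v \in (y_{i+1}, y_i]$ for the appropriate index, reading carefully. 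So the set of vertices that are the $(j+1)$-st vertex (counting from the top) of some path is exactly an interval of the form $(y_{j+1}, y_j] \cap [m,n]$.

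Next I would count. A path component has size (number of vertices) at least $i$ iff its top vertex, when pushed down $i-1$ steps, still stays in $[m,n]$; the tops that survive $i-1$ downward steps are exactly those in $(y_i, y_{i-1}]$ — wait, more precisely, a path of size $\ge i$ is one whose bottom vertex is the $i$-th from the top, so its top lies in the range $(y_{i-1}, y_{i-2}]$... I would instead count directly: the number of path components of size exactly $i$, for $1 \le i \le k-1$, equals (number of vertices that are the bottom of a path of size $\ge i$) minus (number that are bottom of a path of size $\ge i+1$). A vertex is the bottom of a path of size $\ge i$ iff it has a backward neighbour chain of length $i-1$... cleaner: the number of path components of size $\ge i$ equals the number of vertices $v$ such that $v$ is the $i$-th vertex from the top of its path, which is $|(y_i, y_{i-1}]| = y_{i-1} - y_i$ when this whole interval lies in $[m,n]$ (true for $i \le k-1$ since $y_{k-1} \ge m$). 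Hence $N(G_m, i) = (y_{i-1}-y_i) - (y_i - y_{i+1}) = y_{i-1} + y_{i+1} - 2y_i$ for $1 \le i \le k-1$. For size $k$: the number of components of size $\ge k$ is $y_{k-1} - y_k$, but now the interval $(y_k, y_{k-1}]$ is not entirely in $[m,n]$ only at the bottom — actually $y_k \ge m$ by definition of $k$ and $y_{k+1} < m$ (or $y_{k+1}$ undefined), so the components of size $\ge k+1$ are those with bottom vertex in $[m, y_k]$, giving $y_k - m + 1$ of them; thus $N(G_m,k) = (y_{k-1}-y_k) - (y_k - m + 1) = y_{k-1} - 2y_k + m - 1$. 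And $N(G_m, k+1) = y_k - m + 1$, with no larger components since $y_{k+1} < m$.

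I expect the main obstacle to be bookkeeping with the floor function and the boundary at $m$: one must check that the intervals $(y_i, y_{i-1}]$ really do record "distance from the top", that the map $v \mapsto \lfloor(v-pm)/q\rfloor$ composed appropriately agrees with iterating $v \mapsto pm+qv$ in the right direction (a vertex $v$ with $v \le y_{i}$ is guaranteed to have its $i$-fold forward image $\le n$, and this needs the monotonicity $y_i = \lfloor (y_{i-1}-pm)/q \rfloor$ to propagate), and that the endpoint index $k$ is handled so that no component of size $> k+1$ exists. None of this is deep, but the off-by-one care at both ends (the top, governed by $y_1$, and the bottom, governed by $m$ and $k$) is where the argument must be written precisely rather than hand-waved; I would organise it by first proving the clean claim "the vertices at distance $j$ below the top of their component are exactly $V(G_m) \cap (y_{j+1}, y_j]$" (with $y_{k+1}$ interpreted as $m-1$) and then reading all three counts off that claim by telescoping.
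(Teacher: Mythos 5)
Your argument is correct and matches the paper's proof: both partition $[m,n]$ into the intervals $(y_i, y_{i-1}]$ (the paper's $A_i$), identify these as precisely the vertices at depth $i$ from the top of their path component, and then telescope (number of components of size $\geq i$ equals $|A_i|$) to read off the counts. The only cosmetic difference is that you spell out explicitly why $G_m$ is a union of paths (max degree $2$, plus the forward map $c \mapsto pm+qc$ is strictly increasing so there are no cycles), which the paper leaves implicit; the counting step is identical.
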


\begin{proof}
Every vertex $c \in V(G_m)$ satisfying $y_{j+1} < c \leq y_{j}$ for some $0\leq j\leq k-1$ is in a path in $G_m$ which contains precisely $j$ vertices which are larger than it, whereas every vertex $c > y_j$ is not in such a path. All the vertices in $[m,y_k]$ are in paths which contain precisely $k$ vertices which are larger than it, all vertices in $[y_k+1,y_{k-1}]$ are in paths which contain precisely $k-1$ vertices which are larger than it, and so on. 

Let $A_i$ be the interval $[y_i+1,y_{i-1}]$ for $1\leq i\leq k$ and let $A_{k+1}$ be the interval $[m,y_k]$. There are $|[m,y_k]|=y_k-m+1$ path components of size $k+1$ in $G_m$. For $i\leq k$ all vertices in $A_i$ are the smallest vertex in a path on $i$ vertices, however they may not be the smallest vertex in their path component. In fact, by definition of the $y_i$, all paths which start in $A_j$ for some $j$ must include precisely one vertex from each set $A_{j-1},A_{j-2},\dots,A_1$. This means that for $i\leq k$, the number of path components of size $i$ in $G_m$ is precisely $|A_i|-|A_{i+1}|$. For $i\leq k-1$ this is $y_{i-1}+y_{i+1}-2y_i$ and for $i=k$ this is $y_{k-1}-2y_k+m-1$.
\end{proof}

We now use the graphs $G_m$ and the above facts to obtain the bound for the size of the largest $\LL$-free subset of $[n]$ as stated in Theorem~\ref{structure}.

{\noindent \bf{Proof of Theorem~\ref{structure}.}}
Let $t$ be a non-negative integer. To prove (i) suppose that $t < (\frac{p+q-1}{p+q+p/q})\floor{\frac{n}{p+q}}$. Suppose $S$ is an $\LL$-free set contained in $[\floor{\frac{n}{p+q}}-t,n]$ where $m:=\floor{\frac{n}{p+q}}-t \in S$. By Fact~\ref{F2}(i) we wish to prove that the largest independent set in $G_m$ containing $m$ has size at most $\ceil{\frac{(p+q-1)n}{p+q}}-\floor{\frac{p}{q}t}$. Since $|V(G_m)|=\ceil{\frac{(p+q-1)n}{p+q}}+t+1$ it suffices to show that any independent set $I$ in $G_m$ satisfies $|V(G_m) \setminus I| \geq \floor{(p+q)t/q}+1$.

For $0 \leq i \leq \floor{(p+q)t/q}$, there is an edge between $m+i$ and $(p+q)m+qi$. Note that since $i \leq \floor{(p+q)t/q}$ and $q \leq p$ we have that the largest vertex in any of these edges is indeed at most $n$: 

\begin{center}
$(p+q)(\floor{\frac{n}{p+q}}-t)+qi \leq n-(p+q)t+q\floor{(p+q)t/q} \leq n-(p+q)t+q(p+q)t/q=n$.
\end{center}

Since $I$ can only contain one vertex from each of these edges, we have proven (i), provided that these edges are disjoint. It suffices to show that $\floor{\frac{n}{p+q}}+\floor{pt/q} < (p+q)m=(p+q)(\floor{\frac{n}{p+q}}-t)$ since the left hand side is the largest element of the set $\{m+i : 0 \leq i \leq \floor{(p+q)t/q}\}$. But this immediately follows since $t < (\frac{p+q-1}{p+q+p/q})\floor{\frac{n}{p+q}}$.

%To prove (ii) let $t\geq (\frac{p+q-1}{p+q+p/q})\floor{\frac{n}{p+q}}$ and suppose $S$ is an $\LL$-free set in $[n]$ with $m:=\min(S)=\floor{\frac{n}{p+q}}-t$. We will define a graph $G$, such that $|S|$ is at most the size of the largest independent set in $G$. Once we have constructed $G$, the case $q=1$ follows easily. For $q\geq 2$, and some $k \in \mathbb{N}$, define $\ell :=\floor{k/2}$ and $$C_k:=\Bigg( \frac{\sum\limits_{i=0}^{2\ell+1} (-1)(-q)^{i}+p\sum\limits_{i=0}^{\ell} q^{2i}}{q^{2\ell+1}+p\sum\limits_{i=0}^{2\ell} q^i} \Bigg).$$ We will show that if $q \geq 2$ then the largest independent set in $G$ has size at most $C_k n+k$ for some $k \geq 2$. We then further bound this by $\frac{(q^2+1)n}{q^2+q+1}$ for $n$ sufficiently large.
To prove (ii) let $t\geq (\frac{p+q-1}{p+q+p/q})\floor{\frac{n}{p+q}}$ and suppose $S$ is an $\LL$-free subset of $[n]$ with $m:=\min(S)=\floor{\frac{n}{p+q}}-t$. By Fact~\ref{F2}(i) $|S|$ is at most the size of the largest independent set in $G_m$ which contains $m$. We will first show that  $G_m$ has path parameter $k\geq 2$, and then the case $q=1$ follows easily. Define $\ell:=\floor{k/2}$ and $$C_k:=\Bigg( \frac{\sum\limits_{i=0}^{2\ell+1} (-1)(-q)^{i}+p\sum\limits_{i=0}^{\ell} q^{2i}}{q^{2\ell+1}+p\sum\limits_{i=0}^{2\ell} q^i} \Bigg).$$ We will show that if $q \geq 2$ then the largest independent set in $G_m$ has size at most $C_k n+k$. We then further bound this from above by $(q^2+1)n/(q^2+q+1)$ for $n$ sufficiently large.
\smallskip

Note that by Fact~\ref{F1}, to prove that $k\geq2$ for $G_m$ it suffices to show that there is a path on 3 vertices in $G_m$. By definition of $k$, $m$ lies on a path $P$ on $k+1$ vertices. Write $P=v_0 v_1 \cdots v_k$ where $m=v_0$ and observe that $v_j=(q^j+p\sum\limits_{i=0}^{j-1} q^i)m$ for $0 \leq j \leq k$. To prove $k\geq 2$ it suffices to show that there is indeed a vertex $(q^2+pq+p)m$ in $V(G_m)$, i.e. $(q^2+pq+p)m \leq n$. Note that since $t\geq (\frac{p+q-1}{p+q+p/q})\floor{\frac{n}{p+q}}$, we have $m = \floor{\frac{n}{p+q}}-t \leq (\frac{p+q+p/q-p-q+1}{p+q+p/q})\floor{\frac{n}{p+q}} = (\frac{p+q}{q^2+pq+p})\floor{\frac{n}{p+q}}$. Hence $(q^2+pq+p)m \leq n$ as desired.
\smallskip

When $q=1$ observe that $y_i=y_{i-1}-pm$, so for $i \leq k-1$ by Fact~\ref{F1} we have $N(G_m,i)=y_{i-1}+y_{i+1}-2y_i=y_i+pm+y_i-pm-2y_i=0$. Hence $G_m$ consists entirely of a union of path components of size either $k$ or $k+1$. Since at most $\ceil{i/2}$ vertices of a path on $i$ vertices can be in an independent set and $k\geq 2$, the largest independent set in $G_m$ has size at most $2n/3=(q^2+1)n/(q^2+q+1)$ in this case, as desired. So now consider the case when $q \geq 2$. We calculate the maximum size of an independent set in $G_m$:

\begin{align*}
 & \sum\limits_{i=1}^{k+1} \ceil{i/2} \cdot N(G_m,i) \\ %= \bigg( \sum\limits_{i=1}^{k} \ceil{i/2} \cdot (|A_i|-|A_{i+1}|) \bigg) + \ceil{(k+1)/2} |A_{k+1}| \\
%= & \bigg( \sum\limits_{i=1}^{k-1} \ceil{i/2} \cdot (y_{i-1}+y_{i+1}-2y_i) \bigg) + \ceil{k/2} (|A_k|-|A_{k+1}|) + \ceil{(k+1)/2} |A_{k+1}| \\
= & \bigg( \sum\limits_{i=1}^{k-1} \ceil{i/2} \cdot (y_{i-1}+y_{i+1}-2y_i) \bigg) + \ceil{k/2} (y_{k-1}+m-1-2y_k) + \ceil{(k+1)/2} (y_k-m+1) \\
= & \, y_0 + \bigg( \sum\limits_{i=1}^{k} (\ceil{(i-1)/2}-2\ceil{i/2}+\ceil{(i+1)/2})y_i \bigg) + (m-1)(\ceil{k/2}-\ceil{(k+1)/2}). \numberthis \label{r1} \\
\end{align*}
%\COMMENT{Have removed the following two lines from the above, since Fact makes it clear...  $=(\sum\limits_{i=1}^{k} \ceil{i/2} \cdot (|A_i|-|A_{i+1}|)) + \ceil{(k+1)/2} |A_{k+1}|=(\sum\limits_{i=1}^{k-1} \ceil{i/2} \cdot (y_{i-1}+y_{i+1}-2y_i))+ \ceil{k/2} (|A_k|-|A_{k+1}|) + \ceil{(k+1)/2} |A_{k+1}|$}

Here we used Fact~\ref{F1} in the first equality. For $i$ odd, the coefficient of $y_i$ in (\ref{r1}) is $(i-1)/2-2(i+1)/2+(i+1)/2=-1$. For $i$ even, the coefficient of $y_i$ in (\ref{r1}) is $i/2-2i/2+(i+2)/2=1$. %Recall that $y_i > y_{i+1}$ so $-y_i+y_{i+1} \leq 0$ for all $i\geq 0$. 

The following bounds are obtained from the definition of $y_i$ and $k$: 

\begin{align*}
\rm (a) & \; \; \Big(n-q^j+1-pm\sum\limits_{i=0}^{j-1} q^i\Big)/q^j \leq y_j \leq \Big(n-pm\sum\limits_{i=0}^{j-1} q^i\Big)/q^j; \\
\rm (b) & \; \; n/\Big(q^{k+1}+p\sum\limits_{i=0}^{k} q^i\Big) < m \leq n/\Big(q^{k}+p\sum\limits_{i=0}^{k-1} q^i\Big). \\
\end{align*}\COMMENT{(a) from directly evaluating recursion definition of $\frac{y_{i-1}-pm-q+1}{q}\leq y_i=\floor{\frac{y_{i-1}-pm}{q}} \leq \frac{y_{i-1}-pm}{q}$. (b) $m \leq y_k \leq (n-pm\sum\limits_{i=0}^{k-1} q^i)/q^k$ therefore $(q^k+pm\sum\limits_{i=0}^{k-1} q^i)m \leq n$ giving RHS of (b). If $m \leq n/(q^{k+1}+p\sum\limits_{i=0}^{k} q^i)$ then $y_{k+1} \leq n$ so on a path of length $k+2$, a contradiction.}

Let $\ell:=\floor{k/2}$ (note $k\geq 2$ so $\ell\geq 1$). First suppose $k$ is odd, i.e. $k=2\ell +1$. Using (\ref{r1}), the size of the largest independent set in $G_m$ is bounded above by

\begin{align*}
& y_0 + \Big( \sum\limits_{i=1}^{k} (\ceil{(i-1)/2}-2\ceil{i/2}+\ceil{(i+1)/2})y_i \Big) + (m-1)(\ceil{k/2}-\ceil{(k+1)/2}) \\
= & \, y_0-y_1+y_2-y_3+\cdots+y_{2\ell}-y_{2\ell+1} \\
\stackrel{\rm{(a)}}{\leq} & \, n-\Big(\frac{n-pm-q+1}{q}\Big)+\Big(\frac{n-pm(1+q)}{q^2}\Big)-\Big(\frac{n-pm(1+q+q^2)-q^3+1}{q^3}\Big) \\
& +\cdots-\Bigg(\frac{n-\Big(pm\sum\limits_{i=0}^{2\ell} q^i\Big)-q^{2\ell+1}+1}{q^{2\ell+1}}\Bigg) \\
= & \, n\Big(1-\frac{1}{q}+\frac{1}{q^2}-\cdots-\frac{1}{q^{2\ell+1}}\Big)+m\Big(\frac{p}{q}+\frac{p}{q^3}+\cdots+\frac{p}{q^{2\ell+1}}\Big)+\frac{q-1}{q}+\frac{q^3-1}{q^3} +\cdots+ \frac{q^{2\ell+1}-1}{q^{2\ell+1}} \\
\stackrel{\rm{(b)}}{\leq} & \frac{n}{q^{2\ell+1}} \Big( \sum_{i=0}^{2\ell+1} (-1)(-q)^{i} \Big) +\Bigg(\frac{n}{q^{2\ell+1}+p\sum\limits_{i=0}^{2\ell} q^i}\Bigg)\Bigg(\frac{p\sum\limits_{i=0}^{\ell} q^{2i}}{q^{2\ell+1}}\Bigg)+\frac{k+1}{2} \\
= & \Bigg( \frac{\Big[\sum\limits_{i=0}^{2\ell+1} (-1)(-q)^i \Big] (q^{2\ell+1}+p\sum\limits_{i=0}^{2\ell} q^i)+p\sum\limits_{i=0}^{\ell} q^{2i}}{q^{2\ell+1}(q^{2\ell+1}+p\sum\limits_{i=0}^{2\ell} q^i)} \Bigg)n+\frac{k+1}{2} \\
= & \Bigg( \frac{\sum\limits_{i=0}^{2\ell+1} (-q)^{i+2\ell+1} +p\sum\limits_{i=0}^{\ell} q^{2i+2\ell+1}}{q^{2\ell+1}(q^{2\ell+1}+p\sum\limits_{i=0}^{2\ell} q^i)} \Bigg)n+\frac{k+1}{2} = \Bigg( \frac{\sum\limits_{i=0}^{2\ell+1} (-1)(-q)^{i}+p\sum\limits_{i=0}^{\ell}
q^{2i}}{q^{2\ell+1}+p\sum\limits_{i=0}^{2\ell} q^i} \Bigg) n+\frac{k+1}{2}\\
= & \, C_{k} n+\frac{k+1}{2} \leq C_{k} n+ k. \\
\end{align*}
\COMMENT{Note $p\sum\limits_{i=0}^{2\ell} q^i \sum\limits_{i=0}^{2\ell+1} (-1)(-q)^i=p(1+q+q^2+\cdots+q^{2\ell})(-1+q-q^2+\cdots+q^{2l+1}) =p(-1-q^2-q^4-\cdots-q^{2l}+q^{2l+1}+q^{2l+3}+\cdots+q^{4l+1})=p\sum\limits_{i=0}^{\ell} q^{2i+2l+1}-p\sum\limits_{i=0}^{\ell}q^{2i}$} 
(Note that some of our calculations above did indeed require $q\geq 2$.) By definition, $m \geq y_{k+1}+1$ and for $k$ even, we have $C_k = C_{k+1}$. So if $k$ is even ($k=2\ell$) then we have

\begin{align*}
& y_0 + \Big( \sum\limits_{i=1}^{k} (\ceil{(i-1)/2}-2\ceil{i/2}+\ceil{(i+1)/2})y_i \Big) + (m-1)(\ceil{k/2}-\ceil{(k+1)/2}) \\
= & \, y_0-y_1+y_2-y_3+...+y_{2\ell}-m+1 \leq y_0-y_1+y_2-y_3+...+y_{2\ell}-y_{2\ell+1} \\
\leq & \, C_{k+1} n+\frac{k+2}{2} \leq C_{k} n+k.\\
\end{align*}
The penultimate inequality follows by using calculations from the odd case. The last inequality follows since $k \geq 2$ and $C_k=C_{k+1}$. Thus we have shown that $|S| \leq C_k n+k$ and we know that $k \geq 2$. It remains to show that 

\begin{align}\label{n1}
C_k n+ k \leq \frac{(q^2+1)n}{q^2+q+1}
\end{align}
for $k \geq 2$ and $n$ sufficiently large.

%Rearranging the above, we wish to show
%\begin{align}
%n \geq \frac{k}{\frac{q^2+1}{q^2+q+1}-A_k}. \label{n1}
%\end{align}
%
We know that $m \leq n/(q^k+p\sum\limits_{i=0}^{k-1} q^i)$ and so $n \geq q^k+p\sum\limits_{i=0}^{k-1} q^i$, therefore condition (\ref{n1}) is met if

\begin{align}
\Big(\frac{q^2+1}{q^2+q+1}-C_k\Big)\Big(q^k+p\sum\limits_{i=0}^{k-1} q^i\Big) \geq k. \label{n2} 
\end{align}

\begin{claim}\label{claimA}
For $k\geq 6$, $(\ref{n2})$ holds.
\end{claim}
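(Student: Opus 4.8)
The plan is to estimate the quantity $\bigl(\tfrac{q^2+1}{q^2+q+1}-C_k\bigr)\bigl(q^k+p\sum_{i=0}^{k-1}q^i\bigr)$ from below and show it exceeds $k$ once $k\geq 6$. The key point is that $C_k$ converges to a limit as $k\to\infty$, and this limit is strictly less than $\tfrac{q^2+1}{q^2+q+1}$; so the first factor is bounded below by a positive constant (depending only on $p,q$), while the second factor grows like $q^k$, which beats the linear term $k$ for all sufficiently large $k$. The work is in making ``sufficiently large'' explicit as $k\geq 6$ uniformly in $p\geq q\geq 2$.

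First I would compute $\lim_{k\to\infty}C_k$. Recall $\ell=\floor{k/2}$ and
$$C_k=\frac{\sum_{i=0}^{2\ell+1}(-1)(-q)^i+p\sum_{i=0}^{\ell}q^{2i}}{q^{2\ell+1}+p\sum_{i=0}^{2\ell}q^i}.$$
Dividing numerator and denominator by $q^{2\ell+1}$ and letting $k$ (hence $\ell$) tend to infinity, the alternating sum $\sum_{i=0}^{2\ell+1}(-1)(-q)^i$ is dominated by its top term $-(-q)^{2\ell+1}=q^{2\ell+1}$, so the first numerator piece contributes $1$ in the limit; the piece $p\sum_{i=0}^{\ell}q^{2i}$ is dominated by $pq^{2\ell}$, contributing $p/q$; the denominator piece $p\sum_{i=0}^{2\ell}q^i$ is dominated by $pq^{2\ell}$, contributing $p/q$ as well. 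Hence $C_\infty:=\lim_k C_k=\dfrac{1+p/q}{1+p/q}=\dfrac{q+p}{q+p}$—wait, that gives $1$, so I must keep the subleading terms: more carefully, numerator $=q^{2\ell+1}+\Theta(q^{2\ell})$ and denominator $=q^{2\ell+1}+\tfrac{p}{q-1}q^{2\ell+1}/q^{\,}+\cdots$; the honest computation gives a limit of the form $\dfrac{q^3+1}{q^3+q^2+q}$ or similar, and the essential fact I will verify by direct algebraic comparison is that $C_k<\tfrac{q^2+1}{q^2+q+1}$ with a gap bounded below, e.g.\ $\tfrac{q^2+1}{q^2+q+1}-C_k\geq \tfrac{c}{q^3}$ for an absolute constant $c>0$ and all $k\geq 2$. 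This is a routine but slightly delicate estimate: bound $C_k$ above by evaluating the alternating sum exactly as $\tfrac{q^{2\ell+2}+1}{q+1}$ (geometric series) and the others as closed-form geometric sums, then cross-multiply.

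Given such a lower bound $\tfrac{q^2+1}{q^2+q+1}-C_k\geq \tfrac{c}{q^3}$, the left side of (\ref{n2}) is at least $\tfrac{c}{q^3}\bigl(q^k+p\sum_{i=0}^{k-1}q^i\bigr)\geq \tfrac{c}{q^3}q^k\geq c\,q^{k-3}$. For $k\geq 6$ and $q\geq 2$ this is at least $c\cdot 2^{k-3}$, and since $2^{k-3}\geq k$ for all $k\geq 6$ (as $2^3=8>6$, $2^4=16>7$, and the exponential dominates thereafter), we get $cq^{k-3}\geq ck\geq k$ provided $c\geq 1$; if the constant $c$ obtained is smaller than $1$, one instead notes $q^{k-3}$ already exceeds $k/c$ for $k\geq 6$ because $q^{k-3}/k$ is increasing in $k$ on this range and one checks $k=6$ by hand. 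Either way the inequality (\ref{n2}) follows.

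The main obstacle is purely computational: obtaining a clean, honest upper bound for $C_k$ (equivalently a clean positive lower bound for $\tfrac{q^2+1}{q^2+q+1}-C_k$) that is uniform in $k\geq 2$ and in $p\geq q\geq 2$, since $C_k$ depends on the parity of $k$ and on both parameters. I would handle the parity by treating $C_{2\ell}=C_{2\ell+1}$ (already noted in the text) so only odd indices need analysis, write every sum in closed geometric form, and then reduce (\ref{n2}) to a polynomial inequality in $q$, $p$, $k$ that can be checked by elementary monotonicity. Once the base case $k=6$ is verified and the left side of (\ref{n2}) is shown to be increasing in $k$ (the factor $q^k+p\sum q^i$ is increasing while $C_k$ is monotone toward its limit), the induction on $k$ closes the claim. $\square$
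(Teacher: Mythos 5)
Your central estimate --- that $\tfrac{q^2+1}{q^2+q+1}-C_k\geq c/q^3$ for an absolute constant $c>0$, uniformly in $k\geq 2$ and $p\geq q\geq 2$ --- is false, and the argument built on it collapses. To see why, first compute the actual limit: writing all sums in geometric closed form gives
\[
C_\infty=\lim_{k\to\infty}C_k=\frac{q}{q+1},\qquad\text{so}\qquad \frac{q^2+1}{q^2+q+1}-\frac{q}{q+1}=\frac{1}{(q+1)(q^2+q+1)}.
\]
However, $C_k$ is \emph{decreasing} in $k$ (this is what the paper's $D_1,\dots,D_4$ computation shows), so $C_k\geq C_\infty$ and therefore $\tfrac{q^2+1}{q^2+q+1}-C_k$ is \emph{smaller} than $\tfrac{1}{(q+1)(q^2+q+1)}$, not bounded below by a comparable quantity. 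Worse, for small $k$ the gap depends on $p$: a direct computation gives
\[
\frac{q^2+1}{q^2+q+1}-C_3=\frac{q^2+1}{(q^2+q+1)\bigl(q^3+p(q^2+q+1)\bigr)}=\Theta\!\left(\frac{1}{pq^2}\right),
\]
which tends to $0$ as $p\to\infty$ for fixed $q$. So no bound of the form $c/q^3$ with $c$ independent of $p$ can hold, and your lower bound $cq^{k-3}$ for the left side of~(\ref{n2}) is not justified.

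The deeper issue is that your decomposition is too lossy: the gap $\tfrac{q^2+1}{q^2+q+1}-C_k$ has the quantity $q^k+p\sum_{i=0}^{k-1}q^i$ essentially sitting in its denominator, so the product in~(\ref{n2}) experiences a cancellation that your two-sided estimate throws away. The paper handles this by not trying to bound the gap uniformly at all. Instead it verifies the base case $k=6$ by a direct polynomial computation, obtaining the left side of~(\ref{n2}) at least $p+3+p/q\geq 6$ (here the linear growth in $p$ of both the gap's denominator and of the second factor cancel), and then runs an induction on $k$ using only the monotonicity $C_{k}\geq C_{k+1}$ and the fact that the second factor grows by a multiplicative factor $\geq q\geq 2$ each step, giving $\geq qk\geq k+1$. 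Your closing remark about induction and monotonicity gestures at this, but your proposed route to the base estimate does not work, and you would need to redo the analysis tracking the $p$-dependence explicitly rather than absorbing it into an (incorrect) absolute constant.
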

Since the proof of Claim~\ref{claimA} is just a technical calculation, we defer it to the appendix.

The claim is not a result which generally holds for $2 \leq k \leq 5$ so instead we directly calculate \COMMENT{Put $k=\{2,3,4,5\}$ into $n\geq k/(\frac{q^2+1}{q^2+q+1}-C_k)$} how large $n$ should be to satisfy (\ref{n1}) in these cases. For $k=3$ and $k=5$ we obtain $n \geq \frac{3(q^3+p(q^2+q+1))(q^2+q+1)}{q^2+1}$ and $n \geq \frac{5(q^5+p(q^4+q^3+q^2+q+1))(q^2+q+1)}{q^4+(p-1)q^3+q^2+1}$ respectively. For $k=2$ and $k=4$ we obtain weaker bounds. Hence taking $n$ to be sufficiently large (larger than these two bounds), we have $C_k n+k \leq \frac{(q^2+1)n}{q^2+q+1}$ for all $k \geq 2$.

\endproof

%%%%%%%%%%%%%%%%%%%%%%%%%%%%%%
\section{The number of solution-free sets}\label{NSFS}
Recall  a theorem of Green~\cite{G-R} states that $f(n,\LL) = 2^{\mu_{\LL}(n)+o(n)}$ for any fixed homogeneous linear equation $\LL$.
The aim of this section is to replace the term $o(n)$ here with a constant  for many equations $\LL$. This will be achieved in Theorem~\ref{NSF3}, which immediately implies Theorem~\ref{number}. Denote by $f(n,\LL,m)$ the number of $\LL$-free subsets of $[n]$ with minimum element $m$. We first give bounds on $f(n,\LL,m)$ for linear equations $\LL$ of the form $px+qy=z$.

\begin{lemma}\label{NSF2} 
Let $\LL$ denote the equation $px+qy=z$ where $p\geq q$ and $p\geq 2$, $p,q \in \mathbb{N}$. 

\begin{enumerate}[(i)]
\item{If $m \geq \floor{\frac{n}{p+q}}+1$ then $f(n,\LL,m) = 2^{n-m}$.}
\item{If $m=\floor{\frac{n}{p+q}}$ then $f(n,\LL,m) \leq 2^{\mu_{\LL}(n)-1}$.}

\item{If $q \geq 2$, $m=\floor{\frac{n}{p+q}}-t$ for some positive integer $t$ and $G_m$ has path parameter $1$, then $f(n,\LL,m) \leq 2^{\mu_{\LL}(n)-3/5+t(3q-2p)/(5q)}$.}

\item{If $q \geq 2$, $m=\floor{\frac{n}{p+q}}-t$ for some positive integer $t$ and $G_m$ has path parameter $k \geq 2$, then $f(n,\LL,m) \leq  (4/3) \cdot 2^{(5q^2-2q+2)n/(5q^2)}$.}

\item{If $q=1$, $G_m$ has path parameter $\ell$, and $m=\floor{\frac{n}{\ell p+1}}-t$ for some integer $t$, then $f(n,\LL,m) \leq 2^{(7\ell p+3p)n/(10\ell p+10)+t(7-3p)/10}$. }

\end{enumerate}
\end{lemma}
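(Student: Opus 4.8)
My plan is to handle all five parts through Fact~\ref{F2}(ii), which bounds $f(n,\LL,m)$ by the number of independent sets of the auxiliary graph $G_m$ that contain $m$. Since $G_m$ is a disjoint union of paths whose component sizes are given explicitly by Fact~\ref{F1}, and since all edges of $G_m$ go from a smaller to a larger vertex, the minimum vertex $m$ is necessarily an \emph{endpoint} of its path component; thus I can count the relevant independent sets by multiplying over components via Fact~\ref{ISL}, using that a path on $i$ vertices has $F_{i+2}$ independent sets (with $F_1=F_2=1$), that $m$'s component, if of size $j$, contributes $F_j$ independent sets through $m$, and that every other size-$i$ component contributes at most $F_{i+2}$. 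I will write $m=\floor{\tfrac n{p+q}}-t$ and $n=(p+q)\floor{\tfrac n{p+q}}+r$ with $0\le r<p+q$, and use only the trivial bound $\mu_\LL(n)\ge n-\floor{\tfrac n{p+q}}$. The two numerical inputs doing most of the work will be $\log_2 3<8/5$ and $F_{j+2}\le 2^{(7j+3)/10}$ for all $j\ge0$ (a routine fact: checked directly for small $j$, and for large $j$ a consequence of Binet's formula since $\log_2\varphi<7/10$).

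Parts (i)--(iii) are short. For (i): if $x,y\ge m\ge\floor{\tfrac n{p+q}}+1$ then $px+qy\ge(p+q)m>n$, so $[m,n]$ contains no $\LL$-triple and the $\LL$-free sets with minimum $m$ are exactly the $2^{n-m}$ subsets of $[m,n]$ containing $m$. For (ii): $m=\floor{\tfrac n{p+q}}$ gives $(p+q)m\le n$, so $m$ has a neighbour in $G_m$, and an independent set through $m$ omits it, leaving at most $|V(G_m)|-2=n-m-1\le\mu_\LL(n)-1$ free vertices. For (iii): path parameter $1$ means every component has size $1$ or $2$ with $m$ in a size-$2$ one, so by Fact~\ref{F1} there are $y_1-m$ other size-$2$ components and $n-2y_1+m-1$ size-$1$ components, where $y_1=\floor{\tfrac{n-pm}q}=m+\floor{\tfrac{t(p+q)+r}q}$; hence $f(n,\LL,m)\le 3^{\,y_1-m}2^{\,n-2y_1+m-1}=2^{\,n-m-1-(y_1-m)(2-\log_2 3)}$, and using $y_1-m\ge\tfrac{t(p+q)}q-1\ge0$ together with $2-\log_2 3>\tfrac25$ this rearranges to the exponent $n-m-t-\tfrac35+\tfrac{t(3q-2p)}{5q}\le\mu_\LL(n)-\tfrac35+\tfrac{t(3q-2p)}{5q}$.

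For parts (iv) and (v) I would bound the number of independent sets through $m$ by $\prod_iF_{i+2}^{N(G_m,i)}$ and substitute the component counts from Fact~\ref{F1}. In (iv), $k\ge2$ so all sizes $1,\dots,k+1$ can occur; using $F_{i+2}\le 3^{i/2}$ for $i\ge2$ and $F_3=2$ gives the bound $2^{\,N_1+\frac{\log_2 3}2(n-m+1-N_1)}$, where $N_1=N(G_m,1)=n+y_2-2y_1$ counts the isolated vertices. The crux is controlling $N_1$: inserting $y_1\ge\tfrac{n-pm-(q-1)}q$, $y_2\le\tfrac{n-pm(q+1)}{q^2}$ and using that $k\ge2$ forces $m\le n/(q^2+p(q+1))$, hence $pm<\tfrac n{q+1}$, the estimate simplifies (all $(q-1)$ factors cancel) to $N_1<\tfrac{q-1}{q+1}n+\tfrac{2(q-1)}q$; the $n$-coefficient of the exponent then becomes $\tfrac{q-1+\log_2 3}{q+1}=1-\tfrac{2-\log_2 3}{q+1}$, which is below $\tfrac{5q^2-2q+2}{5q^2}=1-\tfrac{2(q-1)}{5q^2}$ since $\log_2 3<8/5$ gives $\tfrac{2-\log_2 3}{q+1}>\tfrac2{5(q+1)}\ge\tfrac{2(q-1)}{5q^2}$, while the leftover constant $(1-\tfrac{\log_2 3}2)\tfrac{2(q-1)}q<2-\log_2 3=\log_2\tfrac43$ fits under the $4/3$. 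In (v), $q=1$ forces $y_i=n-ipm$, so by Fact~\ref{F1} only sizes $\ell$ and $\ell+1$ occur; writing $n=(\ell p+1)(m+t)+s$ with $0\le s\le\ell p$ gives $N(G_m,\ell+1)=(\ell p+1)t+s+1$ and $N(G_m,\ell)=pm-N(G_m,\ell+1)$ (the total number of components is $pm$), and then the number of independent sets through $m$ is at most $F_{\ell+1}F_{\ell+2}^{N(G_m,\ell)}F_{\ell+3}^{N(G_m,\ell+1)-1}$ ($m$'s component, of size $\ell+1$, contributing the factor $F_{\ell+1}$); bounding $F_{\ell+1},F_{\ell+2},F_{\ell+3}$ by $2^{(7\ell-4)/10},2^{(7\ell+3)/10},2^{(7\ell+10)/10}$ the exponent reduces to $\tfrac1{10}\big((7\ell+3)pm+7N(G_m,\ell+1)-14\big)$, and substituting $pm=\tfrac{p(n-s)}{\ell p+1}-pt$ produces the $n$-term $\tfrac{(7\ell p+3p)n}{10(\ell p+1)}$, the $t$-term $\tfrac{t(7-3p)}{10}$, and a remainder $\tfrac{s(7-3p)}{10(\ell p+1)}-\tfrac7{10}$ which is negative (trivially for $p\ge3$, and for $p=2$ because $s\le\ell p$ makes the first term $<\tfrac1{10}$), yielding the stated bound.

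I expect the only genuine obstacles to lie in (iv) and (v): producing the closed forms for $N_1$ and for $N(G_m,\ell),N(G_m,\ell+1)$ and checking that the algebra collapses exactly as advertised, and verifying the uniform inequality $F_{j+2}\le 2^{(7j+3)/10}$; with Facts~\ref{F2} and~\ref{F1} in hand, everything else is a direct computation.
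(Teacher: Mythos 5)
Your proposal is correct and reaches the same conclusions, but two of the five parts are handled by arguments genuinely different from the paper's, and both are worth noting.

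Parts (i)--(iii) are essentially the paper's argument (the paper uses Lemma~\ref{ISL2} in (iii), you work directly from Fact~\ref{F1}, but it is the same count of independent sets meeting a matching of $y_1-m+1$ edges). Part (iv) is a genuine alternative: the paper telescopes the product $\prod_i F(i+2)^{N(G_m,i)}$ via the identity $F(i+2)F(i)-F(i+1)^2=(-1)^{i+1}$, reducing it to $2^{y_0}(3/4)^{y_1-y_2}$ and then lower-bounding $y_1-y_2$, whereas you eliminate the long-path contributions in one step with $F_{i+2}\le 3^{i/2}$ ($i\ge2$) and then bound only the isolated-vertex count $N_1=n+y_2-2y_1$. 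Your arithmetic for $N_1<\tfrac{q-1}{q+1}n+\tfrac{2(q-1)}q$ (using $m\le n/(q^2+pq+p)$, hence $pm<n/(q+1)$) and the comparison $\tfrac{q-1+\log_2 3}{q+1}<\tfrac{5q^2-2q+2}{5q^2}$ both check out, so this is a valid and slightly cleaner route than the paper's Fibonacci telescope. For part (v) you substitute $pm$ and $N(G_m,\ell+1)$ exactly and keep the remainder $\tfrac{s(7-3p)}{10(\ell p+1)}-\tfrac7{10}$, verifying it is $\le 0$ in the two cases $p\ge3$ and $p=2$; this is actually \emph{more} careful than the paper, which simply applies $m\le n/(\ell p+1)-t$ to the term $(3p-7)m$, a step whose direction is only valid when $3p-7\ge0$, i.e.\ $p\ge3$. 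Since the final bound is the same, the paper's lemma statement is fine for $p=2$ as well, but your version supplies the missing justification. One small point to keep explicit: the $t$ appearing in (v) should be noted to satisfy $t\ge0$ (as the paper observes, $m\le y_\ell=n-\ell pm$ forces $m\le\lfloor n/(\ell p+1)\rfloor$), since your expansion $n=(\ell p+1)(m+t)+s$ with $0\le s\le\ell p$ presupposes this.
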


\begin{proof}
First note that (i) is trivial since all  subsets $S\subseteq [n]$ with $\min (S) \geq \floor{\frac{n}{p+q}}+1$ are $\LL$-free. By Fact~\ref{F2}(ii) we know that $f(n,\LL,m)$ is at most the number of independent sets in $G_m$ which contain $m$. For (ii), there is one edge between $m=\floor{\frac{n}{p+q}}$ and $(p+q)m \leq n$ in $G_m$, hence there are at most $2^{n-\floor{\frac{n}{p+q}}-1}=2^{\mu_{\LL}(n)-1}$ independent sets in $G_m$ containing $m$. 

For (iii) suppose $q \geq 2$ and $m=\floor{\frac{n}{p+q}}-t$ for some  $t\in \mathbb N$. Notice that $G_m$ contains a matching on $y_1-m+1$ edges, namely there is 
an edge between $c$ and $pm+qc$ for $c \in [m,y_1]$. \COMMENT{Crucially these edges are disjoint since $y_2 \leq \frac{y_1-pm}{q} <m$ implies that $y_1 < (p+q)m$.} Observe that $3/4\leq 2^{-2/5}$ and also 
$$y_1-m = \left \lfloor \frac{n-pm}{q} \right \rfloor -m \geq \frac{n-(p+q)m-q}{q} \geq \frac{t(p+q)}{q}-1.$$ 
Hence by Lemma~\ref{ISL2} the total number of independent sets in  $G_m$ which contain $m$ is at most
\begin{align*}
& 2^{n-m-2(y_1-m)-1} 3^{y_1-m} \leq 2^{\mu_{\LL}(n)-1+t} (3/4)^{y_1-m} \\
& \leq   2^{\mu_{\LL}(n)-1+t} (3/4)^{t(p+q)/q-1} \leq 2^{\mu_{\LL}(n)-3/5+t(3q-2p)/(5q)},
\end{align*}
as desired.\COMMENT{Use $3/4=2^{\log_2(3/4)}$ instead: gives slightly better bound $2^{\mu_{\LL}(n)-1-\log_2(3/4)+t(1+\log_2(3/4)(p+q)/q)}$.}

For (iv) suppose $q \geq 2$, $m=\floor{\frac{n}{p+q}}-t$ for some positive integer $t$ and $G_m$ has path parameter $k\geq2$. First note that
\begin{align*}
y_1-y_2 = & \left \lfloor\frac{n-pm}{q}\right \rfloor -\left \lfloor\frac{\floor{\frac{n-pm}{q}}-pm}{q} \right \rfloor \geq \frac{n-pm-q}{q}-\frac{n-pm-qpm}{q^2} \\ 
= & \frac{(q-1)n+pm - q^2}{q^2} \geq \frac{(q-1)n}{q^2}-1. \\
\end{align*}

Define $F(i)$ to be the $i$th Fibonacci number where $F(1)=F(2)=1$. There are $F(i+2)$ independent sets (including the empty set) in a path of length $i$. Recall the following Fibonacci identity: $F(i+2)F(i)-F(i+1)^2=(-1)^{i+1}$. If $i$ is even and $a>b$ then 
$$ \bigg( \frac{F(i)F(i+2)}{F(i+1)^2} \bigg)^a \bigg( \frac{F(i+1)F(i+3)}{F(i+2)^2} \bigg)^b =  \bigg( \frac{F(i+1)^2-1}{F(i+1)^2} \bigg)^a \bigg( \frac{F(i+2)^2+1}{F(i+2)^2} \bigg)^b \leq 1.$$ 
\COMMENT{Induction}
Also observe that by omitting $(F(i+1)F(i+3)/F(i+2)^2)^b$ the inequality still holds. By use of Fact~\ref{F1} and applying the above bounds, we can bound from above the number of independent sets in $G_m$ as required:

\begin{align*}
& 2^{y_0+y_2-2y_1} 3^{y_1+y_3-2y_2} 5^{y_2+y_4-2y_3} \dots F(k+1)^{y_{k-2}+y_k-2y_{k-1}} F(k+2)^{y_{k-1}+m-2y_k-1} F(k+3)^{y_k-m+1} \\
= & \, 2^{y_0+y_2-2y_1} 3^{y_1-2y_2} 5^{y_2} \bigg(\frac{3\cdot 8}{5^2}\bigg)^{y_3} \bigg(\frac{5\cdot 13}{8^2}\bigg)^{y_4} \cdots \bigg(\frac{F(k+1)\cdot F(k+3)}{F(k+2)^2}\bigg)^{y_k} \bigg(\frac{F(k+2)}{F(k+3)}\bigg)^{m-1} \\ 
\leq & \, 2^{y_0+y_2-2y_1} 3^{y_1-2y_2} 5^{y_2} \leq 2^{y_0+y_2-2y_1+y_2} 3^{y_1-y_2} = 2^{y_0}(3/4)^{y_1-y_2} \leq 2^{n}(3/4)^{(q-1)n/q^2-1} \\
\leq & \, (4/3) \cdot 2^{n-2(q-1)n/(5q^2)} = (4/3) \cdot 2^{(5q^2-2q+2)n/(5q^2)}.
\end{align*}

For (v), since $y_i=n-ipm$ Fact~\ref{F1} implies that if $G_m$ has path parameter $\ell$, then $G_m$ is a union of paths of length $\ell$ and $\ell+1$. We use the bound $F(i) \leq 2^{(7i-11)/10}$ (a simple proof by induction which holds for $i\geq2$). Since $m<y_\ell=n-\ell pm$ we can write $m=\floor{\frac{n}{\ell p+1}}-t$ for some integer $t\geq 0$. Now using these bounds, we have

\begin{align*}
& F(\ell+2)^{y_{\ell-1}-2y_{\ell}+m} F(\ell+3)^{y_{\ell}-m} = F(\ell+2)^{(\ell p+p+1)m-n} F(\ell+3)^{n-(\ell p+1)m} \\
\leq & \, 2^{(3+7\ell)((\ell p+p+1)m-n)/10+(10+7\ell)(n-(\ell p+1)m)/10} = 2^{(7n+(3p-7)m)/10} \\
\leq & \, 2^{(7n+(3p-7)(n/(\ell p+1)-t))/10} = 2^{(7\ell p+3p)n/(10\ell p+10)+t(7-3p)/10}. 
\end{align*}
\end{proof}

\begin{thm}\label{NSF3} 
Let $\LL$ denote the equation $px+qy=z$ where $p,q \in \mathbb{N}$ and 
\begin{itemize}
\item[(i)] $q\geq 2$ and $p>q(3q-2)/(2q-2)$ or;
\item[(ii)] $q=1$ and $p\geq 3$. 
\end{itemize}
Then $f(n,\LL) \leq (3/2+o(1)+C) 2^{\mu_{\LL}(n)}$ where for (i) $C:=\frac{2^{-2p/(5q)}}{1-2^{(3q-2p)/(5q)}}$ and for (ii) $C:=\frac{2^{(7-3p)/10}}{1-2^{(7-3p)/10}}$.
\end{thm}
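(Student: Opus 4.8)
The strategy is to split the count $f(n,\LL)=\sum_{m=1}^{n} f(n,\LL,m)$ according to the minimum element $m$ of the $\LL$-free set, and to bound each term using the appropriate case of Lemma~\ref{NSF2}. The dominant contribution should come from $m\geq \floor{\frac{n}{p+q}}+1$, where Lemma~\ref{NSF2}(i) gives $f(n,\LL,m)=2^{n-m}$; summing the geometric series $\sum_{m\geq \floor{n/(p+q)}+1} 2^{n-m}$ yields exactly $2^{n-\floor{n/(p+q)}}-1 \leq 2^{\mu_\LL(n)}$ (using Corollary~\ref{cormain}, valid for $n$ large). The term $m=\floor{\frac{n}{p+q}}$ contributes at most $2^{\mu_\LL(n)-1}$ by Lemma~\ref{NSF2}(ii), giving the leading coefficient roughly $1+\tfrac12=\tfrac32$. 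Everything else — sets with $m=\floor{\frac{n}{p+q}}-t$ for $t\geq 1$ — must be shown to contribute only $O(2^{\mu_\LL(n)})$, in fact with a coefficient that sums to the constant $C$.

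\textbf{Handling the small-$m$ tail.} For $m=\floor{\frac{n}{p+q}}-t$ with $t\geq 1$, I would treat the cases according to the path parameter $k$ of $G_m$. When $k\geq 2$, Lemma~\ref{NSF2}(iv) (for $q\geq2$) or Lemma~\ref{NSF2}(v) with $\ell\geq 2$ (for $q=1$) gives a bound of the form $2^{\gamma n}$ with $\gamma<1-\frac{1}{p+q}$ under the hypotheses on $p,q$; since there are at most $n$ choices of $m$, the total such contribution is $n\cdot 2^{\gamma n}=o(2^{\mu_\LL(n)})$, absorbed into the $o(1)$. The delicate part is $k=1$ (resp.\ $\ell=1$): here Lemma~\ref{NSF2}(iii) (resp.\ (v) with $\ell=1$) gives $f(n,\LL,m)\leq 2^{\mu_\LL(n)}\cdot 2^{-c+t\cdot d}$ where, crucially, the hypothesis $p>q(3q-2)/(2q-2)$ (resp.\ $p\geq 3$) forces the per-step exponent $d=(3q-2p)/(5q)<0$ (resp.\ $(7-3p)/10<0$). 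Summing the resulting geometric series over $t\geq 1$ produces precisely the stated constant $C$: for case (i), $\sum_{t\geq 1} 2^{-3/5 + t(3q-2p)/(5q)}$, which one rewrites (pulling out the first term and reindexing) as $\frac{2^{-2p/(5q)}}{1-2^{(3q-2p)/(5q)}}$; for case (ii) similarly one gets $\frac{2^{(7-3p)/10}}{1-2^{(7-3p)/10}}$.

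\textbf{Assembling the bound.} Adding the three pieces gives
\[
f(n,\LL)\leq \underbrace{2^{\mu_\LL(n)}}_{m>\floor{n/(p+q)}}+\underbrace{2^{\mu_\LL(n)-1}}_{m=\floor{n/(p+q)}}+\underbrace{C\cdot 2^{\mu_\LL(n)}}_{k=1,\ t\geq1}+\underbrace{o(2^{\mu_\LL(n)})}_{k\geq2},
\]
which is $(3/2+C+o(1))2^{\mu_\LL(n)}$, as claimed. One must also check the bookkeeping at the boundary: in case (iii)/(v) the exponent $\mu_\LL(n)$ must be compared with $n-m$ correctly, and one should verify that $G_m$ having path parameter $1$ exactly corresponds to the range of $t$ where Lemma~\ref{NSF2}(iii) applies while larger $t$ pushes $k$ up — but this is exactly the dichotomy already set up in Lemma~\ref{NSF2}, so no new work is needed.

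\textbf{Main obstacle.} The only real subtlety is the geometric-series computation producing $C$: one has to be careful that the sum starts at $t=1$, that the ratio $2^{(3q-2p)/(5q)}$ (resp.\ $2^{(7-3p)/10}$) is genuinely less than $1$ under the stated hypothesis on $p,q$ (this is precisely where the numerical condition on $p$ enters), and that the first-term factor matches $2^{-2p/(5q)}$ (resp.\ $2^{(7-3p)/10}$) after simplification. Everything else is routine summation and an appeal to Corollary~\ref{cormain} to identify $n-\floor{n/(p+q)}$ with $\mu_\LL(n)$.
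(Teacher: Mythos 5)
Your proposal is correct and follows essentially the same strategy as the paper: decompose $f(n,\LL)$ by the minimum element $m$, apply Lemma~\ref{NSF2}(i)--(ii) to get the $3/2$ coefficient from $m\geq\floor{n/(p+q)}$, show the $k\geq 2$ (resp.\ $\ell\geq 2$) contribution is $o(2^{\mu_\LL(n)})$ using Lemma~\ref{NSF2}(iv)/(v), and sum the geometric series arising from the $k=1$ (resp.\ $\ell=1$) case via Lemma~\ref{NSF2}(iii)/(v) to obtain the constant $C$. The geometric-series simplification $2^{-3/5}\cdot\frac{2^{(3q-2p)/(5q)}}{1-2^{(3q-2p)/(5q)}}=\frac{2^{-2p/(5q)}}{1-2^{(3q-2p)/(5q)}}$ and the hypothesis $p>q(3q-2)/(2q-2)$ forcing both $|r|<1$ and the subexponential $k\geq 2$ contribution are exactly the paper's computations.
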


\begin{proof} For both cases by Lemma~\ref{NSF2}(i)--(ii) there are at most $3\cdot 2 ^{\mu _{\LL} (n) -1}$ $\LL$-free subsets $S$ of $[n]$ where $\min (S) \geq \floor{\frac{n}{p+q}}$. 
For (i), first consider $\LL$-free subsets arising from Lemma~\ref{NSF2}(iv). Since $k \geq 2$, 
$$m < y_2 = \bigg \lfloor \frac {\floor{\frac{n-pm}{q}}-pm}{q} \bigg \rfloor \leq \frac{n-pm-qpm}{q^2}$$ 
and so $m \leq n/(q^2+pq+p)$. Now as $n \rightarrow \infty$, $$ \frac{n/(q^2+pq+p) \cdot (4/3) \cdot 2^{(5q^2-2q+2)n/(5q^2)}}{2^{\mu_{\LL}(n)}} = \frac{2^{\log_2(4n/(3(q^2+pq+p)))+(5q^2-2q+2)n/(5q^2)}}{2^{\mu_{\LL}(n)}} \rightarrow 0,$$ as long as we have $2^{(5q^2-2q+2)n/(5q^2)} \ll 2^{\mu_{\LL}(n)}$. This is satisfied if $(5q^2-2q+2)/(5q^2) < (p+q-1)/(p+q)$ which when rearranged, gives $p>q(3q-2)/(2q-2)$. 

For $\LL$-free subsets arising from Lemma~\ref{NSF2}(iii), set $a:=2^{\mu_{\LL}(n)-3/5}$, $r:=2^{(3q-2p)/(5q)}$ and let $u$ be the largest $t$ such that $G_m$ with $m=\floor{\frac{n}{p+q}}-t$ has path parameter $1$. Then since  $p>q(3q-2)/(2q-2)>3q/2$ we have $|r|<1$ and so 
$$ \sum\limits ^{u}_{t=1} 2^{\mu_{\LL}(n)-3/5+t(3q-2p)/(5q)} \leq \sum\limits_{t=1}^{\infty} ar^t= \sum\limits_{t=0}^{\infty} (ar)r^t= \frac{ar}{1-r} = \frac{2^{\mu_{\LL}(n)-2p/(5q)}}{1-2^{(3q-2p)/(5q)}}.$$
Altogether this implies that $f(n,\LL) \leq (3/2+o(1)+C) 2^{\mu_{\LL}(n)}$ where $C:=\frac{2^{-2p/(5q)}}{1-2^{(3q-2p)/(5q)}}$.

For (ii), set $a:=2^{(7kp+3p)n/(10kp+10)}$, set $r:=2^{(7-3p)/10}$ and let $u$ be the largest $t$ such that $G_m$ with $m:=\floor{\frac{n}{p+q}}-t$ has path parameter $k$ for any fixed $k \in \mathbb N$. Since $p \geq 3$ we have $|r|<1$ and so 

$$ \sum\limits^{u}_{t=1} 2^{(7kp+3p)n/(10kp+10)+t(7-3p)/10} \leq \sum\limits_{t=1}^{\infty} ar^t= \sum\limits_{t=0}^{\infty} (ar)r^t= \frac{ar}{1-r} = \frac{2^{(7kp+3p)n/(10kp+10)+(7-3p)/10}}{1-2^{(7-3p)/10}}.$$

For $k=1$ the last term is at most $2^{(\mu_{\LL}(n)+(7-3p)/10)}/(1-2^{(7-3p)/10})$. For $k\geq 2$ we obtain a term which is $o(2^{\mu_{\LL}(n)})$ as $n$ tends to infinity, since $(7kp+3p)n/(10kp+10) < \mu_{\LL}(n)$ for $p\geq 3$. Therefore, Lemma~\ref{NSF2} implies that $f(n,\LL) \leq (3/2+o(1)+C) 2^{\mu_{\LL}(n)}$ where $C:=\frac{2^{(7-3p)/10}}{1-2^{(7-3p)/10}}$.

\end{proof}

%%%%%%%%%%%%%%%%%%%%%%%%%%%%

\section{The number of maximal solution-free sets}\label{secmax}

\subsection{A general upper bound}
Let $\LL$ be a three-variable linear equation.  Let $\mathcal{M}_{\LL} (n)$ denote the set of elements $x \in [n]$ such that $x \in [n]$ does not lie in \emph{any} $\LL$-triple in $[n]$. Define $\mu_{\LL}^*(n):=|\mathcal{M}_{\LL}(n)|$.
For example, if $\LL$ is translation-invariant then $\{x,x,x\}$ is an $\LL$-triple for all $x \in [n]$  so $\mathcal{M}_{\LL} (n)=\emptyset$ and $\mu_{\LL}^*(n)=0$.

Let $\LL$ denote the equation $px+qy=z$ where $p\geq 2$, $p \geq q$ and $p,q \in \mathbb{N}$. Write $u:=\gcd(p,q)$. Then notice that $\mathcal{M}_{\LL}(n)\supseteq \{s \in [n]: s>\floor{(n-p)/q}, u \nmid s\}$.
This follows since if $s> \floor{(n-p)/q}$ then $ps+q \geq qs+p > n$ and so $s$ cannot play the role of $x$ or $y$ in an $\LL$-triple in $[n]$. If $u \nmid s$ then as $u|(px+qy)$ for any $x,y \in[n]$ we have that $s$ cannot play the role of
$z$  in an $\LL$-triple in $[n]$. Actually, for large enough $n$ we  have $\mathcal{M}_{\LL}(n)=\{s: s>\floor{(n-p)/q}, u \nmid s\}$ for all such $\LL$. We omit the proof of this here.

\COMMENT{If $\gcd(p,q)=1$, then the largest $s$ such that there does not exist $a,b$ non-negative integers such that $pa+qb=s$ is $pq-(p+q)$. 
(J.J. Sylvester, Mathematical questions with their solutions, {\em Educational Times}, 41 (21), (1884).)
\\
Note that if $s>pq+p$ then certainly $pa+qb=s$ for non-negative $a,b$. Suppose $a=0$. Then $pq+q(b-p)=s$. So provided $b>p$ we are happy. But that is true by choice of $s$.
\\
 If $\gcd(p,q)=u$, the provided $s>(pq+p)/u$ we have that $pa/u+qb/u=s$. That is, for large enough $s$ where $u|s$ we can write $s$ in form $s=pa'+qb'$ for positive $a',b'$.
\\
Suppose $\gcd(p,q)=u$, $s>\floor{(n-p)/q}$ and $u$ divides $s$. If $n$ is sufficiently large, $s$ will be sufficiently large. Hence, we can write $s=pa'+qb'$ for positive $a',b'$. Hence $s$ is not in $\mathcal M_{\LL} (n)$ as desired.}

We now prove Theorem~\ref{max1}.

\begin{thm-hand}[\ref{max1}.]
Let $\LL$ be a fixed homogenous three-variable linear equation.  Then $$f_{\max}(n,\LL) \leq 3^{(\mu_{\LL}(n)-\mu_{\LL}^*(n))/3+o(n)}.$$
\end{thm-hand}
\begin{proof}
Let $\mathcal{F}$ denote the set of containers obtained by applying Lemma~\ref{L1}. Since every $\LL$-free subset of $[n]$ lies in at least one of the $2^{o(n)}$ containers, it suffices to show that every $F \in \mathcal F$ houses at most $3^{(\mu_\LL(n)-\mu_{\LL}^*(n))/3+o(n)}$ maximal $\LL$-free subsets. 

Let $F \in \mathcal F$. By Lemmas~\ref{L1}(i) and~\ref{L2}, $F=A \cup B$ where $|A|=o(n)$, $|B| \leq \mu_\LL(n)$ and $B$ is $\LL$-free. Note that we can add all the elements of  $\mathcal{M}_\LL(n)$ to $B$ (and thus $ F$) whilst ensuring that $|B| \leq \mu_\LL(n)$ and $B$ is $\LL$-free. So we may assume that  $\mathcal{M}_\LL(n) \subseteq B$.

 Each maximal $\LL$-free subset of $[n]$ in $F$ can be found by picking a subset $S \subseteq A$ which is $\LL$-free, and extending it in $B$. The number of ways of doing this is the number of ways of choosing the subset $S$ multiplied by the number of ways of extending a fixed $S$ in $B$, which we denote by $N(S,B)$. Since $|A|=o(n)$, there are $2^{o(n)}$ choices for $S$.
It therefore suffices to show that for any $S \subseteq A$, we have $N(S,B) \leq 3^{(\mu_\LL(n)-\mu_{\LL}^*(n))/3}$. 

Consider the link graph $G:=L_S[B]$. Then by definition, $\mathcal M_{\LL} (n)$ is an independent set in $G$. Thus, ${\rm MIS} (G)= {\rm MIS}(G\setminus \mathcal{M}_\LL (n))$.
Further, 
 Lemma~\ref{L4} and Theorem~\ref{L5}(i) imply that 
 $$N(S,B) \leq {\rm  MIS}(G) =  {\rm MIS}(G\setminus \mathcal{M}_\LL (n)) \leq 3^{|B\setminus \mathcal{M}_\LL(n)|/3} \leq 3^{(\mu_\LL(n)-\mu_{\LL}^*(n))/3},$$ as desired.
\end{proof}
As mentioned in the introduction, Theorem~\ref{max1} together with Theorem~\ref{Gr} shows that $f_{\max}(n,\LL)$ is significantly smaller than $f(n,\LL)$ for all homogeneous three-variable linear equations $\LL$ that are not translation-invariant. So in this sense it can be viewed as a generalisation of a result of \L{u}czak and Schoen~\cite{ls} on sum-free sets.

Let $\LL$ denote the equation $px+y=z$ for some $p \in \mathbb N$. Notice that in this case we have $\mu ^* _{\LL} (n)=0$ for $n >p$. The next result implies that if $p$ is large then $f_{\max}(n,\LL)$ is close to the bound in Theorem~\ref{max1}. So for such equations $\LL$, Theorem~\ref{max1} is close to best possible.
\COMMENT{AT: prop implies that given any $\eps >0$, if $p$ sufficiently large then have $f_{\max}(n,\LL) \geq 3^{(1+\eps)\mu_{\LL} (n)/3}$}
\begin{prop}\label{MSF2}
Given $p,n \in \mathbb N$ where $p \geq 2$, let $\LL$ denote the equation $px+y=z$. Then $$f_{\max}(n,\LL) \geq 3^{\mu_{\LL} (n)/3-2pn/(3(p+1)(3p^2-1))-p-5}.$$ 
\end{prop}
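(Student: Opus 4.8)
The plan is to use Lemma~\ref{L6}: it is enough to produce disjoint $\LL$-free sets $S,B\subseteq[n]$ and an induced subgraph $H$ of the link graph $L_S[B]$ whose number of maximal independent sets is at least the claimed quantity, since then $f_{\max}(n,\LL)\ge\mathrm{MIS}(H)$. First I would dispose of small $n$: for $n$ below a threshold $n_0(p)$ the exponent in the statement is non-positive, so the bound is trivial because $f_{\max}(n,\LL)\ge1$. Hence I may assume $n$ is large, in particular large enough that Corollary~\ref{cormain} applies and $\mu_\LL(n)=n-\lfloor n/(p+1)\rfloor$; I will use this identity throughout.

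For the sets I would take $B$ to be (essentially) the extremal interval $\bigl[\lfloor n/(p+1)\rfloor+1,\,n\bigr]$, which is $\LL$-free with $|B|=\mu_\LL(n)$, and I would take $S$ to be a small $\LL$-free set containing a ``sum-triple'' $\{a,b,a+b\}$. The role of such a triple is that $L_S[B]$ then contains, for each $m$ with $m,\,m+pa,\,m+p(a+b)\in B$, the triangle on $\{m,\,m+pa,\,m+p(a+b)\}$. A clean choice is $S=\{1,2\}$ (using $1+1=2$), which is $\LL$-free for every $p\ge2$; for $p\ge3$ one could even use $S=\{1,2,3\}$, but for $p=2$ the set $\{1,2,3\}$ is not $2x+y=z$-free, so a slightly larger explicit generator is needed there. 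Next I would record the entire edge set of $L_S[B]$: its edges are precisely the ``short'' edges $\{x,x+ps\}$ ($s\in S$) and the ``long'' edges $\{x,px+s\}$ ($s\in S$), the latter occurring only when $x$ lies near the bottom of $B$, and for $n$ large no vertex of $B$ carries a loop.

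The heart of the argument is then to choose $H$ to be a disjoint union of $T$ triangles together with isolated vertices, so that $\mathrm{MIS}(H)=3^{T}$ (the extremal case of Theorem~\ref{L5}(i)), giving $f_{\max}(n,\LL)\ge 3^{T}$. Concretely I would partition most of $B$ into consecutive blocks whose length is comparable to the denominator $3p^2-1$ appearing in the statement, and in each block select a collection of pairwise vertex-disjoint triangles of the above shape so that: (i) no edge of $L_S[B]$ joins two selected vertices lying in different triangles of the same block; (ii) the unused integers of a block buffer it against its neighbours, so that no edge runs between triangles of consecutive blocks; and (iii) no long edge joins two selected vertices. Making the per-block triangle count equal to $\tfrac{p^2-1}{3p^2-1}$ times the block length would yield
\[
T\;=\;\frac{p^2-1}{3p^2-1}\,|B|-O(p^2)\;=\;\frac{\mu_\LL(n)}{3}-\frac{2\,\mu_\LL(n)}{3(3p^2-1)}-O(p^2),
\]
and then $\mu_\LL(n)\le\frac{pn}{p+1}+1$ together with a careful accounting of the $O(p^2)$ term and of the rounding losses would give $T\ge\mu_\LL(n)/3-\frac{2pn}{3(p+1)(3p^2-1)}-p-5$, completing the proof.

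The step I expect to be the main obstacle is this block design: one must pack the triangles densely enough that $T$ comes within $O(1/p^2)$ of the ideal $|B|/3$, while keeping $H$ an honest disjoint union of triangles. These requirements conflict, since a tight packing of triangles of shape $\{0,p,2p\}$ creates ``chaining'' edges between neighbouring triangles (forcing $H$ to be a chain of triangles, which has strictly fewer maximal independent sets than a disjoint union), whereas spacing the triangles apart to break the chaining wastes integers and lowers $T$; and one must additionally route around the long edges emanating from the bottom of $B$. Pinning down the arrangement that realises exactly the defect $\tfrac{2}{3(3p^2-1)}$ is the delicate part. By comparison, the concluding arithmetic and the verification that $S$ and $B$ have the required $\LL$-freeness and disjointness are routine.
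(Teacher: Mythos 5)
Your outline has a genuine gap, and on closer inspection the route you sketch does not actually achieve the claimed density. With $S=\{1,2\}$ and $B$ the full extremal interval, the only $\LL$-triples $\{x,y,z\}$ with $x\in S$, $y,z\in B$ satisfy $z-y\in\{p,2p\}$, so \emph{every} triangle in $L_S[B]$ has the rigid shape $\{m,m+p,m+2p\}$, and the link graph decomposes by residue class mod $p$ into copies of the square of a path $P_k^2$ (edges at distance $1$ and $2$ in the reindexed AP). In $P_k^2$ a family of vertex-disjoint triangles with no edges between them uses at most $3$ out of every $5$ consecutive vertices (e.g. $\{0,1,2\},\{5,6,7\},\dots$), so the number of clean triangles you can extract is about $|B|/5$, not $|B|/3-O(|B|/p^2)$. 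The ``chaining'' problem you flag is therefore not a delicate packing issue to be finessed; it is an order-of-magnitude obstruction for this choice of $S$ and $B$, and the step you defer is precisely where the argument fails.

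The paper sidesteps this entirely by making $S$ \emph{large} and $B$ a carefully chosen subinterval rather than the whole extremal interval. It sets $s:=\lfloor (p-1)n/(3p^2-1)\rfloor$, $a:=\lfloor (n-s)/p\rfloor$, $S:=\{s,2s\}$ and $B:=[a+1,a+3ps]$. One checks $2s<a+1$ and $a+3ps\le n$, so $S$ and $B$ are disjoint $\LL$-free sets. The edges of $L_S[B]$ now connect $a+i$ with $a+i+ps$ and with $a+i+2ps$ for $i\in[ps]$ (and $a+i+ps$ with $a+i+2ps$), and because $a+1>(n-s)/p$ we have $p(a+1)+s>n$, so there are no ``long'' edges at all. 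Thus $L_S[B]$ is a disjoint union of exactly $ps$ triangles (plus four loops at $(p+1)s,(p+2)s,(2p+1)s,(2p+2)s$), giving immediately via Lemma~\ref{L6} that $f_{\max}(n,\LL)\ge 3^{ps-4}$, and the final arithmetic is the bound $ps-4\ge \mu_\LL(n)/3-2pn/(3(p+1)(3p^2-1))-p-5$. If you want to rescue your plan, the idea to internalise is that choosing $s$ of order $n/p$ (so the edge lengths $ps,2ps$ are a constant fraction of $|B|$) and then taking $|B|=3ps$ makes the three edge-length classes tile $B$ into $ps$ disjoint triples with no spillover, which is exactly the structure your block design was groping for.
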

\proof
Given $p,n \in \mathbb N$, let $\LL$ denote the equation $px+y=z$. Set $s:=\floor{\frac{(p-1)n}{3p^2-1}}$ and $a:=\floor{\frac{n-s}{p}}$. Consider the link graph $G:=L_{\{s,2s\}}[a+1,a+3ps]$. Observe that:
\begin{align*}
& 2s \leq \frac{(2p-2)n}{3p^2-1} < \frac{n}{p+1} < \frac{(3p-1)n}{3p^2-1} = \frac{n}{p}-\frac{(p-1)n}{3p^3-p} \leq \frac{n-s}{p} < a+1; \\
& a+3ps = \left \lfloor \frac{n-s}{p} \right \rfloor +3ps \leq \frac{n}{p}+\bigg(3p-\frac{1}{p}\bigg)s = \frac{n}{p} + \frac{3p^2-1}{p} \left \lfloor \frac{(p-1)n}{3p^2-1} \right \rfloor \leq \frac{n+n(p-1)}{p} =n.
\end{align*}

As a consequence, the sets $\{s,2s\}$ and $[a+1,a+3ps]$ (a subset of $[\floor{\frac{n}{p+1}}+1,n]$) are disjoint $\LL$-free sets in $[n]$, and so Lemma~\ref{L6} implies that $f_{\max}(n,\LL) \geq \,$\rm MIS\em $(G)$. \rm It remains to show that $G$ contains at least $3^{\mu_{\LL} (n)/3-2pn/(3(p+1)(3p^2-1))-6}$ maximal independent sets. 

Observe that for each $i \in [ps]$ there is an edge in $G$ between $a+i$ and $a+ps+i$ (since $\{s,a+i,a+i+ps\}$ is an $\LL$-triple), an edge between $a+i+ps$ and $a+i+2ps$ (since $\{s,a+i+ps,a+i+2ps\}$ is an $\LL$-triple) and an edge between $a+i$ and $a+i+2ps$ (since $\{2s,a+i,a+i+2ps\}$ is an $\LL$-triple). Also since $a>(n-s)/p-1$, we have $p(a+1)+s>n$ and hence there are no further edges in $G$.

Hence $G$ is a collection of $ps$ disjoint triangles, where 4 vertices in $G$ have loops ($(p+1)s$, $(p+2)s$, $(2p+1)s$ and $(2p+2)s$). So $G$ has at least $3^{ps-4}$ maximal independent sets. Now observe:

\begin{align*}
ps-4-\frac{\mu_\LL (n)}{3} = & \, p \left \lfloor \frac{(p-1)n}{3p^2-1} \right \rfloor -4 - \frac{n}{3} + \frac{1}{3} \left \lfloor \frac{n}{p+1} \right \rfloor \geq \bigg( \frac{p^2-p}{3p^2-1} - \frac{1}{3}+\frac{1}{3(p+1)} \bigg) n -p-5 \\
= & \bigg(\frac{-2p}{3(p+1)(3p^2-1)} \bigg)n-p-5, 
\end{align*}
as required.
\qed

%%%%%%%%%%%%
\subsection{Upper bounds for $px+qy=z$}

Let $\LL$ denote the equation $px+qy=z$ where $p\geq q$, $p\geq 2$ and $p,q \in \mathbb{N}$. For such $\LL$, the next simple result provides an alternative bound to Theorem~\ref{max1}.
\begin{lemma}\label{MSF3}
Let $\LL$ denote the equation $px+qy=z$ where $p\geq q$, $p\geq 2$ and $p,q \in \mathbb{N}$. Then $f_{\max}(n,\LL) \leq f(\floor{(n-p)/q},\LL)$.
\end{lemma}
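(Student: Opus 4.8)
The plan is to exhibit an injection (up to a $2^{o(n)}$ factor, or ideally exactly) from the family of maximal $\LL$-free subsets of $[n]$ into the family of $\LL$-free subsets of $[\floor{(n-p)/q}]$. The key observation is that every maximal $\LL$-free set $S \subseteq [n]$ must contain a large ``tail'' that is forced. Concretely, recall that an element $s > \floor{(n-p)/q}$ cannot play the role of $x$ or $y$ in any $\LL$-triple in $[n]$, since $ps+q \geq qs+p > n$. Hence the only way such an $s$ can appear in an $\LL$-triple is as $z$, i.e. as $z = px+qy$ with $x,y \leq \floor{(n-p)/q}$. This means: once we have fixed $S' := S \cap [\floor{(n-p)/q}]$, the set of ``forbidden'' large elements $z \in (\floor{(n-p)/q}, n]$ (those of the form $px+qy$ with $x,y \in S'$) is completely determined, and every large element \emph{not} forbidden \emph{must} lie in $S$ by maximality. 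Therefore $S$ is determined by $S' = S \cap [\floor{(n-p)/q}]$.

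**The main step** is then to argue that the map $S \mapsto S \cap [\floor{(n-p)/q}]$ is injective on maximal $\LL$-free subsets of $[n]$, and that its image consists of $\LL$-free subsets of $[\floor{(n-p)/q}]$ (which is immediate, since a subset of an $\LL$-free set is $\LL$-free, and being $\LL$-free is inherited by restriction to an initial segment — any $\LL$-triple inside $[\floor{(n-p)/q}]$ is also an $\LL$-triple in $[n]$). Injectivity follows from the paragraph above: if $S_1, S_2$ are maximal $\LL$-free with $S_1 \cap [\floor{(n-p)/q}] = S_2 \cap [\floor{(n-p)/q}] =: S'$, then for each $z \in (\floor{(n-p)/q}, n]$ we have $z \in S_i$ if and only if there is no solution $z = px+qy$ with $x,y \in S'$ — a condition not depending on $i$ — so $S_1 = S_2$. (One should double-check the edge case where $z$ could equal $px+qy$ with $x$ or $y$ itself larger than $\floor{(n-p)/q}$: this is impossible since then $px+qy \geq p\cdot 1 + q(\floor{(n-p)/q}+1) > n$, using $p \geq q$ and $p \geq 2$; similarly $z$ playing the role of $x$ or $y$ is ruled out as noted.) Hence $f_{\max}(n,\LL) \leq f(\floor{(n-p)/q},\LL)$.

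**I expect the main obstacle** to be the bookkeeping around the boundary: making sure that \emph{every} element of $[n]$ above $\floor{(n-p)/q}$ genuinely has its membership in a maximal $\LL$-free set forced by $S \cap [\floor{(n-p)/q}]$, and in particular that no $\LL$-triple in $[n]$ has two of its three entries above $\floor{(n-p)/q}$ (which would break the clean ``$z$-only'' dichotomy). The inequalities $ps + q > n$ and $qs + p > n$ for $s > \floor{(n-p)/q}$, together with $p \geq q$, handle this: in any $\LL$-triple $\{x,y,z\}$ with $px+qy=z$, at most one of $x,y,z$ can exceed $\floor{(n-p)/q}$, and if one does it is $z$. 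Once this is pinned down the rest is the straightforward injection argument above; no serious estimate is needed, which is why the statement is labelled a ``simple result.''
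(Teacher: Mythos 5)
Your proof is correct and takes essentially the same approach as the paper: both hinge on the observation that any $\LL$-triple $\{x,y,z\}$ in $[n]$ with $px+qy=z$ has $x,y\leq\lfloor(n-p)/q\rfloor$, so at most one element of any triple lies above $\lfloor(n-p)/q\rfloor$, and hence each $\LL$-free $S'\subseteq[\lfloor(n-p)/q\rfloor]$ admits at most one maximal extension to $[n]$. You present this as an explicit injectivity argument for the restriction map, whereas the paper packages the same fact via Lemma~\ref{L4} by noting that the link graph $L_{S'}[B]$ (with $B=[\lfloor(n-p)/q\rfloor+1,n]$) has only isolated vertices and loops and therefore a unique maximal independent set.
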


\begin{proof}
Set $C:=[\floor{\frac{n-p}{q}}]$ and $B:=[\floor{\frac{n-p}{q}}+1,n]$. In particular, $B$ is $\LL$-free. Notice that every maximal $\LL$-free subset of $[n]$ can be found by selecting an $\LL$-free subset $S \subseteq C$ and then extending it in $B$ to a maximal one. Suppose we have such an $\LL$-free subset $S$. 
By Lemma~\ref{L4}, the number of such extensions of $S$ is at most ${\rm{MIS}}(L_S[B])$.

For any $\LL$-triple $\{x,y,z\}$ in $[n]$ satisfying $px+qy=z$, since $z \leq n$, we must have $x \leq \frac{n-q}{p}$ and $y \leq \frac{n-p}{q}$. Hence $x,y \in C$. This means that there are no $\LL$-triples in $[n]$ which contain more than one element from $B$. Thus the link graph $L_S[B]$ must only contain isolated vertices and loops. So $L_S[B]$ has precisely one maximal independent set. Hence the number of maximal $\LL$-free subsets of $[n]$ is bounded by the number of choices of $S$ in $C$ which are $\LL$-free, i.e. 
$f(\floor{(n-p)/q},\LL)$.
\end{proof}
Lemma~\ref{MSF3} together with Theorems~\ref{number} and~\ref{Gr} immediately implies Theorem~\ref{max3}.

%For some equations $\LL$, Theorem~\ref{max1} yields a better bound on $f_{\max}(n,\LL)$ than Corollary~\ref{MSF4}. For example, let $\LL$ be defined as $2x+2y=z$. Then $\mu _{\LL} (n) \approx 3n/4$, $\mu _{\LL} (\floor{(n-2)/2}) \approx 3n/8$ and $\mu ^* _{\LL} (n) \approx n/4$.
%So Theorem~\ref{max1} implies that $f_{\max}(n,\LL)\leq 3^{n/6+o(n)}$ whilst Corollary~\ref{MSF4} only implies that $f_{\max}(n,\LL)\leq 2^{3n/8+o(n)}$.
%On the other hand, comparing the two results, it is easy to see that for many $\LL$, Corollary~\ref{MSF4} will yield a better bound. 

The next result gives a further upper bound on $f_{\max}(n,\LL)$ for certain linear equations $\LL$. Notice that for such $\LL$, Theorem~\ref{max2} yields a better bound than Theorem~\ref{max1}.
% (although for some $\LL$, Corollary~\ref{MSF4} yields a better bound
%than Theorem~\ref{max2}).

\begin{thm-hand}[\ref{max2}.]
Let $\LL$ denote the equation $px+qy=z$ where $p \geq q \geq 2$ are integers so that $p \leq q^2-q$ and $\gcd (p,q)=q$.  Then $$f_{\max}(n,\LL) \leq 2^{(\mu_{\LL}(n)-\mu_{\LL}^*(n))/2+o(n)}.$$
\end{thm-hand}

\begin{proof}
%Suppose that $n$ is sufficiently large.
 Let $\mathcal{F}$ denote the set of containers obtained by applying Lemma~\ref{L1}. Since every $\LL$-free subset of $[n]$ lies in at least one of the $2^{o(n)}$ containers, it suffices to show that every $F \in \mathcal F$ houses at most $2^{(\mu_{\LL}(n)-\mu_{\LL}^*(n))/2+o(n)}$ $\LL$-free sets. 

Let $F \in \mathcal F$. By Lemmas~\ref{L1}(i) and~\ref{L2}, $F=A \cup B$ where $|A|=o(n)$, $|B| \leq \mu_\LL(n)$ and $B$ is $\LL$-free. Note that we can add all the elements of  $\mathcal{M}_\LL(n)$ to $B$ (and thus $ F$) whilst ensuring that $|B| \leq \mu_\LL(n)$ and $B$ is $\LL$-free. So we may assume that  $\mathcal{M}_\LL(n) \subseteq B$.
By Theorem~\ref{structure},  $\min(B)=\floor{\frac{n}{p+q}}-t$ for some non-negative integer $t < (\frac{p+q-1}{p+q+p/q})\floor{\frac{n}{p+q}}$ and $|B| \leq \ceil{\frac{(p+q-1)n}{p+q}}-\floor{\frac{p}{q}t}$, or $|B| \leq \frac{(q^2+1)n}{q^2+q+1}$. 

\smallskip

\noindent
{\bf Case 1}: $\min(B)=\floor{\frac{n}{p+q}}-t$ for $0 \leq t < (\frac{p+q-1}{p+q+p/q})\floor{\frac{n}{p+q}}$. Write $F=X \cup Y$ where $Y \subseteq [\floor{\frac{n}{p+q}}+1,n]$ is $\LL$-free, and $X \subseteq [1,\floor{\frac{n}{p+q}}]$. 
Note that $|X|=t'+o(n)$  and $|Y| \leq \ceil{\frac{(p+q-1)n}{p+q}}-\floor{\frac{p}{q}t}-t'+o(n)$ where $t' \leq t$. Also $\mathcal M_{\LL} (n) \subseteq Y$. Choose $S \subseteq X$ to be $\LL$-free. Consider the link graph $L_{S}[Y]$ and observe that by Lemma~\ref{L4}, $N(S,Y)\leq {\rm MIS}(L_S[Y])$.
(Recall $N(S,Y)$ denotes the number of extensions of $S$ in $Y$ to a maximal $\LL$-free set.)

Since $p \leq q^2-q$, by Lemma~\ref{L3} $L_{S}[Y]$ is triangle-free. 
By definition, $\mathcal M_{\LL} (n)$ is an independent set in $L_S[Y]$ and so  MIS($L_{S}[Y]$)$=$MIS($L_{S}[Y\setminus \mathcal{M}_{\LL}(n)]$). Therefore Theorem~\ref{L5}(ii) implies that MIS($L_{S}[Y]$)$ \leq 2^{(|Y|-|\mathcal{M}_{\LL} (n)|)/2}$. Overall, this implies that the number of $\LL$-free sets  contained in $F$  is at most $$2^{|X|} \times 2^{(|Y|-|\mathcal{M}_{\LL}(n)|)/2} \leq 2^{t'+o(n)+(\mu_{\LL}(n)-\mu_{\LL}^*(n)-\floor{\frac{p}{q}t}-t')/2} \leq 2^{(\mu_{\LL}(n)-\mu_{\LL}^*(n))/2+o(n)},$$
as desired. 

\smallskip

\noindent
{\bf Case 2}: $|B| \leq \frac{(q^2+1)n}{q^2+q+1}$. In this case $|F| \leq \frac{(q^2+1)n}{q^2+q+1}+o(n)$. Choose any $\LL$-free $S \subseteq A$ (note there are at most $2^{o(n)}$ choices for $S$). 
Consider the link graph $L_{S}[B]$ and observe by Lemma~\ref{L4} that $N(S,B)\leq {\rm MIS}(L_S[B])$. Similarly as in Case~1 we have that
 MIS($L_{S}[B]$)$=$MIS($L_{S}[B']$) where $B':=B\setminus \mathcal{M}_{\LL}(n)$. By Theorem~\ref{L5}(i),

%\begin{align*}
$${\rm{MIS}} {(L_{S}[B']) \leq 3^{|B'|/3} \leq 3^{((q^2+1)n/(3(q^2+q+1))-\mu_{\LL}^*(n)/3)} \leq 2^{(\mu_{\LL}(n)-\mu_{\LL}^*(n))/2+o(n)}}.$$
%\end{align*}
The last inequality  follows since $\mu_{\LL} (n)=n-\floor{n/(p+q)}$ and
 $\mathcal{M}_{\LL}(n)=\{s: s>\floor{(n-p)/q}, q \nmid s\}$ since $\gcd(p,q)=q$.

To see this, first note that $$\mu^*_{\LL}(n) = \frac{(q-1)^2 n}{q^2}-o(n).$$ Hence for the inequality to hold we require that $$9^{((q^2+1)/(q^2+q+1)-(q^2-2q+1)/(q^2))} < 8^{((p+q-1)/(p+q)-(q^2-2q+1)/(q^2))}.$$ Let $a:=\log_{9}8$. This rearranges to give $$p> \frac{(1-a)(q^4-q)+q^3+q^2}{(2a-1)q^3+(a-1)(q^2+q-1)}.$$ Since $p \geq q$ it suffices to show that $(3a-2)q^3+(a-2)(q^2+q)+(2-2a)>0$. This indeed holds since $q \geq 2$. 

Overall, this implies that the number of $\LL$-free sets  contained in $F$  is at most $2^{(\mu_{\LL}(n)-\mu_{\LL}^*(n))/2+o(n)},$ as desired.
\end{proof}
The proof of Theorem~\ref{max2} actually generalises to some other equations $px+qy=z$ where $\gcd(p,q) \not = q$ (but still $p \leq q^2-q$). However, in these cases Theorem~\ref{max3} produces a better upper bound on $f_{\max} (n, \LL)$. The next result summarises when
Theorem~\ref{max1},~\ref{max2} or~\ref{max3} yields the best upper bound on $f_{\max} (n, \LL)$. We defer the proof to the appendix.

\begin{prop}\label{best}
Let $\LL$ denote the equation $px+qy=z$ where $p\geq q$, $p\geq 2$ and $p,q \in \mathbb{N}$. 
Up to the error term in the exponent, the best upper bound on $f_{\max}(n,\LL)$ given by Theorems~\ref{max1},~\ref{max2} and~\ref{max3}  is:

\begin{enumerate}[(i)]
\item{$f_{\max}(n,\LL) \leq 3^{(\mu_{\LL}(n)-\mu_{\LL}^*(n))/3+o(n)}$ if $\gcd(p,q)=q$, $p\geq q^2$, and either $q\leq 9$ or $10\leq q \leq 17$ and $p<(a-1)(q^2-q)/(q(2-a)-1)$ where $a:=\log_3(8)$;}
\item{$f_{\max}(n,\LL) \leq 2^{(\mu_{\LL}(n)-\mu_{\LL}^*(n))/2+o(n)}$ if $\gcd(p,q)=q$ and $p \leq q^2-q$;}
\item{$f_{\max}(n,\LL) \leq 2^{\mu_{\LL}(\floor{(n-p)/q})+o(n)}$ otherwise.}
\end{enumerate}
\end{prop}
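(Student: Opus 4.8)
The plan is to reduce Proposition~\ref{best} to a comparison of the leading coefficients of the three exponents, viewed as linear functions of $n$, and then do the (elementary) algebra. Write $u:=\gcd(p,q)$. By Corollary~\ref{cormain}, $\mu_{\LL}(n)=n-\floor{n/(p+q)}=\tfrac{p+q-1}{p+q}\,n+O(1)$; applying this with $n$ replaced by $\floor{(n-p)/q}=\tfrac nq+O(1)$ gives $\mu_{\LL}(\floor{(n-p)/q})=\tfrac{p+q-1}{q(p+q)}\,n+o(n)$. Using the description $\mathcal M_{\LL}(n)=\{s>\floor{(n-p)/q}:u\nmid s\}$, valid for large $n$, a density count gives $\mu_{\LL}^*(n)=\tfrac{(u-1)(q-1)}{uq}\,n+o(n)$. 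Hence the exponents in Theorems~\ref{max1},~\ref{max2},~\ref{max3} equal $c_1n+o(n)$, $c_2n+o(n)$, $c_3n+o(n)$, where, writing $\rho:=\tfrac{p+q-1}{p+q}-\tfrac{(u-1)(q-1)}{uq}$ (which is $\geq0$ since $\mu_{\LL}^*(n)\leq\mu_{\LL}(n)$), we have $c_1=\tfrac{\log_2 3}{3}\rho$, $c_2=\tfrac12\rho$ and $c_3=\tfrac{p+q-1}{q(p+q)}$; recall $c_2$ is available only when Theorem~\ref{max2} applies, i.e.\ when $u=q$ and $p\leq q^2-q$. It then suffices to decide, in each regime, which available $c_i$ is smallest.

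Since $\tfrac12<\tfrac{\log_2 3}{3}$ and $\rho\geq0$ we always have $c_2\leq c_1$, so whenever Theorem~\ref{max2} is available it improves on Theorem~\ref{max1}. Suppose $u=q$ and $p\leq q^2-q$ (case~(ii)); then $\rho=\tfrac{p+q-1}{p+q}-\tfrac{(q-1)^2}{q^2}$, and $c_2\leq c_3$ rearranges to $q(p+q-1)(q-2)\leq(q-1)^2(p+q)$, which after expansion reduces to the true inequality $0\leq p+q(q-1)$. Hence in case~(ii) the bound of Theorem~\ref{max2} is (strictly) the best.

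It remains to compare $c_1$ and $c_3$ whenever Theorem~\ref{max2} is unavailable; this covers case~(i) and the rest of case~(iii), since there is no multiple of $q$ strictly between $q^2-q$ and $q^2$. If $q=1$ then $\mu_{\LL}^*(n)=0$, so $c_1=\tfrac{\log_2 3}{3}c_3<c_3$ and Theorem~\ref{max1} wins (and $q=1\leq 9$), so assume $q\geq 2$. Put $a:=\log_3 8=3/\log_2 3\in(1,2)$, so that $\tfrac{\log_2 3}{3}=1/a$. As $q-a>0$, clearing denominators shows $c_1\leq c_3$ is equivalent to $u(p+q-1)(q-a)\leq(u-1)(q-1)(p+q)$, i.e., collecting the terms linear in $p$,
\[
\big(u(1-a)+q-1\big)\,p\ \leq\ (q-1)(ua-q).
\]
If $u<q$ then $u$ is a proper divisor of $q$, so $u\leq q/2$; hence $u(1-a)+q-1>0$ (using $q\geq 2$), while $ua\leq\tfrac{q}{2}a<q$ since $a/2<1$, so the right-hand side above is negative. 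As $p>0$ this forces $c_1>c_3$, and Theorem~\ref{max3} gives the best bound --- this falls under case~(iii). If $u=q$ (so $q\mid p$, and $p\geq q^2$ since we are outside case~(ii)) the displayed inequality becomes $\big(q(2-a)-1\big)p\leq(a-1)(q^2-q)$. When $q\leq 9$ we have $q(2-a)-1<0$ but the right-hand side is positive, so $c_1\leq c_3$ for every $p$ and Theorem~\ref{max1} wins (the first alternative of~(i)). When $q\geq 10$ we have $q(2-a)-1>0$, so $c_1\leq c_3$ iff $p\leq(a-1)(q^2-q)/(q(2-a)-1)$; this threshold is at least $q^2$ exactly when $(2-a)q^2-aq+(a-1)\leq0$, a quadratic with roots $\approx0.5$ and $\approx17.2$. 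Thus for $10\leq q\leq 17$ we are in case~(i) precisely when $p<(a-1)(q^2-q)/(q(2-a)-1)$ and in case~(iii) otherwise, while for $q\geq 18$ the threshold lies below $q^2$, so $c_1>c_3$ and Theorem~\ref{max3} always wins. Assembling the sub-cases yields the proposition.

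The calculations are routine; the only slightly delicate point is the boundary analysis when $u=q$, $q\geq 10$, where one must check that $(2-a)q^2-aq+(a-1)$ is negative for all $q\in[10,17]$ but positive from $q=18$ on --- this is exactly what singles out the range \emph{$10\leq q\leq 17$} in part~(i). Using $a=1/\lambda$ with $\lambda=\tfrac{\log_2 3}{3}$, which places the exponents of $2$ and $3$ on the same footing, keeps the bookkeeping manageable, and one must also keep track of which of the three bounds is actually available in each regime (in particular that $u=q$ together with $p>q^2-q$ forces $p\geq q^2$).
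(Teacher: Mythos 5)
Your proposal is correct, and it uses the same underlying idea as the paper — compare the leading coefficients of the three exponents as linear functions of $n$ using $\mu_{\LL}(n)\sim\tfrac{p+q-1}{p+q}n$, $\mu_{\LL}(\floor{(n-p)/q})\sim\tfrac{p+q-1}{q(p+q)}n$ and $\mu_{\LL}^*(n)\sim\tfrac{(u-1)(q-1)}{uq}n$ — but your organization is noticeably cleaner than the paper's. Two small points where you streamline: (1) by packaging the exponents as $c_1=\rho/a$, $c_2=\rho/2$, $c_3$ with $a=\log_3 8$ and $\rho\geq 0$, the observation $c_2\leq c_1$ is immediate, so Theorem~\ref{max2} always dominates Theorem~\ref{max1} when it applies; the paper never states this explicitly and instead compares $c_3$ against $c_2$ as a proxy for $c_1$ in the $\gcd(p,q)\neq q$ subcase of (iii) (using $t\leq q/2$ and a separate inequality on $t$), which is a sufficient but indirect route. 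You instead compare $c_1$ and $c_3$ directly via the unified inequality $\bigl(u(1-a)+q-1\bigr)p\leq(q-1)(ua-q)$, whose right-hand side is automatically negative when $u\leq q/2$ while the left-hand side is positive; this makes the $u<q$ case fall out immediately. (2) The $u=q$ branch then reduces, exactly as in the paper, to the sign of $q(2-a)-1$ and the quadratic $(2-a)q^2-aq+(a-1)$, with roots near $0.5$ and $17.2$, giving the split at $q\leq 9$, $10\leq q\leq 17$, $q\geq 18$. The residual boundary ambiguity (strict versus non-strict at $p=(a-1)(q^2-q)/(q(2-a)-1)$) is present in both your write-up and the paper's, and is harmless given the ``up to the error term in the exponent'' framing.
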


\subsection{Lower bounds for $px+qy=z$}

%\subsection{Lower bounds for $px+qy=z$}
The following result provides lower bounds on $f_{\max}(n,\LL)$ for all equations $\LL$ of the form $px+qy=z$ where $p\geq q \geq 2$.

\begin{prop}\label{MSF6}
Let $\LL$ denote the equation $px+qy=z$ where $p\geq q \geq 2$ are integers. Suppose that $n> 2p$. In each case $f_{\max}(n,\LL) \geq 2^{\ell}$ where $\ell$ is defined as follows: %In each case the number of maximal $\LL$-free sets is at least $2^\ell$ where $\ell$ is defined as:
\begin{enumerate}[(i)]
\item{$\ell:=(n(q-1)-pq+q-2q^2)/q^2$ if $p\geq q^2$,}
\item{$\ell:=(n(p-q)-p^2+q^2-2pq)/(pq)$ if $q<p<q^2$,}
\item{$\ell:=(n-6q)/2q$ if $p=q$.}
\end{enumerate}
\end{prop}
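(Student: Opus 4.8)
The plan is to prove each lower bound in Proposition~\ref{MSF6} by exhibiting, for a suitable choice of disjoint $\LL$-free sets $S$ and $B \subseteq [\floor{n/(p+q)}+1,n]$, an induced subgraph $H$ of the link graph $L_S[B]$ which is a disjoint union of at least $\ell$ edges (plus possibly isolated vertices and loops), so that $\mathrm{MIS}(H) \geq 2^\ell$; Lemma~\ref{L6} then gives $f_{\max}(n,\LL) \geq 2^\ell$. Thus the entire task reduces to the combinatorial bookkeeping of choosing the parameters and checking the arithmetic inequalities that guarantee (a) all the relevant triples are genuine $\LL$-triples lying inside $[n]$, (b) $S$ and $B$ are $\LL$-free and disjoint, and (c) the chosen edges of the link graph are pairwise vertex-disjoint so that the number of maximal independent sets multiplies.

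For case (i), when $p \geq q^2$, the natural choice is $S := \{s\}$ a singleton with $s$ small, and $B$ an interval just above $n/(p+q)$; an edge of $L_S[B]$ between $b$ and $pb+qs$ (or between $b$ and $qb+ps$) exists whenever both endpoints lie in $[n]$. I would pick $s$ roughly of order $n/q^2$ and the interval for $B$ of length about $n(q-1)/q^2$ so that the map $b \mapsto qb+ps$ pairs the bottom part of the interval with the top part, giving about $\ell = (n(q-1)-pq+q-2q^2)/q^2$ disjoint edges; one must check $pb+qs > n$ for all $b \in B$ (so no spurious edges of the other type spoil disjointness) and that $B$ itself is $\LL$-free (it lies above $n/(p+q)$). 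For case (ii), $q<p<q^2$, the relevant pairing is instead through $b \mapsto pb+qs$ with $s$ now of order $n/(pq)$ and $B$ an interval of length about $n(p-q)/(pq)$, yielding $\ell = (n(p-q)-p^2+q^2-2pq)/(pq)$; again the key is to rule out edges arising from the other role assignment. For case (iii), $p=q$, the equation is $q(x+y)=z$ and here a two-element set $S$ together with an interval $B$ produces, for each residue, an edge between $b$ and $z-b$ where $z = q(b + s)$ for $s \in S$, giving roughly $n/(2q)$ disjoint edges after discarding a bounded number of boundary vertices, hence $\ell = (n-6q)/(2q)$.

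Concretely I would, in each case, set $s$ (or the two elements of $S$) and the interval $B = [a+1, a+2\ell]$ (or $[a+1,a+\ell]$ for case (iii) where the pairing is $b \leftrightarrow z - b$) with $a := \floor{n/(p+q)}$ or a slight variant, then verify the three displayed inequalities of the form ``largest element of the relevant triple $\leq n$'' and ``$p \cdot (\text{smallest } b) + q s > n$'' exactly as in the proof of Proposition~\ref{MSF2}. The disjointness of the $\ell$ edges follows because each $b$ in the bottom half of $B$ is matched to a unique partner in the top half and, by the second inequality, to nothing else. Once $H$ is seen to be $\ell$ disjoint edges (plus isolated/looped vertices), $\mathrm{MIS}(H) = 3^{(\#\text{loop-free isolated})} \cdot 2^{\ell} \cdot (\dots) \geq 2^\ell$, and Lemma~\ref{L6} finishes the argument.

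The main obstacle I anticipate is purely the arithmetic: pinning down the interval endpoints and the value of $s$ so that the floor functions behave and the edge count comes out to \emph{exactly} the stated $\ell$ (including the additive constants like $-pq+q-2q^2$ and $-6q$), while simultaneously ensuring no triple of the ``wrong'' orientation creeps in to merge two of the intended edges. There is no conceptual difficulty beyond what already appears in the proof of Proposition~\ref{MSF2}; it is a matter of carefully tracking the three regimes $p \geq q^2$, $q < p < q^2$, $p = q$ separately, since the dominant coefficient ($q-1$ versus $p-q$ versus $1$) changes with the regime, reflecting which variable of $\LL$ the interval elements are forced to play.
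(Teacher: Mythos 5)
Your high-level strategy is the same as the paper's: apply Lemma~\ref{L6} with $S$ a small $\LL$-free set and $B\subseteq[\floor{n/(p+q)}+1,n]$, and exhibit inside the link graph $L_S[B]$ an induced matching with $\ell$ edges, each contributing a factor of $2$ to the count of maximal independent sets. That much is correct, and your asymptotic estimates for the number of edges also match the stated $\ell$.

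However, the specific construction you describe cannot work, and the defect is geometric, not merely a matter of "pinning down floor functions" as you suggest. In case (i) you take $S=\{s\}$ with $s$ of order $n/q^2$ and a short interval $B$ of length roughly $n(q-1)/q^2$ just above $n/(p+q)$, with edges $\{b,\,qb+ps\}$. But with $p\geq q^2$ we already have $ps\gtrsim n$, so $qb+ps>n$ for every $b\in B$: no such edge exists in $L_S[B]$. Even if one shrinks $s$ just enough to satisfy $qb+ps\leq n$, then $qb+ps\approx qn/(p+q)+pn/(p+q)=n$, so the partner of $b$ sits near $n$, far outside any short interval around $n/(p+q)$ — the map $b\mapsto qb+ps$ cannot "pair the bottom part of the interval with the top part", because $q\geq 2$ stretches the interval. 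The same objection applies to your case (ii). In case (iii) your description of an edge "between $b$ and $z-b$ where $z=q(b+s)$" does not correspond to an edge of $L_S[B]$: for $p=q$ with $s<b_1<b_2$ the only possible $\LL$-triple $\{b_1,b_2,s\}$ is $q(b_1+s)=b_2$, i.e.\ the edge is $\{b_1,\,qb_1+qs\}$, not $\{b_1,\,z-b_1\}$; and there is no reason here to take $|S|=2$ (that is the device in Proposition~\ref{MSF2}, which produces triangles for a different equation).

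The fix, and what the paper actually does, is to fix $S=\{1\}$ in all three cases, take $B=[\floor{n/(p+q)}+1,n]$ and $G:=L_{\{1\}}[B]$, and choose an interval $I=[a,b]\subseteq B$ with $b:=\floor{(n-p)/q}$ and $a$ roughly $n/q^2$ (case (i)), $n/p$ (case (ii)) or $n/(2q)$ (case (iii)). Setting $J:=\{qi+p:i\in I\}$, one checks four inequalities: $qa+p>b$ (so $I\cap J=\emptyset$), $qb+p\leq n$ (so $J\subseteq[n]$), $pa+q>n$ (so the only edges at $I$ are of the form $\{i,qi+p\}$; this condition is not needed when $p=q$), and $p+q<a$ (to avoid loops; even if it fails one loses at most one edge). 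Then $G[I\cup J]$ is an induced matching on $|I|$ edges, and computing $|I|-1\geq\ell$ gives the bound. The key point you missed is that $J$ is a second interval near the top of $[n]$, disjoint from $I$, rather than the "top part" of the same short interval.
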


\begin{proof}
For each case, we shall let $B:=[\floor{\frac{n}{p+q}}+1,n]$, and consider the link graph $G:=L_{\{1\}}[B]$. Since $B$ and $\{1\}$ are $\LL$-free, by Lemma~\ref{L6} it suffices to show that there is an induced subgraph of $G$ which contains at least $2^{\ell}$ maximal independent sets. For each case we will find an induced perfect matching on $2\ell$ vertices in $G$. (Note there are $2^{\ell}$ maximal independent sets in such a matching.) 

More specifically, for each case we shall find an interval $I:=[a,b]$ for some $a,b \in V(G)$ and let $J:=\{qi+p | \, i \in I\}$. Note that all edges in $G$ (other than at most one loop) are of the form $\{i,qi+p\}$ and $\{i,pi+q\}$. By our choice of $I$ and $J$,  $G[I \cup J]$ will form a perfect matching on $2|I|$ vertices if the following conditions hold:

\begin{enumerate}[(1)]
\item{$qa+p>b$ (which ensures that $I \cap J = \emptyset$),}
\item{$qb+p\leq n$ (which ensures that $J\subseteq [n]$),}
\item{$pa+q>n$ (which ensures that the only edges in $G$ are of the form $\{i,qi+p\}$),}
\item{$p+q<a$ (which ensures that there is no loop at a vertex in $G[I\cup J]$).}
\end{enumerate}
Notice that actually we do not require condition (3) to hold in the case when $p=q$. Indeed, this is because in this case an edge $\{i,pi+q\}$ in $G$ is the same as the edge $\{i,qi+p\}$.
Further, there is at most one loop in $G$ (if $p+q\in B$). So even if (4) does not hold we will obtain an induced matching in $G$ on $2|I|-2$ vertices.

Thus, to obtain an induced matching in $G$ on $2|I|-2$ vertices it suffices to choose $a$ and $b$ so that (1)--(3) hold except when $p=q$ when we only require that (1) and (2) hold.

By choosing $b:=\floor{(n-p)/q}$, (2) holds since $qb+p=q\floor{(n-p)/q}+p \leq q(n-p)/q+p=n$. 

If $p\geq q^2$ then set $a:=\floor{(n-q)/q^2}+1$. Then $a \in B$ and
further $pa+q \geq q^2 a+q > q^2 ((n-q)/q^2) +q =n$ and $qa+p \geq qa+q^2 > q((n-q)/q^2)+q^2=n/q-1+q^2 > \floor{(n-p)/q}=b$. So (1) and (3) hold.

If $q<p<q^2$ then set $a:=\floor{(n-q)/p}+1$. So $a \in B$. Further, $pa+q > p((n-q)/p)+q=n$ and $qa+p > q((n-q)/p)+p = qn/p -q^2/p +p > qn/q^2-q+p > n/q > \floor{(n-p)/q}=b$. 
So (1) and (3) hold.

If $p=q$ set $a:=\floor{n/(p+q)}+1=\floor{n/(2q)}+1 \in B$. Observe that $qa+q>qn/2q+q>n/2> \floor{(n-q)/q}=b$ since $q \geq 2$. So (1) holds.

Now calculating the size of the interval $I=[a,b]$ in each case proves the result:

\begin{itemize}
\item{If $a=\floor{(n-q)/q^2}+1$, then $|I|-1=\floor{(n-p)/q}-(\floor{(n-q)/q^2}+1) \geq (n-p)/q-1-(n-q)/q^2-1 = (n(q-1)-pq+q-2q^2)/q^2$.}
\item{If $a=\floor{(n-q)/p}+1$, then $|I|-1=\floor{(n-p)/q}-(\floor{(n-q)/p}+1) \geq (n-p)/q-1-(n-q)/p-1 = (n(p-q)-p^2+q^2-2pq)/(pq)$.}
\item{If $a=\floor{n/(p+q)}+1$ then $|I|-1=\floor{(n-p)/q}-(\floor{n/(p+q)}+1) \geq (n-p)/q-1-n/(p+q)-1 = (pn-(p+2q)(p+q))/(q(p+q)) = (qn-6q^2)/(2q^2)=(n-6q)/2q$.}
\end{itemize}
\end{proof}

Although the lower bounds in Proposition~\ref{MSF6} do not meet the upper bounds in Theorems~\ref{max2} and~\ref{max3} in general,
 Theorem~\ref{max2} and Proposition~\ref{MSF6}(iii) do immediately imply the following asymptotically exact result.

\begin{thm}\label{MSF7}
Let $\LL$ denote the equation $2x+2y=z$. Then $f_{\max}(n,\LL)=2^{n/4+o(n)}$. 
\end{thm}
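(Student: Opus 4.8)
The plan is to obtain both bounds from results already proved in the excerpt: the upper bound $f_{\max}(n,\LL)\le 2^{n/4+o(n)}$ from Theorem~\ref{max2}, and the matching lower bound $f_{\max}(n,\LL)\ge 2^{n/4+o(n)}$ from Proposition~\ref{MSF6}(iii).

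First I would check that $\LL\colon 2x+2y=z$ satisfies the hypotheses of Theorem~\ref{max2}. Here $p=q=2$, so $p\ge q\ge 2$, we have $p=2\le 2=q^2-q$, and $\gcd(p,q)=2=q$. Hence Theorem~\ref{max2} applies and yields $f_{\max}(n,\LL)\le 2^{(\mu_\LL(n)-\mu_\LL^*(n))/2+o(n)}$. It then remains to evaluate the two quantities. By Corollary~\ref{cormain} (or directly), $\mu_\LL(n)=n-\floor{n/(p+q)}=n-\floor{n/4}=3n/4+o(n)$. For $\mu_\LL^*(n)$, since $\gcd(p,q)=q$ we have, for $n$ large, $\mathcal{M}_\LL(n)=\{s\in[n]:s>\floor{(n-2)/2},\ 2\nmid s\}$, i.e. the odd integers exceeding $\floor{(n-2)/2}$; counting these gives $\mu_\LL^*(n)=n/4+o(n)$ (equivalently $\mu_\LL^*(n)=(q-1)^2n/q^2-o(n)$ with $q=2$, as used in the proof of Theorem~\ref{max2}). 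Therefore $(\mu_\LL(n)-\mu_\LL^*(n))/2=(3n/4-n/4)/2+o(n)=n/4+o(n)$, which gives the upper bound.

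Next I would invoke Proposition~\ref{MSF6}(iii), whose hypothesis $p=q\ge 2$ holds with $p=q=2$, for the lower bound. It provides $f_{\max}(n,\LL)\ge 2^{\ell}$ with $\ell=(n-6q)/(2q)=(n-12)/4=n/4-3$, so $f_{\max}(n,\LL)\ge 2^{n/4-3}=2^{n/4+o(n)}$. Combining this with the upper bound yields $f_{\max}(n,\LL)=2^{n/4+o(n)}$, as claimed.

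The argument is essentially a substitution into existing results, so there is no real obstacle; the only point requiring a small computation is confirming the asymptotic value $\mu_\LL^*(n)=n/4+o(n)$. One should also note in passing that Theorem~\ref{max2} (through its use of Theorem~\ref{structure}) requires $n$ sufficiently large, but this is harmless for a statement with an $o(n)$ term in the exponent.
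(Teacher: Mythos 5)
Your proof is correct and follows exactly the paper's route: the paper likewise states that Theorem~\ref{max2} and Proposition~\ref{MSF6}(iii) immediately give the result, with the same substitution $p=q=2$ and the same asymptotics $\mu_\LL(n)=3n/4+o(n)$, $\mu_\LL^*(n)=n/4+o(n)$. Nothing is missing; you have merely written out the verification that the paper leaves implicit.
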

Since submitting this paper, we have also given a general upper bound on $f_{\max}(n,\LL)$ for equations $\LL$ of the form $px+qy=rz$ where $p\geq q\geq r$ are fixed positive integers (see~\cite{HT2}).
In particular, our result shows that in the case when $p=q \geq 2$, $r=1$ the lower bound in Proposition~\ref{MSF6}(iii) is correct up to an error term in the exponent.

%%%%%%%%%%%%%%%%
\section{Concluding remarks}
The results in the paper show that the parameter $f_{\max} (n,\LL)$ can exhibit very different behaviour depending on the linear equation $\LL$.
Indeed, Theorem~\ref{max1} gives a `crude' general upper bound on $f_{\max} (n,\LL)$ for all homogeneous three-variable linear equations $\LL$.
(It is crude in the sense that, in the proof, we do not use any structural information about the link graphs.)
However, this bound is close to the correct value of $f_{\max} (n,\LL)$ for certain equations $\LL$ (Proposition~\ref{MSF2}).
On the other hand, for many equations this bound is far from tight (Theorem~\ref{max2}). Further, for some equations ($x+y=z$ and $2x+2y=z$) the value of $f_{\max} (n,\LL)$ is tied to the property that any triangle-free graph on $n$ vertices contains at most $2^{n/2}$ maximal independent sets. Theorem~\ref{max3} and upper bounds we have obtained since submitting this paper (see~\cite{HT2}) suggest though that the value of $f_{\max} (n,\LL)$ for other equations $\LL$ may depend on completely different factors.
Further progress on understanding the possible behaviour of $f_{\max} (n,\LL)$ would be extremely interesting.

%In this paper we have only considered three-variable equations $\LL$. It is natural to seek bounds on $f_{\max}(n,\LL)$ for linear equations with more variables. In this case, one can define a link hypergraph $L_S[B]$ analogous to the notion of a link graph defined in Section~\ref{secli} (i.e. now hyperedges correspond to solutions to $\LL$ involving at least one element of $S$). However, we are not aware of any results about the number of maximal independent sets in (non-uniform) hypergraphs.
%Thus, it may prove harder to obtain results for such $\LL$ using a similar strategy to Theorems~\ref{max1} and~\ref{max2}. We remark though that the removal and container lemmas of Green~\cite{G-R} that we applied do hold for homogeneous linear equations on more than three variables. 

We conclude by briefly describing some results concerning equations with more than three variables. First observe the following simple proposition.

\begin{prop}\label{Mu1} 
Let $\LL_1$ denote the equation $p_1x_1+\dots +p_k x_k =b$ where $p_1, \dots ,p_k ,b \in \mathbb{Z}$ and let $\LL_2$ denote the equation $(p_1+p_2)x_1+p_3 x_2 +\dots +p_k x_{k-1} =b$. Then $\mu_{\LL_1}(n) \leq \mu_{\LL_2}(n)$.
\end{prop}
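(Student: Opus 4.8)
The plan is to show that any $\LL_1$-free subset of $[n]$ is also $\LL_2$-free, which immediately gives $\mu_{\LL_1}(n)\leq\mu_{\LL_2}(n)$ since the largest $\LL_2$-free set is then at least as large as the largest $\LL_1$-free set. Equivalently, and more convenient to argue, I would show the contrapositive: if a set $A\subseteq[n]$ contains a non-trivial solution to $\LL_2$, then $A$ contains a non-trivial solution to $\LL_1$. So suppose $(x_1,x_2,\dots,x_{k-1})$ is a non-trivial solution to $\LL_2$, i.e. $(p_1+p_2)x_1+p_3x_2+\dots+p_kx_{k-1}=b$ with all $x_i\in A$.

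The natural candidate for a solution to $\LL_1$ is to ``split'' the first coordinate: set $x_1'=x_1$, $x_2'=x_1$, and $x_{i}'=x_{i-1}$ for $3\leq i\leq k$. Then $p_1x_1'+p_2x_2'+p_3x_3'+\dots+p_kx_k'=(p_1+p_2)x_1+p_3x_2+\dots+p_kx_{k-1}=b$, so $(x_1',\dots,x_k')$ is indeed a solution to $\LL_1$ with all entries in $A$. The only thing left to check is that this solution is \emph{non-trivial} in the sense defined in the introduction, since $A$ being $\LL_1$-free only forbids non-trivial solutions.

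The main obstacle is precisely this triviality bookkeeping: one must rule out the possibility that $(x_1',\dots,x_k')$ is trivial even though $(x_1,\dots,x_{k-1})$ was not. Here I would argue as follows. Suppose for contradiction $(x_1',\dots,x_k')$ is trivial, witnessed by a partition $P_1,\dots,P_\ell$ of $[k]$ with $x_i'=x_j'$ whenever $i,j$ lie in a common class and $\sum_{i\in P_r}p_i=0$ for each $r$. Let $P_r$ be the class containing the index $2$ (the ``new'' copy). Since $x_1'=x_2'$ always, merging indices $1$ and $2$ is harmless; one then translates this partition of $[k]$ back to a partition of $[k-1]$ for $\LL_2$: if $1$ and $2$ lie in the same class $P_r$, replace $\{1,2\}$ by a single index $1$ with coefficient $p_1+p_2$ and relabel indices $3,\dots,k$ as $2,\dots,k-1$, obtaining a partition of $[k-1]$ whose blocks have coefficient-sums (for $\LL_2$) equal to the corresponding blocks of the $\LL_1$-partition (using $p_1+p_2$ for the merged block), hence all zero; and the equality conditions transfer since $x_i'$-values agree. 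If instead $1$ and $2$ lie in \emph{different} classes, then $\sum_{i\in P_r}p_i=0$ for the block $P_r\ni 2$ forces, together with the analogous statement for the block containing $1$, a partition of $[k-1]$ witnessing triviality of $(x_1,\dots,x_{k-1})$ for $\LL_2$ after the relabelling — one merges the two blocks containing $1$ and $2$ into one block with coefficient sum $(p_1+\text{rest})+(p_2+\text{rest})=0+0=0$ and notes all values in it equal $x_1$. Either way we contradict the non-triviality of the original $\LL_2$-solution. This completes the argument; the only genuinely fiddly part is writing the index relabelling cleanly, and I would present it as a short explicit bijection between partitions rather than belabouring cases.
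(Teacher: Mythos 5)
Your proof is correct and takes the same route as the paper: duplicate $x_1$ to turn a non-trivial $\LL_2$-solution $(x_1,\dots,x_{k-1})$ into a non-trivial $\LL_1$-solution $(x_1,x_1,x_2,\dots,x_{k-1})$, so every $\LL_1$-free set is $\LL_2$-free and hence $\mu_{\LL_1}(n)\leq\mu_{\LL_2}(n)$. The paper states this in one line ("any solution to $\LL_2$ gives rise to a solution to $\LL_1$") and leaves the non-triviality check implicit; your case analysis on whether indices $1$ and $2$ lie in the same partition class supplies exactly that omitted verification.
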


The proposition is just a simple consequence of the observation that any solution to the equation $\LL_2$ gives rise to a solution to the equation $\LL_1$. So all $\LL_1$-free subsets of $[n]$ are also $\LL_2$-free. Note that for the equations $\LL$ which satisfy the hypothesis of the following corollary, the interval $[\floor{n/(p+q)}+1,n]$ is $\LL$-free. Hence by applying the above proposition along with Corollary~\ref{cormain}, we attain the following result.

\begin{cor}\label{Mu5ai}
Let $\LL$ denote the equation $a_1 x_1 + \cdots + a_k x_k + b_1 y_1 + \cdots + b_{\ell} y_{\ell} = c_1 z_1 + \cdots + c_m z_m$ where the $a_i,b_i,c_i \in \mathbb{N}$ and $p':=\sum_i a_i$, $q':=\sum_i b_i$ and $r':=\sum_i c_i$. Let $t':=\gcd(p',q',r')$ and write $p:=p'/t'$, $q:=q'/t'$ and $r:=r'/t'$. Suppose that $r=1$. Then for sufficiently large $n$, we have $\mu_{\LL}(n)=n-\floor{n/(p+q)}$.
\end{cor}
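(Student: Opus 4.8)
The plan is to bound $\mu_{\LL}(n)$ from below by $n-\floor{n/(p+q)}$ via an explicit $\LL$-free set, to bound it from above by $\mu_{\LL'}(n)$ for the three-variable equation $\LL'\colon p'x+q'y=r'z$ using Proposition~\ref{Mu1}, and then to observe that $\mu_{\LL'}(n)=\mu_{px+qy=z}(n)$, so that Corollary~\ref{cormain} finishes the job.

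For the lower bound, I would show that the interval $I:=[\floor{n/(p+q)}+1,n]$ contains no solution to $\LL$ whatsoever (in particular, no non-trivial one). Indeed, if every variable of a solution lay in $I$, then the left-hand side would satisfy
$$\sum_i a_i x_i+\sum_j b_j y_j\geq(p'+q')\big(\floor{n/(p+q)}+1\big)=t'(p+q)\big(\floor{n/(p+q)}+1\big)>t'n,$$
while the right-hand side would satisfy $\sum_k c_k z_k\leq(\textstyle\sum_k c_k)n=r'n=t'n$ since $r=1$. This contradiction shows $I$ is $\LL$-free, and as $|I|=n-\floor{n/(p+q)}$ we obtain $\mu_{\LL}(n)\geq n-\floor{n/(p+q)}$.

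For the upper bound, write $\LL$ as $\sum_i a_i x_i+\sum_j b_j y_j+\sum_k(-c_k)z_k=0$ and apply Proposition~\ref{Mu1} a total of $k+\ell+m-3$ times (reordering variables as needed, which does not affect $\mu_{\LL}(n)$), each time identifying two variables from the same group and replacing their two coefficients by their sum. This merges all the $x_i$, all the $y_j$ and all the $z_k$, producing the equation $\LL'\colon p'x+q'y=r'z$ with $\mu_{\LL}(n)\leq\mu_{\LL'}(n)$. Now $t'=\gcd(p',q',r')$ divides every coefficient of $\LL'$, so dividing $\LL'$ through by $t'$ is legitimate and, since $r=1$, yields exactly $px+qy=z$; this equation has the same solution set as $\LL'$, and since neither equation is translation-invariant one checks that neither has any trivial solutions, so the two have the same $\LL$-free subsets of $[n]$, whence $\mu_{\LL'}(n)=\mu_{px+qy=z}(n)$. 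If $\max(p,q)\geq 2$, then swapping $p$ and $q$ if necessary so that $p\geq q$, Corollary~\ref{cormain} gives $\mu_{px+qy=z}(n)=n-\floor{n/(p+q)}$ for all large $n$; in the remaining case $p=q=1$ the equation is $x+y=z$, for which $\mu(n)=\ceil{n/2}=n-\floor{n/2}$. Combining this with the lower bound proves the corollary.

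The argument is largely routine; I expect the only point needing care to be the order of operations. One cannot simply divide the original equation $\LL$ by $t'$, because its individual coefficients $a_i,b_j,c_k$ need not be multiples of $t'$ (only their group-sums $p',q',r'$ are). The reduction to a three-variable equation via Proposition~\ref{Mu1} must therefore be carried out \emph{before} clearing the common factor $t'$.
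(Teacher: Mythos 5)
Your proof is correct and follows essentially the same route as the paper: lower bound via the explicit interval $[\floor{n/(p+q)}+1,n]$, upper bound via repeated application of Proposition~\ref{Mu1} to reduce to a three-variable equation, then Corollary~\ref{cormain}. Two things you make explicit that the paper glosses over are worth noting: (a) the observation that one must merge variables first and only afterwards divide by $t'$, since the individual coefficients of $\LL$ need not be divisible by $t'$; and (b) the edge case $p=q=1$, which falls outside the hypotheses of Corollary~\ref{cormain} (that result needs $p\geq 2$) but is handled by the classical sum-free bound $\mu(n)=\lceil n/2\rceil$. The paper's proof sketch cites only Corollary~\ref{cormain} and thus is, strictly speaking, incomplete for that degenerate case; your treatment is more careful on this point.
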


One can define a link hypergraph $L_S[B]$ analogous to the notion of a link graph defined in Section~\ref{secli} (i.e. now hyperedges correspond to solutions to $\LL$ involving at least one element of $S$). We remark that the removal and container lemmas of Green~\cite{G-R} that we applied do hold for homogeneous linear equations on more than three variables. By arguing as in Lemma~\ref{MSF3} (but by considering a link hypergraph), one can obtain the following simple result.
\begin{prop}\label{MSF8}
Let $\LL$ denote the equation $p_1 x_1 + \dots + p_s x_s = rz$ where $p_1\geq p_2 \geq \dots \geq p_s > r \geq 1$ are positive integers. Then $f_{\max}(n,\LL) \leq f(\floor{rn/p_s},\LL)$.
\end{prop}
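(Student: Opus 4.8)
The plan is to mimic the proof of Lemma~\ref{MSF3}, but working with a link hypergraph rather than a link graph. First I would set $C:=[\floor{rn/p_s}]$ and $B:=[\floor{rn/p_s}+1,n]$. The key point is that $B$ is $\LL$-free: if $\{x_1,\dots,x_s,z\}$ is an $\LL$-triple (in the obvious sense for more variables) in $[n]$ with $p_1 x_1 + \dots + p_s x_s = rz \leq rn$, then since all $p_i \geq p_s$ and all $x_i \geq 1$ we get $p_s x_i \leq p_1 x_1 + \dots + p_s x_s \leq rn$ for each $i$, hence $x_i \leq rn/p_s$, so every $x_i$ lies in $C$. Thus any solution to $\LL$ in $[n]$ uses at most one element of $B$ (namely possibly the $z$-variable), and in fact at most one element outside $C$.

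Next I would observe that every maximal $\LL$-free subset of $[n]$ arises by choosing an $\LL$-free set $S \subseteq C$ and then extending it within $B$ to a maximal $\LL$-free set. The extension step is controlled by the link hypergraph $L_S[B]$: by the analogue of Lemma~\ref{L4} for link hypergraphs, the number of extensions of a fixed $S$ is at most the number of maximal independent sets of $L_S[B]$. But since no solution to $\LL$ in $[n]$ contains two elements of $B$, the hypergraph $L_S[B]$ has no edges of size $\geq 2$ — it consists only of isolated vertices and loops (a loop at $b \in B$ recording that $b$ together with elements of $S$ forms a solution). Such a hypergraph has exactly one maximal independent set, namely the set of non-looped vertices. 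Hence each $\LL$-free $S \subseteq C$ extends to exactly one maximal $\LL$-free subset of $[n]$, and therefore $f_{\max}(n,\LL) \leq f(\floor{rn/p_s},\LL)$, as claimed.

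The only genuine content beyond the three-variable case is verifying that the link hypergraph really is degenerate, i.e. that no $\LL$-solution in $[n]$ meets $B$ in two or more elements; this is exactly the inequality $x_i \leq rn/p_s$ derived above, together with the fact that $z \leq n$ forces nothing new. I expect the main (minor) obstacle to be stating the hypergraph versions of the link-graph machinery cleanly — in particular formulating Lemma~\ref{L4} for a link hypergraph, where a ``loop'' now records a solution using one vertex of $B$ and several vertices of $S$, and checking that the one-maximal-independent-set argument still goes through. Since the paper already remarks that these container/removal-type tools of Green hold for homogeneous equations in more than three variables and that one argues ``as in Lemma~\ref{MSF3}'', this is routine, and no further estimates are needed.
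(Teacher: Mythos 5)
Your proof is correct and follows exactly the route the paper indicates (the paper's only "proof" of Proposition~\ref{MSF8} is the remark preceding it, which says to argue as in Lemma~\ref{MSF3} with a link hypergraph). Your verification that every solution in $[n]$ has all $x_i \leq \lfloor rn/p_s\rfloor$ (so at most one value from $B$, namely possibly $z$), and hence that $L_S[B]$ consists only of isolated vertices and loops with a unique maximal independent set, is precisely the content needed.
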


In~\cite{HT2} we obtain further results concerning the number of maximal solution-free sets for linear equations with more than three variables. However the proof method does not use structural results such as Theorem~\ref{L5}, and  only work for \emph{some} linear equations. %Similar structural results for the number of maximal independent sets in (non-uniform) hypergraphs would help to attain upper bounds for the number of maximal solution-free sets for $\emph{all}$ homogeneous linear equations. 
Obtaining similar structural results for the number of maximal independent sets in (non-uniform) hypergraphs would help to attain (general) upper bounds for the number of maximal solution-free sets.

%\begin{proof}
%Let $M_i:=\floor{(rn-p_1-\cdots-p_{i-1}-p_{i+1}-\cdots-p_s)/p_i}$ for each $1 \leq i \leq s$. Since $z \leq n$ we have $x_i \leq M_i$ for each $i$. Let $C:=[M_s]$ and let $B:=[M_s+1,n]$. Notice that every maximal $\LL$-free subset of $[n]$ can be found by selecting an $\LL$-free subset $S \subseteq C$ and then extending it in $B$ to a maximal one. Suppose we have such an $\LL$-free subset $S$. In how many ways can we extend it in $B$? We can bound this by the number of maximal independent sets in the link (HYPER)graph $L_S[B]$ of $S$ on $B$ since $S$ and $B$ are both $\LL$-free subsets of $[n]$ (so can apply (HYPERGRAPH version of) Lemma~\ref{L4}). 

%In any solution $\{x_1,\cdots,x_s,z\}$ contained in $[n]$ satisfying $p_1 x_1 + \cdots + p_s x_s = rz$, since $z \leq n$, recall that $x_i \leq M_i\leq M_s$. Hence $x_1, \cdots, x_s \in C$. This means that there are no $\LL$-triples in $[n]$ which contain more than one element from $B$. Thus the link (HYPER)graph $L_S[B]$ contains no edges (although it may have loops) and so it has precisely one maximal independent set. Hence the number of maximal $\LL$-free subsets of $[n]$ is bounded by the number of choices of $S$ in $C$ which are $\LL$-free, in other words the number of $\LL$-free subsets of $C$. 
%\end{proof}

%INCLUDE $\geq 4$ variable extension using easy lemma. Change conclusion... mention new results?

\section*{Acknowledgments}
The second author is supported by EPSRC grant EP/M016641/1.
The authors are grateful to Allan Lo for comments that lead to a simplification in the calculations in the proof of Theorem~\ref{structure}.
We are also grateful to Deryk Osthus for comments on the manuscript, and to the referees for careful and helpful reviews.

{\footnotesize \obeylines \parindent=0pt

Robert Hancock, Andrew Treglown
School of Mathematics
University of Birmingham
Edgbaston
Birmingham
B15 2TT
UK
}
\begin{flushleft}
{\emph{E-mail addresses}:
\tt{\{rah410,a.c.treglown\}@bham.ac.uk}}
\end{flushleft}

  \begin{appendix}

\section{}
In this appendix we give the proof of Claim~\ref{claimA} and Proposition~\ref{best}.
\subsection{Proof of Claim~\ref{claimA}}

We use induction on $k$. Recall that $p \geq q \geq 2$. For the base case $k=6$ we directly calculate (\ref{n2}). First note that 

\begin{align*}
& \frac{q^2+1}{q^2+q+1}-\frac{q^7-q^6+q^5-q^4+q^3-q^2+q-1+p(q^6+q^4+q^2+1)} {q^7+p(q^6+q^5+q^4+q^3+q^2+q+1)} \\
= & \frac{(q^6+(p-1)q^5+q^4+(p-1)q^3+q^2+1)}{(q^2+q+1)(q^7+p(q^6+q^5+q^4+q^3+q^2+q+1))}, \\
\end{align*}

and so we have 
\begin{align*}
& \Big(\frac{q^2+1}{q^2+q+1}-C_6\Big)\Big(q^6+p(q^5+q^4+q^3+q^2+q+1) \Big) \\
= & \frac{(q^6+(p-1)q^5+q^4+(p-1)q^3+q^2+1)(q^6+p(q^5+q^4+q^3+q^2+q+1)} {(q^2+q+1)(q^7+p(q^6+q^5+q^4+q^3+q^2+q+1))}. \\
\end{align*}

Since $p \geq q \geq 2$ every power of $q$ in the numerator has a coefficient of at least 1 in both expressions, hence the numerator as a single polynomial in $q$ has positive coefficients. Hence we can make our fraction smaller by dropping lower powers of $q$. We then make further use of $p \geq q \geq 2$ to get the desired result:

\begin{align*}
& \frac{(q^6+(p-1)q^5+q^4+(p-1)q^3+q^2+1)(q^6+p(q^5+q^4+q^3+q^2+q+1)} {(q^2+q+1)(q^7+p(q^6+q^5+q^4+q^3+q^2+q+1))} \\
\geq & \frac{q^{12}+(2p-1)q^{11}+(p^2+1)q^{10}+(p^2+2p-1)q^9}{(q^2+q+1)(q^7+p(q^6+q^5+q^4+q^3+q^2+q+1))} \\
\geq & \frac{q^{12}+(2p-1)q^{11}+(p^2+1)q^{10}+(p^2+2p-1)q^9}{(p+1)q^{10}} \\
= & \frac{q^2+(2p-1)q+(p^2+1)}{p+1}+\frac{p^2+2p-1}{(p+1)q} \geq \frac{p^2+4p+3}{p+1}+\frac{p^2+p}{(p+1)q} = p+3+p/q \geq 6 =k. \\
\end{align*}

For the inductive step, assume that (\ref{n2}) holds for $k$. It suffices to show that $C_k \geq C_{k+1}$ as then the result holds for $k+1$:

\begin{align*}
& \Big(\frac{q^2+1}{q^2+q+1}-C_{k+1}\Big) \Big( q^{k+1}+p\sum\limits_{i=0}^{k} q^i\Big) \geq \Big(\frac{q^2+1}{q^2+q+1}-C_{k}\Big) \Big( q^{k+1}+p\sum\limits_{i=0}^{k} q^i\Big) \\
\geq & \, q \Big(\frac{q^2+1}{q^2+q+1}-C_{k}\Big) \Big( q^{k}+p\sum\limits_{i=0}^{k-1} q^i\Big) \geq qk \geq k+1. \\
\end{align*}

For $k$ even, we have $C_k=C_{k+1}$ by definition. For $k$ odd, consider the following calculations: 

\begin{enumerate}[(i)]
\item{$D_1:=q^{k+2} \Big( \sum\limits_{i=0}^{k} (-1)(-q)^i \Big) - q^k \Big( \sum\limits_{i=0}^{k+2} (-1)(-q)^i \Big)=-q^{k+1}+q^k$,}
\item{$D_2:=pq^{k+2} \Big( \sum\limits_{i=0}^{(k-1)/2} q^{2i} \Big) - pq^k \Big( \sum\limits_{i=0}^{(k+1)/2} q^{2i} \Big)=-pq^k$,}
\item{$D_3:=p \Big( \sum\limits_{i=0}^{k+1} q^i \Big) \Big( \sum\limits_{i=0}^{k} (-1)(-q)^i \Big) - p\Big( \sum\limits_{i=0}^{k-1} q^i \Big) \Big( \sum\limits_{i=0}^{k+2} (-1)(-q)^i \Big)=pq^{k+1}-pq^k$,}
\item{$D_4:=p^2 \Big( \sum\limits_{i=0}^{k+1} q^i \Big) \Big( \sum\limits_{i=0}^{(k-1)/2} q^{2i} \Big) -p^2 \Big( \sum\limits_{i=0}^{k-1} q^i \Big) \Big( \sum\limits_{i=0}^{(k+1)/2} q^{2i} \Big) = p^2 q^k$.}
\end{enumerate}

Using these we have

\begin{align*}
C_k-C_{k+1} = & \frac{ \Big( \sum\limits_{i=0}^{k} (-1)(-q)^i \Big) + p \Big( \sum\limits_{i=0}^{(k-1)/2} q^{2i} \Big)}{q^k+p \Big( \sum\limits_{i=0}^{k-1} q^i \Big)} - \frac{ \Big( \sum\limits_{i=0}^{k+2} (-1)(-q)^i \Big) + p \Big( \sum\limits_{i=0}^{(k+1)/2} q^{2i} \Big)}{q^{k+2}+p \Big( \sum\limits_{i=0}^{k+1} q^i \Big)} \\
= & \frac{D_1+D_2+D_3+D_4}{\Big(q^{k}+p \Big( \sum\limits_{i=0}^{k-1} q^i \Big)\Big)\Big(q^{k+2}+p \Big( \sum\limits_{i=0}^{k+1} q^i \Big)\Big)} \\
= & \frac{(p-1)q^{k+1}+(p^2-2p+1)q^k}{\Big(q^{k}+p \Big( \sum\limits_{i=0}^{k-1} q^i \Big)\Big)\Big(q^{k+2}+p \Big( \sum\limits_{i=0}^{k+1} q^i \Big)\Big)} \geq 0, \\
\end{align*}
where the last inequality follows since $p,q \geq 2$.
\endproof

%%%%%%%%%%%

\subsection{Proof of Proposition~\ref{best}}
Suppose that $\gcd(p,q)=q$. To prove (ii) it suffices to show that 
$$\mu _\LL (n) -\mu ^* _\LL (n) \leq 2 \mu _\LL (\floor{ (n-p)/q}) +o(n).$$
Since $\mu _\LL (n)=(p+q-1)n/(p+q) +o(n)$, $\mu _\LL (\floor{ (n-p)/q})=(p+q-1)n/q(p+q) +o(n)$ and $\mu ^* _\LL (n) =(q-1)^2n/q^2  +o(n)$, it is easy to check that this inequality holds.

To prove (iii) in the case where $t:=\gcd(p,q) \neq q$, it certainly suffices to show that $ 2 \mu _\LL (\floor{ (n-p)/q}) \leq \mu _\LL (n) -\mu ^* _\LL (n)+o(n)$. In this case we have $\mu ^* _\LL (n) =(q-1)(t-1)/(qt)  +o(n)$, and hence it suffices to show that $t \leq (pq+q^2-p-q)/(p+2q-2)$. First note that $t \leq q/2$ and so $q \neq 1$. Now observe that $t(p+2q-2) \leq q(p+2q-2)/2 =pq/2+q^2-q \leq pq+q^2-p-q$ and so our inequality on $t$ holds as required.

To prove (iii) in the case where $\gcd(p,q)=q$ and $p \geq q^2$, it suffices to  show that $$2^{\frac{(p+q-1)n}{(p+q)q}} \leq 3^{\frac{(p+q-1)n}{3(p+q)}-\frac{(q-1)^2 n}{3q^2}}.$$
Let $a:=\log_3(8)$. The inequality can be rearranged to give
$$p((2-a)q-1) \geq (a-1)(q^2-q).$$
If $q \geq 10$ then $((2-a)q-1)$ is positive and so we require $p \geq (a-1)(q^2-q)/((2-a)q-1)$. Note that for $q \geq 18$ this always holds since $p\geq q^2 \geq (a-1)(q^2-q)/((2-a)q-1)$.

To prove (i), suppose that $\gcd(p,q)=q$. It suffices to show that $$ 3^{\frac{(p+q-1)n}{3(p+q)}-\frac{(q-1)^2 n}{3q^2}} \leq 2^{\frac{(p+q-1)n}{(p+q)q}},$$
or rearranging $$p((2-a)q-1) \leq (a-1)(q^2-q).$$
If $q \leq 9$ then $((2-a)q-1)$ is negative and so the inequality holds as the right hand side is non-negative. If $10 \leq q \leq 17$ then the inequality holds if $p \leq (a-1)(q^2-q)/((2-a)q-1)$.
\endproof
\end{appendix}

\end{document}